\numberwithin{equation}{section}
\DeclareMathOperator{\ddiv}{div}
\newcommand\reallywidehat[1]{%
\savestack{\tmpbox}{\stretchto{%
  \scaleto{%
    \scalerel*[\widthof{\ensuremath{#1}}]{\kern-.6pt\bigwedge\kern-.6pt}%
    {\rule[-\textheight/2]{1ex}{\textheight}}
  }{\textheight}%
}{0.5ex}}%
\stackon[1pt]{#1}{\tmpbox}%
}
\newcommand{\ed}{\ensuremath{\mathrm{d}}} 
\newcommand{\mc}[1]{\ensuremath{\mathcal{#1}}}
\renewcommand{\bf}[1]{\ensuremath{\mathbf{#1}}}
\renewcommand{\sf}[1]{\ensuremath{\mathsf{#1}}}
\newcommand{\mr}[1]{\ensuremath{\mathrm{#1}}}
\newcommand{\mf}[1]{\ensuremath{\mathfrak{#1}}}
\newcommand{\parder}[2]{\ensuremath{\frac{\partial #1}{\partial #2}}}
\newcommand{\der}[2]{\ensuremath{\frac{\ed #1}{\ed #2}}}
\newcommand{\wh}[0]{ \accentset{\circ}{W}_{h}}
\newcommand{\Wh}[0]{ \accentset{\circ}{\bf W}_{h}}
\let\emptyset\varnothing
\newtheorem{theorem}{Theorem}[section]
\newtheorem{remark}[theorem]{Remark}
\newtheorem{definition}[theorem]{Definition}
\newtheorem{proposition}[theorem]{Proposition}
\newtheorem{lemma}[theorem]{Lemma}
\title{A variational ${\bm H}(\ddiv)$ finite element discretisation approach for perfect incompressible fluids}
\author{Andrea Natale and Colin J Cotter}
\affil{Department of Mathematics, Imperial College London, London, SW7 2AZ, UK}
\date{\today}
\begin{document}

\maketitle

\begin{abstract} 
We propose a finite element discretisation approach for the
incompressible Euler equations which mimics their geometric structure
and their variational derivation. In particular, we derive a finite
element method that arises from a nonholonomic variational principle
and an appropriately defined Lagrangian, where finite element
${\bm H}(\ddiv)$ vector fields are identified with advection operators; 
this is the first successful extension of the structure-preserving discretisation of Pavlov et al.\ (2009) to the finite element setting.
The resulting algorithm coincides with the energy-conserving scheme presented in Guzm\'an et al.\ (2016). Through the variational derivation, we discover that it also satisfies a discrete analogous of Kelvin's circulation theorem. 
Further, we propose an upwind-stabilised version of the scheme which
dissipates enstrophy whilst preserving energy
conservation and the discrete Kelvin's theorem. We prove error estimates for this version of the scheme, and we study its behaviour through numerical tests. 
\end{abstract}

\section{Introduction}

The equations of motion for perfect fluids with constant density, namely the incompressible Euler equations, possess a rich geometric structure which is directly responsible for their many properties. More precisely, one can derive these equations from a variational principle using geometric techniques that also apply to classical mechanical systems such as the rigid body, for example.
This geometric view of perfect fluids is due to Arnol'd \cite{Arnold66}. It has been extended to a wide variety of fluid models including  Magnetohydrodynamics (MHD) and Geophysical Fluid Dynamics (GFD) models through the incorporation of advected quantities \cite{Holm98}. 
In this paper, we derive finite element numerical methods for perfect incompressible fluids, aiming to preserve as much of this geometric structure as possible. 
We pursue this objective for two main reasons. On the one hand, by addressing structure-preservation in numerical simulations, we expect to preserve features of the solution that are usually lost by standard discretisation approaches. On the other hand, preserving the structure of the equation at the discrete level enables us to devise discretisations with similar properties for different fluid models in a unified and systematic way. 

The idea of structure preservation in numerical simulations of the Euler equations is not new. 
Probably the most successful approach in this context in terms of number of conservation laws is the sine truncation due to Zeitlin \cite{Zeitlin}, further developed in the form of a geometric integrator in \cite{McL93} by McLachlan. This uses a modified spectral truncation of the original system, yielding a Lie-Poisson structure of the equations at the discrete level. Such a truncation however is only well-defined in the very specific case of a two-dimensional periodic domain and generalisations are far from trivial. 

One of the most recent approaches is the one associated to the school of Brenier \cite{brenier89,brenier00}, see, e.g., the discretisations proposed in \cite{gallouet16,merigot16}. Such methods are based on the reformulation of the variational principle underlying the incompressible Euler equations into an optimal transport problem. This leads to the definition of the notion of generalised incompressible flow, in which the motion of the fluid particles is defined in a probabilistic sense. The methods in \cite{gallouet16,merigot16} aim at producing approximations of the flow map in this setting. In particular, they construct minimizing geodesics between discrete measure preserving maps transporting the particle mass distributions. Moreover, they establish convergence to classical solutions of the Euler equations.  

In this paper we follow a different route, which is inspired by the work of Pavlov et al.\  \cite{Pavlov,Pavlov1}. In this case, the discrete flow maps are discretised by considering their action on piecewise constant scalar functions intended as right composition.
However, such maps are not constructed explicitly and the final algorithm is fully Eulerian, i.e.\ one approximates the evolution of the velocity field at fixed positions in space, without following each fluid particle. 
One key ingredient to achieve this is the identification of vector fields with discrete Lie derivatives, i.e.\ advection operators, also acting on piecewise constant scalar functions. 
Owing to this identification, one can reduce the variational principle governing the dynamics from the space of discrete flow maps to the one of discrete velocity fields, i.e.\ from a Lagrangian (material) description to an Eulerian (spatial) one. The final algorithm can be interpreted as an advection problem for the velocity one-form by means of another discrete Lie derivative operator, defined consistently with the Discrete Exterior Calculus (DEC) formalism \cite{Desbrun03}. Desbrun et al.\ \cite{Desbrun14} extended this promising idea to generate a variational numerical scheme for rotating stratified fluid models. Both the algorithms presented in \cite{Pavlov1} and \cite{Desbrun14} can be classified as low order finite difference methods and require a dual grid construction to generate an appropriate discrete Lagrangian. 

We will be concerned with generalising the approach presented in \cite{Pavlov1} using finite element methods, with the aim of producing higher order variational schemes that require a single  computational grid. Since we are interested in discretising incompressible fluids, we need finite element vector spaces that encode the divergence-free constraint in their construction. This problem is generally solved by means of mixed finite element methods \cite{Boffi13} that effectively enable us to build such spaces as subspaces of $\bm{H}(\mr{div})$ (the Hilbert space of square-integrable vector fields with square-integrable divergence on a given domain). 

The mathematical structure of mixed problems is particularly clear when formulating them in terms of differential forms, as Arnold et al.\ \cite{Arnold06,Arnold10} did by introducing the Finite Element Exterior Calculus (FEEC) framework. In FEEC, many different mixed finite element spaces can be seen as specific instances of polynomial differential form spaces. One can analyse different discretisations at once; most importantly, stability can be addressed in a systematic way independently of the domain topology. 

From our perspective using differential forms in the discretisation will be useful to have a single definition for a discrete Lie derivative both when acting on scalar functions and differential one-forms (which can be identified with vector fields). 
Mullen et al.\ \cite{Mullen} defined a numerical approach in this direction in the context of DEC using semi-Lagrangian techniques. Heumann et al.\ \cite{Heumann,Heumann11} developed similar techniques formulated as Galerkin methods using the differential form spaces arising from FEEC. In the same work, they defined Eulerian-type discretisations including upwind stabilisation, which they applied to linear advection/advection-diffusion problems with Lipschitz-continuous advecting velocity (see also \cite{Heumann13}, for an application to the magnetic advection problem). Recently, these techniques were further developed to allow for piecewise Lipschitz-continuous advecting velocities  \cite{Heumann16}.

Our strategy will be to adopt this latter approach and apply it to the case where the vector field generating the Lie derivative is in ${\bm H}(\ddiv)$ finite element spaces. In this way, we will be able to reproduce the structure of the variational algorithm in the work of Pavlov et al. \cite{Pavlov1}, transplanting the main features to the finite element setting.
This will lead us to rediscover the centred flux discretisation described in \cite{Guzman16}, and inspired by the discretisation for the Navier-Stokes equations discussed in \cite{Cockburn05}.  Our derivation shows that such a discretisation, besides conserving kinetic energy, also inherits a discrete version of Kelvin's circulation theorem. As a matter of fact, variational formulations lead to conservation laws resulting from symmetries in Hamilton's principle, as a consequence of Noether's theorem. Unfortunately, just as in \cite{Pavlov1}, this approach does not lead to conservation of enstrophy. In turn, this yields suboptimal convergence rates as observed in \cite{Guzman16}, since a priori control of enstrophy is required for stability of smooth solutions.  

In order to overcome these issues we introduce an upwind-stabilised version of the algorithm, which is different from the one studied in \cite{Guzman16}. More specifically, we construct the method in such a way that it still possesses the same properties as the centred flux discretisation, even though it cannot be derived from first principles. We prove sub-optimal order $s$ convergence when polynomial spaces of order $s$ are used, for $s>1+n/2$, where $n$ is the domain dimension. However, numerical results suggest that, at least in two dimensions, the scheme converges optimally with rate $s+1$ for $s\geq 1$.  





This paper is organised as follows. In Section~\ref{sec:pre} we introduce the notions of one-forms and Lie derivatives, and we use these to review the variational structure underlying the incompressible Euler equations. In Section~\ref{sec:fevari} we describe our discretisation approach. In particular, we derive the central and upwind discretisations and we describe their properties.
In Section~\ref{sec:conv} we present a convergence proof for the upwind scheme, using as starting point the error estimate for the centred scheme derived in \cite{Guzman16}. We provide some numerical tests in Section~\ref{sec:num}. Finally, in Section~\ref{sec:conc} we conclude by describing further possible developments of our approach.

\section{Preliminaries}\label{sec:pre}
This section provides the basic tools necessary to formulate our numerical approach. We proceed in two steps. First, in Section \ref{sec:one-forms}, we introduce notation for one-forms and Lie derivatives. Then, in Section \ref{sec:models}, we review the continuous geometric formulation of perfect incompressible fluids and show how the equations of motion can be formulated as an advection problem for the momentum one-form. We refer to \cite{Abraham88,Marsden10} for more details on these topics.

\subsection{One-forms and Lie derivatives}
\label{sec:one-forms}
Let $V$ be a vector space. A one-form on $V$ is an element of $V^*$ the dual of $V$, i.e.\ a linear functional $a:V\rightarrow \mathbb{R}$. For any ${\bf v}\in V$, we denote by $a(\bf v)\in \mathbb{R}$ the pairing of $a$ with $\bf v$. Consider now an $m$-dimensional manifold $\Omega\subset \mathbb{R}^n$. At each point $p\in \Omega$, we have a vector space $T_p\Omega$, namely the tangent space, whose elements are the tangent vectors to $\Omega$ at $p$. The union of such spaces is the tangent bundle of $\Omega$, $T\Omega \coloneqq \bigcup_{p\in \Omega} T_p\Omega$. A vector field on $\Omega$ is a smooth assignment ${\bf v}: \Omega \rightarrow T\Omega$ such that ${\bf v}_p$ is tangent to $\Omega$ at $p$, i.e.\ ${\bf v}_p \in T_p\Omega$.  Differential one-forms generalise the concept of one-form as dual elements of vector fields on $\Omega$. Specifically, we define the cotangent space $T^*_p\Omega$ to be the dual of $T_p\Omega$, and introduce the cotangent bundle $T^*\Omega \coloneqq \bigcup_{p\in \Omega} T^*_p\Omega$. Then, a differential one-form is an assignment $a:\Omega \rightarrow T^*\Omega$ such that $p \in \Omega \rightarrow a_p\in T^*_p\Omega$. For any smooth vector field $\bf v$, the pairing $a({\bf v})$ is the real-valued function $a({\bf v}): p \in \Omega \rightarrow a_p({\bf v}_p)\in \mathbb{R}$. Then, we define the total pairing of a differential one-form $a$ with a vector field $\bf v$ to be
\begin{equation}
\langle a, {\bf v} \rangle \coloneqq \int_{\Omega} a( \bf v) \, \mr{vol}\, ,
\end{equation} 
with $\mr{vol}$ being the measure on $\Omega$ induced by the Euclidean metric. 

In this paper we restrict ourself to the special case when $\Omega$ is an $n$-dimensional domain in $\mathbb{R}^n$, with $n=2,3$, and admits global Cartesian coordinates $\{x_i\}_{i=1}^n$ oriented accordingly to an orthonormal basis $\{{\bf e}_i\}_{i=1}^n$ of $\mathbb{R}^n$. Then, a vector field $\bf v$ on $\Omega$ has the coordinate representation ${\bf v} = \sum_{i=1}^n v_i {\bf e}_i$, where $v_i$ are real-valued functions on $\Omega$. Note that ${\bf e}_i$, for $i=1,\ldots,n$, is interpreted as a constant vector field on $\Omega$. Now, let $\{\ed x_i\}_{i=1}^n$ be the dual basis to $\{{\bf e}_i\}_{i=1}^n$, i.e.\ $\ed x_i({\bf e}_j) =\delta_{ij}$. Then, a differential one-form $a$ has the coordinate representation $a = \sum_{i=1}^n a_i \ed x_i$, where $a_i$ are real-valued functions on $\Omega$, and $a({\bf v}) = \sum_{i=1}^n a_i v_i$. Note that ${\ed x}_i$, for $i=1,\ldots,n$, is interpreted as a constant differential one-form on $\Omega$. In this simple setting, we have a trivial identification of a differential one-form $a$ with a vector field $\bf a$ by the equation 
\begin{equation}
a = {\bf a}\cdot \ed {\bf x}  \, ,
\end{equation}
where the right-hand side represents the formal inner product between the vector ${\bf a} = (a_1,\ldots,a_n)$ and $\ed {\bf x} = (\ed x_1, \ldots, \ed x_n)$.
Let $\Gamma\subset \Omega$ be a smooth curve, parameterised by the functions $x_i = x_i(s)$, for $i=1,\ldots,n$ and $s\in I$, where $I$ is an open subset of $\mathbb{R}$ with $0\in I$. The quantity ${\bf s} = \sum_{i=1}^n \ed x_i/\ed s|_{s=0} \,{\bf e}_i$ is a vector field on $\Gamma$, since for each $p\in \Gamma$, ${\bf s}_p\in T_p\Gamma$. Then, the integral of a differential one-form on $\Gamma$ is defined by
\begin{equation}
\int_\Gamma a \coloneqq \int_{I} a({\bf s})\, \ed s\, ,
\end{equation} 
which is independent of the parameterisation of $\Gamma$.
A smooth vector field $\bf v$ on $\Omega$ can be identified with a one-parameter family of local diffeomorphisms  $\varphi: I \times U \rightarrow \Omega$, with $U$ being an open set of $\Omega$ and $I\subset \mathbb{R}$ an open set around $0\in \mathbb{R}$, such that, for all $t\in I$, $\varphi_t$ is a local diffeomorphism defined by
\begin{equation}
\parder{\varphi_t}{t} = {\bf v} \circ \varphi_t\,,
\end{equation}  
and $\varphi_0 = e$ the identity map on $\Omega$. The map $\varphi$ defines the flow of the vector field $\bf v$. 
The Lie derivative of a differential one-form $a$ with respect to the vector field $\bf v$ is the one-form ${\sf L}_{\bf v} a$ satisfying the equation
\begin{equation}\label{eq:lieone}
\int_{\Gamma} {\sf L}_{\bf v} a = \der{}{t}\bigg|_{t=0} \int_{\varphi_t\circ\Gamma} a \, ,  
\end{equation}
for any smooth curve $\Gamma \subset \Omega$. Moreover, if $n=3$, we have the identification
\begin{equation}\label{eq:lie3d}
 {\sf L}_{\bf v} a = ({\bf{grad}} ({\bf v}\cdot \bf a) +({\bf{curl}} \, {\bf a}) \times {\bf v}) \cdot \ed {\bf x}\,,
\end{equation}
or if $n=2$,
\begin{equation}\label{eq:lie2d}
 {\sf L}_{\bf v} a = ({\bf{grad}} ({\bf v}\cdot \bf a) +({\mr{rot}} \, {\bf a}) {\bf v}^\perp) \cdot \ed {\bf x}\,,
\end{equation}
where at each $p\in\Omega$, $\bf{v}_p^\perp$ is the vector $\bf v_p$ after a clockwise rotation of $\pi/2$, and $\mr{rot}\,\bf{a}\coloneqq\mr{div}\,\bf{a}^\perp $ (see \cite{Abraham88} for more details).
The Lie derivative can be regarded as a generalisation of the scalar advection operator. To realise this, we need to find an equivalent of Equation~\eqref{eq:lieone} for scalar functions. For a given scalar function ${b}:\Omega\rightarrow \mathbb{R}$, we  can interpret pointwise evaluation as the natural analogue of the integration of differential one-forms over curves \cite{Heumann11}. With this substitution, Equation~\eqref{eq:lieone} becomes
\begin{equation}\label{eq:lie0}
{\sf L}_{\bf v} {b} = \der{}{t}\bigg|_{t=0} {b} \circ \varphi_t\, = {\bf v}\cdot {\bf{grad}}\, b,  
\end{equation}
which is just the directional derivative of ${b}$ in the direction of $\bf v$.

We conclude this section by introducing some notation for the function spaces and relative inner products we will use in this paper. We denote by $L^2(\Omega)$ (respectively ${\bm L}^2(\Omega)$) the space of square-integrable functions (respectively vector fields) on $\Omega$. We denote by $(\cdot, \cdot)_{\Omega}$ and $\|\cdot\|_{\Omega}$ the inner product and associated norm for both $L^2(\Omega)$ and ${\bm L}^2(\Omega)$.
The definitions extend to one-forms, i.e.\ for any one-form $a={\bf a}\cdot{\ed} {\bf x}$ and $b={\bf b}\cdot{\ed} {\bf x}$,
\begin{equation}\label{eq:formpair}
(a,b)_\Omega \coloneqq \langle a, {\bf b} \rangle = ({\bf a},{\bf b})_{\Omega}\, .
\end{equation}
Moreover, we denote by $L^p(\Omega)$, with $1\leq p \leq \infty$, the standard generalisation of $L^2(\Omega)$ with norm $\|\cdot\|_{L^p(\Omega)}$. The spaces $H^k(\Omega)$ (respectively $W^{k,p}(\Omega)$) are the spaces of scalar functions with derivatives up to order $k>0$ in $L^2(\Omega)$ (respectively $L^p(\Omega)$); the relative norms are $\|\cdot\|_{H^k(\Omega)}$ and $\|\cdot\|_{W^{k,p}(\Omega)}$. Similar definitions hold for vector fields; for example, ${\bm L}^p(\Omega)$ is the space of vector fields with components in $L^p(\Omega)$. Finally, we define
\begin{equation}
{\bm H}(\mr{div},\Omega) \coloneqq \{ {\bf v}\in {\bm L}^2(\Omega)\,:\, \mr{div} \, \bf{v} \in L^2(\Omega) \}.
\end{equation}

\subsection{Geometric formulation of incompressible perfect fluids}
\label{sec:models}

In this section we review the formal variational derivation of the incompressible Euler equations. This will be important in Section~\ref{sec:dinc}, where we
develop our discretisation approach by repeating this derivation step by step in a finite dimensional setting.

The diffeomorphisms on $\Omega$ play a major role in the description of fluids as they can be used to represent the motion of particles in the absence of discontinuities such as fractures, cavitations or shocks. More specifically, denote by $\mr{Diff}(\Omega)$ the group of (boundary preserving) diffeomorphisms from $\Omega$ to itself, with group product given by map composition. The subgroup of volume preserving diffeomorphisms $\mr{Diff}_\mr{vol} (\Omega)$ is defined by
\begin{equation}
\mr{Diff}_\mr{vol} (\Omega) \coloneqq\{ g \in \mr{Diff} (\Omega)\, :\, \mr{det} (D g) = 1 \}\, ,
\end{equation}  
where $Dg$ denotes the Jacobian matrix of the map $g$.
This definition ensures that for any $g \in \mr{Diff}_\mr{vol} (\Omega)$ and any open set $U\subset \Omega$, $g(U)$ has the same measure as $U$. Then, the motion of an incompressible fluid is represented by a curve on $\mr{Diff}_\mr{vol}(\Omega)$, i.e.\ a one-parameter family of diffeomorphisms $\{g_t\}_{t\in I}$, where $I$ is an open interval of $\mathbb{R}$ containing $0\in \mathbb{R}$, so that, for all $t\in I$, $g_t \in \mr{Diff}_\mr{vol}(\Omega)$ and $g_0 = e$ the identity map on $\Omega$. This means that, for each point  $X \in \Omega$, $g_t(X)$ denotes the position occupied at time $t$ by the particle that was at $X$ when $t=0$. We define
the material velocity field,
\begin{equation}\label{eq:matvel}
\parder{g_t}{t}(X) = {\bf U} (t,X).
\end{equation}
This is a vector field on $\Omega$ that gives the velocity at time $t\in I$ of
the particle that occupied the position $X\in \Omega$ at time $t=0$,
for each $X\in \Omega$.  The spatial velocity is
the vector field
\begin{equation}
\label{eq:spatial}
{\bf u}(t,x)  \coloneqq {\bf U} (t,g_t^{-1}(x)).
\end{equation}
This gives the velocity of the particle occupying the position $x\in \Omega$ at time $t$. Therefore, in the material formulation $X\in \Omega$ is to be considered as a particle label, whereas in the spatial formulation $x\in \Omega$ is a fixed position in the physical space. 
By the fact that $g_t\in \mr{Diff}_{\mr{vol}}(\Omega)$, we obtain that any spatial velocity field is tangent to the boundary $\partial \Omega$ and has zero divergence. We also see that the spatial velocity is left unchanged when composing on the right $g_t$ with any time independent diffeomorphism $q \in \mr{Diff}(\Omega)$, i.e.\ 
\begin{equation}\label{eq:right}
\parder{}{ t}(g_t \circ q) \circ (g_t\circ q)^{-1} = \parder{}{t} g_t \circ g_t^{-1} \, .
\end{equation}
The interpretation of this is simple: composing the map $q$ with $g_t$ on the right is the same as relabelling the particles in the domain $\Omega$. On the other hand, the spatial velocity field gives the velocity at fixed positions in space and it is not dependent on the specific particle labels. 

The geometric picture of incompressible perfect fluids is based on interpreting $G\coloneqq \mr{Diff}_\mr{vol}(\Omega)$ as a configuration space, then applying the reduction techniques of dynamical systems on Lie groups \cite{Marsden10}. Strictly speaking, $\mr{Diff}_\mr{vol}(\Omega)$ is not a Lie group, and one should provide a rigorous characterisation of $\mr{Diff}_\mr{vol}(\Omega)$ to proceed in such a direction \cite{Ebin70}. Here we are not concerned with these details, and we will proceed formally.

A map $g \in G$ characterizes the configuration of the system by assigning to each particle label $X\in \Omega$ its physical position in the domain $x = g(X)$. 
Let ${\bf U} \in T_g G$. Then there exists a one-parameter family $\{g_t\}_{t\in I}$, defined as above, such that, for a fixed $\bar{t}\in I$, $g_{\bar t} = g$ and $\parder{}{t}|_{t=\bar{t}}\, g_t = {\bf U}$. By comparison with equation~\eqref{eq:matvel}, $\bf U$ can be interpreted as a material velocity field on $\Omega$.
In the classical Lagrangian setting the evolution of the system is described by a curve on $TG$. In our case, this would be equivalent to the material formulation where we follow the motion as a time-dependent diffeomorphism $g_t$ together with the associated material velocity field as defined in Equation~\eqref{eq:matvel}.

The tangent space at the identity $\mathfrak{g}\coloneqq T_eG$ is the Lie algebra of $G$. By Equation~\eqref{eq:spatial}, we can regard elements of $\mf{g}$ as spatial velocity fields on $\Omega$. Therefore, an element of $\mathfrak{g}= T_e G$ can be identified with a vector field on $\Omega$ tangent to its boundary $\partial \Omega$, and with zero divergence.  For all ${\bf u},{\bf v}\in \mf{g}$, we denote by $({\bf u},{\bf v}) \mapsto \mr{ad}_{\bf u} {\bf v}$ their Lie algebra product. Let $q_s$ be a curve on $G$ such that $q_0 = 0$ and $\dot{q}_0 = \bf v$, the map $\mr{ad}:\mf{g} \times \mf{g} \rightarrow \mf{g}$ is defined by
\begin{equation}\label{eq:adlie}
\mr{ad}_{\bf u} {\bf v} \coloneqq  \der{}{t}\bigg|_{t=0}\der{}{s}\bigg|_{s=0} {g_t}\circ{q_s}\circ g_t^{-1} =  \der{}{t}\bigg|_{t=0} (D {g_t}\,{\bf v}) \circ  g_t^{-1} =  - [{\bf u}, {\bf v}] \, ,
\end{equation}
where $[{\bf u}, {\bf v}]$ is the Jacobi-Lie bracket, defined in coordinates by
\begin{equation}\label{eq:ljbrac}
[{\bf u}, {\bf v}] \coloneqq \sum_{i,j=1}^n\left( u_j \parder{v_i}{x_j}- v_j \parder{u_i}{x_j}\right) {\bf e}_i\, .
\end{equation}

The Lagrangian of the system is a function $L:TG\rightarrow \mathbb{R}$, and is given by the kinetic energy of the spatial velocity field, i.e.\
\begin{equation}\label{eq:lag}
L(g,\dot{g}) = \frac{1}{2} \int_{\Omega} \|\dot{g}\circ g^{-1}\|^2 \, \mr{vol} \, .
\end{equation}
Such a Lagrangian is right invariant because of Equation~\eqref{eq:right}.
Therefore, the dynamics can be expressed in terms of the reduced Lagrangian $l({\bf u})\coloneqq L(e, {\bf u})$ only. More precisely,
Hamilton's principle for the Lagrangian in Equation~\eqref{eq:lag} is equivalent to
\begin{equation}\label{eq:varpri}
\delta \int_{t_0}^{t_1} l( {\bf u} ) \,\ed t = 0\, ,
\end{equation}
with variations $\delta {\bf u} = \dot{{\bf v}} - \mr{ad}_{{\bf u}} {\bf v}$, where ${\bf v}_t \in \mathfrak{g}$ is an arbitrary curve and ${\bf v}_{t_0}= {\bf v}_{t_1}=0$. The variational principle in Equation~\eqref{eq:varpri} produces the Euler-Poincar\'e equations,
\begin{equation}\label{eq:EPfluid}
\left\langle\der{}{t} \frac{\delta l}{\delta {\bf u}} , {\bf v} \right\rangle + \left\langle \frac{\delta l}{\delta {\bf u}}, \mr{ad}_{\bf u} {\bf v}  \right\rangle = 0\, ,
\end{equation}
for all ${\bf v}\in \mathfrak{g}$, where we define
\begin{equation}
\left\langle \frac{\delta l}{\delta {\bf u}} , {\bf v} \right\rangle\coloneqq \der{}{\epsilon}\bigg|_{\epsilon = 0} l({\bf u}+ \epsilon {\bf v})\, .
\end{equation}
Now $\delta l/\delta \bf u$ can be identified with a one-form $m$ on $\Omega$, which we will refer to as momentum, imposing that
\begin{equation}\label{eq:dlfluid}
\left\langle \frac{\delta l}{\delta {\bf u}} , {\bf v} \right\rangle
= \int_{\Omega} {m} ({\bf v})\, \mr{vol}\, ,
\end{equation}
for all ${\bf v}\in \mathfrak{g}$. In our case, we readily see that ${m} = u \coloneqq {\bf u}\cdot{\ed {\bf x}}$, however we will distinguish between momentum one-forms ${m}$ and velocity one-forms $u$ since this will be useful when describing model problems characterised by more complex Lagrangians. Inserting Equations~\eqref{eq:adlie} and \eqref{eq:dlfluid} into Equation~\eqref{eq:EPfluid} we obtain
\begin{equation}\label{eq:euler1}
\int_{\Omega} \dot{{m}} ({\bf v}) \,\mr{vol}-\int_{\Omega} {m} ([{\bf u},{\bf v}]) \,\mr{vol}=0\,.
\end{equation}
If $n=3$, for divergence free vector fields $\bf u$ and $\bf v$ we have the useful identity $[{\bf u}, {\bf v}] = - {\bf{curl}}({\bf u} \times {\bf v})$. Hence, integrating by parts, the second integral can be rewritten as follows,
\begin{equation}\label{eq:intpartlie}
\begin{aligned}
\int_{\Omega} {m}([{\bf u},{\bf v}]) \,\mr{vol} & = -
\int_{\Omega} {\bf m} \cdot {\bf {curl}} ({\bf u} \times {\bf v})\, \mr{vol}\\
& = \int_{\Omega} ( {\bf u} \times {\bf{ curl}} \,{\bf m}) \cdot {\bf v} \, \mr{vol}  \\
& = - \int_{\Omega}  {\sf L}_{\bf u} {m} ({\bf v})\,\mr{vol}\, . 
\end{aligned}
\end{equation}
The same result holds for $n=2$.
Note that the ${\bf{grad}}$ term in Equation~\eqref{eq:lie3d} or \eqref{eq:lie2d} vanishes upon pairing with $\bf v$ because $\mr{div}\,{\bf v} =0$. Reinserting Equation~\eqref{eq:intpartlie} into Equation~\eqref{eq:euler1} yields the incompressible Euler equations in variational form
\begin{equation}\label{eq:euler}
\langle \dot{{m}}, {\bf v} \rangle + \langle {\sf L}_{\bf u} {m}, {\bf v} \rangle = 0\, ,
\end{equation}
for all ${\bf v} \in \mathfrak{g}$, where $m=u\coloneqq \bf u \cdot {\ed {\bf x}}$.

\begin{remark}
\label{rem:epadv}
Equation~\eqref{eq:intpartlie} establishes a link between the bracket on $\mathfrak{g}$ coming from Lagrangian reduction and the advection of the momentum one-form ${m}$ by the Lie derivative operator. In the following we will develop a discretisation approach that maintains this property and in this way provides a discrete version of both the bracket and the Lie derivative.
\end{remark}

\section{Finite element variational discretisation}\label{sec:fevari}
In this section we construct a variational algorithm for the incompressible Euler equations using discrete Lie derivatives intended as finite element operators as the main ingredient. As starting point for this, we describe the Galerkin discretisation of Lie derivatives proposed by \cite{Heumann11} (see also \cite{Heumann13}), in the case where the advecting velocity is a smooth vector field. Then, in Section~\ref{sec:dlie}, we consider a generalisation for advecting velocities in finite element spaces (meaning that the velocities have reduced smoothness). Such a generalisation coincides with the one proposed in \cite{Heumann16} for the more general case of piecewise Lipschitz-continuous advecting velocities. Next, in Section~\ref{sec:dinc}, we construct a variational algorithm using these operators as discrete Lie algebra variables, adapting the approach of \cite{Pavlov1} to the finite element setting. Moreover, we show that such an algorithm coincides with the centred flux discretisation described in \cite{Guzman16}. In Section~\ref{sec:upwind}, we include upwind stabilisation to the method derived in the previous section.
Finally, in Section~\ref{sec:dkel}, we show that both the centred flux and upwind schemes conserve energy and possess a discrete version of Kelvin's circulation theorem.
Throughout this section we assume $n=3$, although all the results of this section extend without major modifications to the case $n=2$.

\subsection{Discrete Lie derivatives}\label{sec:dlie}
Let ${\mc T}_h$ be a triangulation on $\Omega$, i.e.\ a decomposition of $\Omega$ into simplices $K\subset \Omega$, with $h$ being the maximum simplex diameter. We define $V_h\subset L^2(\Omega)$ to be a chosen scalar finite element space on ${\mc T}_h$, and ${\bf W}_h \subset{\bm H}(\mr{div},\Omega)$ to be a chosen ${\bm H}(\mr{div},\Omega)$-conforming finite element space of vector fields on $\Omega$. We define $W_h\coloneqq {\bf W}_h\cdot{\ed {\bf x}}$, i.e.\ $W_h$ is the set of one-forms $a = {\bf a} \cdot \ed {\bf x}$ with $ {\bf a} \in {\bf W}_h$.

More specifically, for $r\geq 0$, we denote by $V^r_h\subset L^2(\Omega)$ a scalar finite element space such that, for all elements $K\in {\mc T}_h$, $V^r_h|_K = \mc P_{r}(K)$  where $\mc P_r(K)$ is the set of polynomial functions on $K$ of degree up to $r$. In other words, $V_h^r$ is the standard scalar discontinuous Galerkin space of order $r$. Analogously, we denote by  ${\bf W}^r_h \subset {\bm H}(\mr{div},\Omega)$ either the Raviart-Thomas space of order $r$, defined  for $r\geq 0$ by
\begin{equation}\label{eq:RT}
\bf{RT}_r(\mc T_h) \coloneqq \{ {\bf u}\in {\bm H}(\mr{div},\Omega) ~:~ {\bf u}|_K \in (\mc{P}_r(K))^n +{\bf x} \mc{P}_r(K) \}\,,
\end{equation}
or the or Brezzi-Douglas-Marini space of order $r$, and defined for $r\geq 1$ by
\begin{equation}
\bf{BDM}_r(\mc T_h)\coloneqq \{ {\bf u}\in {\bm H}(\mr{div},\Omega) ~:~ {\bf u}|_K \in (\mc{P}_r(K))^n \} \,.
\end{equation}
Later, we will use the divergence-free subspace of ${\bf W}^r_h$, which is the same for  $\bf{RT}_r(\mc T_h)$ and $\bf{BDM}_r(\mc T_h)$ (see, e.g., Corollary 2.3.1 in \cite{Boffi13}).

Let $\mc F$ be the sets of $(n-1)$-dimensional simplices in ${\mc T}_h$, and let ${\mc F}^\circ\subset  {\mc F}$ be the set of facets $f\in {\mc F}$ such that $f \cap \partial \Omega = \emptyset$. We fix an orientation for all $f\in {\mc F}$ by specifying the unit normal vector ${\bf n}_f$ in such a way that on the boundary ${\bf n}_f$ is equal to the outward pointing normal ${\bf n}_{\partial \Omega}$. An orientation on the facet $f\in {\mc F}$ defines a positive and negative side on $f$, so that any $a\in V_h$ has two possible restrictions on $f$, denoted $a^+$ and $a^-$, with $a^+\coloneqq a|_{K^+}$ and $K^+\in \mc T_h$ being the element with outward normal $\bf{n}_f$. Therefore, we can define for each $a \in V_h$ and on each $f\in \mc F^\circ$ the jump operator  $[a] \coloneqq a^+ - a^-$ and the average operator $\{a\} \coloneqq (a^+ + a^-)/2$. Such definitions extend without modifications to vector fields ${\bf a} \in {\bf W}_h$ or to their one-form representation $a = {\bf a}\cdot {\ed {\bf x}}\in W_h$. 

\begin{lemma}\label{lem:hdivc}
Let ${\bf W}_h\subset {\bm H}(\mr{div},\Omega)$, then for all ${\bf a}\in {\bf W}_h$ and for all $f\in \mc F$, ${\bf a}\cdot {\bf n}_f$ is single-valued on $f$, and in particular $[{\bf a}]\cdot {\bf n}_f = 0$.
\end{lemma}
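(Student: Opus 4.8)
The plan is to rely on the characterisation of ${\bm H}(\mr{div},\Omega)$ through the distributional divergence. Membership ${\bf a}\in{\bm H}(\mr{div},\Omega)$ means precisely that there exists $g\in L^2(\Omega)$ representing $\mr{div}\,{\bf a}$, so that
\[
\int_\Omega {\bf a}\cdot{\bf{grad}}\,\phi\,\mr{vol} = -\int_\Omega g\,\phi\,\mr{vol}
\]
for every $\phi\in C_c^\infty(\Omega)$. The idea is to evaluate the left-hand side element by element and to match the resulting facet terms against this global identity; the continuity of the normal component will emerge as exactly the condition that makes the two computations compatible.

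First I would split the integral over the triangulation, $\int_\Omega=\sum_{K\in\mc T_h}\int_K$, and integrate by parts on each element. This is legitimate since ${\bf a}|_K$ is polynomial, hence smooth up to $\partial K$; on each $K$ it produces a volume contribution $-\int_K(\mr{div}\,{\bf a})\,\phi\,\mr{vol}$ together with a boundary contribution $\int_{\partial K}({\bf a}\cdot{\bf n}_{\partial K})\,\phi\,\mr{d}s$. Summing over $K$, the volume terms assemble into $-\int_\Omega(\mr{div}_h\,{\bf a})\,\phi\,\mr{vol}$, where $\mr{div}_h\,{\bf a}$ denotes the element-wise divergence, an $L^2(\Omega)$ function; the boundary terms collect facet by facet. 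For $\phi\in C_c^\infty(\Omega)$ the facets lying on $\partial\Omega$ drop out, while on each interior facet $f\in\mc F^\circ$ the two adjacent elements $K^+$ and $K^-$ contribute with opposite outward normals ${\bf n}_f$ and $-{\bf n}_f$. Using the orientation convention fixed above and the jump notation, their sum is exactly $\int_f\phi\,[{\bf a}]\cdot{\bf n}_f\,\mr{d}s$.

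Comparing with the defining identity then gives
\[
\sum_{f\in\mc F^\circ}\int_f\phi\,[{\bf a}]\cdot{\bf n}_f\,\mr{d}s = \int_\Omega\big(g-\mr{div}_h\,{\bf a}\big)\,\phi\,\mr{vol}
\]
for all $\phi\in C_c^\infty(\Omega)$. I would first take $\phi$ supported in the interior of a single element, which annihilates every facet integral and forces $g=\mr{div}_h\,{\bf a}$ almost everywhere, so that the right-hand side vanishes identically. The remaining identity $\sum_{f}\int_f\phi\,[{\bf a}]\cdot{\bf n}_f\,\mr{d}s=0$, valid for all admissible $\phi$, is then localised: choosing $\phi$ supported in a small ball about a point in the relative interior of a chosen facet $f$, disjoint from all other facets, isolates the single term $\int_f\phi\,[{\bf a}]\cdot{\bf n}_f\,\mr{d}s$ and, letting the trace of $\phi$ on $f$ vary over a dense family, yields $[{\bf a}]\cdot{\bf n}_f=0$ in $L^2(f)$. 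The single-valuedness of ${\bf a}\cdot{\bf n}_f$ on interior facets is a restatement of this; on boundary facets it is immediate, as only one element abuts $f$.

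The one point requiring care is the localisation step: the test functions must concentrate near a single interior facet without their support spilling onto neighbouring facets. This is handled by the standard observation that distinct facets meet only in the $(n-2)$-skeleton, a set of $(n-1)$-measure zero, so a sufficiently small ball about a generic point of $f$ meets $f$ alone. Consequently the argument reduces to routine bookkeeping of signs and orientations rather than to any genuine analytic difficulty; the essential content is simply that an $L^2$ distributional divergence cannot tolerate the surface-supported contributions that a jump in the normal component would generate.
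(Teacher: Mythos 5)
Your proof is correct, but note how it sits relative to the paper: the paper offers no argument of its own, its entire proof being the citation ``See Lemma 5.1 in \cite{Arnold06}''. What you have written is, in substance, the standard proof of that cited result: characterise ${\bm H}(\mr{div},\Omega)$ via the distributional divergence, integrate by parts element by element, identify the distributional divergence with the broken divergence $\mr{div}_h\,{\bf a}$ by testing with $\phi$ supported in element interiors, and then localise the surviving facet identity near a point of the relative interior of a single facet, using that distinct facets meet only in the $(n-2)$-skeleton. So you have reconstructed the outsourced argument rather than found a genuinely different one; what your version buys is self-containedness, at the price of two points worth flagging. First, elementwise integration by parts and the very notion of the jump $[{\bf a}]$ require ${\bf a}$ to be piecewise smooth (e.g.\ piecewise polynomial); this is not implied by the bare hypothesis ${\bf W}_h\subset{\bm H}(\mr{div},\Omega)$ as literally stated, but it is exactly the paper's setting (Raviart--Thomas or Brezzi--Douglas--Marini spaces), and the lemma's statement already presupposes it for $[{\bf a}]$ to be meaningful, so this is a presentational rather than mathematical gap. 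Second, a trivial sign slip: with your orientation and jump conventions the comparison identity should read $\sum_{f\in\mc F^\circ}\int_f\phi\,[{\bf a}]\cdot{\bf n}_f = \int_\Omega(\mr{div}_h\,{\bf a}-g)\,\phi\;\mr{vol}$ rather than with $g-\mr{div}_h\,{\bf a}$ on the right; since both of your localisation steps use only that each side vanishes, nothing downstream is affected.
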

\begin{proof}
See Lemma 5.1 in \cite{Arnold06}.
\end{proof}
For all elements $K\in \mc T_h$ and $f\in \mc F$, we denote by $(\cdot,\cdot)_K$ and $(\cdot,\cdot)_f$ the standard $L^2$ inner products on $K$ and $f$. We use the same definition for both scalar and vector quantities, and we extend it to one-forms as in Equation~\eqref{eq:formpair}. 
Furthermore, given a smooth vector field $\bm \beta$, for any $f\in \mc F$ and for any smooth scalar functions $a$ and $b$ on $f$ we define 
\begin{equation}(a,b)_{f,{\bm \beta}} \coloneqq ( {\bm \beta}\cdot{\bf n}_f\, a, b)_f\,.
\end{equation}
Similarly, for any element $K\in\mc T_h$ and for any smooth scalar functions $a$ and $b$ on $\partial K$ define 
\begin{equation}(a,b)_{\partial K,{\bm \beta}} \coloneqq ( {\bm \beta}\cdot{\bf n}_{\partial K}\, a, b)_{\partial K}\,,
\end{equation}
where ${\bf n}_{\partial K}$ is the unit normal to the boundary of the element $K$ pointing outwards. 
The same definitions hold if $a$ and $b$ are one-forms or vector fields with smooth components on $f$ or $\partial K$.

A discrete Lie derivative in this paper means any finite element operator from $V_h$ to itself, or from $W_h$ to itself, which is also a consistent discretisation of the Lie derivative. 
In \cite{Heumann11} the authors proposed two types of discrete Lie derivatives\footnote{The discrete Lie derivatives proposed in \cite{Heumann11} are more general than considered in this paper, since they account for a larger class of finite element spaces, either conforming or not. In the language of differential forms, we use spaces that are conforming with respect to the $L^2$-adjoint of the exterior derivative; this case is considered explicitly in Section 4.1.1 of \cite{Heumann}.} based on either an Eulerian or a semi-Lagrangian approach, which were used in the context of linear advection and advection-diffusion problems. 
We will take as starting point their Eulerian discretisation (see Section 4 in \cite{Heumann11}), where the advecting velocity is a fixed smooth vector field $\bm \beta$.
In the following, we will consider extensions of this approach to the case where $\bm \beta$ is not fully continuous; specifically $\bm \beta \in {\bf W}_h\subset {\bm H}(\mr{div},\Omega)$. The resulting operators coincide with the ones proposed in \cite{Heumann16} for piecewise Lipschitz-continuous advecting velocities. Such a discussion will be useful to discretise the nonlinear advection term in the Euler equations. The Eulerian-type discrete Lie derivative proposed by \cite{Heumann11} is given by the following definition.

\begin{definition}\label{def:dLie}
Let ${\bm \beta}$ be a smooth vector field on $\Omega$ tangent to the boundary $\partial \Omega$. The finite element operator ${\sf L}_{\bm \beta}^h: V_h \rightarrow V_h$ (respectively ${\sf L}_{\bm \beta}^h: { W}_h \rightarrow {W}_h$) is defined by 
\begin{equation}\label{eq:clas}
({\sf L}_{\bm \beta}^h a, b)_{\Omega} = \sum_K ({\sf L}_{\bm \beta} a, b)_{K}  + \sum_{f\in \mc F^\circ} \Big(-([a ], \{b\})_{f,{\bm \beta}} +  ([a], [b])_{f,c_f {\bm \beta}}\Big)\,,
\end{equation}
where $c_f:f \rightarrow \mathbb{R}$ is a scalar function depending on $\bm \beta$, for all $a,b\in V_h$ (respectively $a,b \in W_h$).
\end{definition}
The consistency of such a discretisation is proved by observing that if the advected quantity $a$ is smooth, ${\sf L}^h_{\bm \beta} a$ coincides with the Galerkin projection of ${\sf L}_{\bm \beta}a$ onto $V_h$ or $W_h$. Note that the Lie derivative discretisation on $V_h$ as defined in Equation~\eqref{eq:clas} coincides with the classical discontinuous Galerkin discretisation of the advection operator described in \cite{Brezzi04}. This can be verified using integration by parts in Equation~\eqref{eq:clas}, yielding the following equivalent definition for the operator ${\sf L}_{\bm \beta}^h: V_h \rightarrow V_h$,  
\begin{equation}\label{eq:clas1}
({\sf L}_{\bm \beta}^h a, b)_{\Omega} = -\sum_K ( a, \ddiv({ \bm \beta}\,b))_{K}  + \sum_{f\in \mc F^\circ} \Big((\{a \}, [b])_{f,{\bm \beta}} +  ([a], [b])_{f,c_f {\bm \beta}}\Big)\,,
\end{equation}
for all $a,b\in V_h$. Similarly, the operator ${\sf L}_{\bm \beta}^h: W_h \rightarrow W_h$ can be equivalently defined by
\begin{equation}\label{eq:clas11}
	({\sf L}_{\bm \beta}^h a, b)_{\Omega}= \sum_K ( {\bf a}, {\bf{curl}} ({\bm \beta}\times {\bf b}) -{\bm \beta} \,\mr{div}\,{\bf b})_{K} +\sum_{f\in \mc F^\circ} \Big((\{ {\bf a} \}, [\bf{b}])_{f,{\bm \beta}} +  ([\bf{a}], [\bf{b}])_{f,c_f {\bm \beta}}\Big)\,,
\end{equation}
for all $a,b\in W_h$ with $a = {\bf a}\cdot {\ed }{\bf x}$ and $b = {\bf b} \cdot {\ed{\bf x} }$. Details on the calculations leading to Equations~\eqref{eq:clas1} and \eqref{eq:clas11} can be found in \cite{Heumann}.

 
\begin{remark}\label{rem:upwind0} 
There are two choices for the function $c_f$ that are particularly important:
\begin{itemize}
\item $c_f = 0$ (centred discretisation). In this case, Equation~\eqref{eq:clas1} reduces to 
\begin{equation}\label{eq:clasc}
({\sf L}_{\bm \beta}^h a, b)_{\Omega}= -\sum_K (a, \mr{div} ({\bm \beta}\, b))_{K}  + \sum_{f\in \mc F^\circ}  (\{a\}, [b])_{f,{\bm \beta}}\,,
\end{equation}
for all $a,b\in V_h$. We refer to such discretisation as centred, since the facet integrals contain the average of the advected quantity $a$;
\item  $c_f = {\bm \beta} \cdot{\bf n}_f /(2|{\bm \beta} \cdot{\bf n}_f|)$ (upwind discretisation). In this case, Equation~\eqref{eq:clas1} reduces to
\begin{equation}\label{eq:clasup}
({\sf L}_{\bm \beta}^h a, b)_{\Omega}= -\sum_K (a, \mr{div} ({\bm \beta}\, b))_{K}  + \sum_{f\in \mc F^\circ}  (a^{up}, [b])_{f,{\bm \beta}}\,,
\end{equation}
for all $a,b\in V_h$, where $a^{up} = a^+$ if ${\bm \beta}\cdot {\bf n}_f\geq0$ and $a^{up} = a^-$ if ${\bm \beta}\cdot {\bf n}_f<0$, i.e.\ in the facet integrals $a$ is always evaluated from the upwind side. 
\end{itemize}
A similar interpretation holds when considering the action of ${\sf L}_{\bm \beta}^h$ on $W_h$.
\end{remark}


\begin{lemma}\label{lem:alt}
Let $n=3$ and let ${\bm \beta}$ be a smooth vector field on $\Omega$ tangent to the boundary $\partial \Omega$. Then, the discrete Lie derivative ${\sf L}^h_{\bm \beta}:W_h\rightarrow W_h$ in Equation~\eqref{eq:clas} can be equivalently defined by 
\begin{equation}\label{eq:alt}
\begin{aligned}
	({\sf L}_{\bm \beta}^h a, b)_{\Omega}= &\sum_K ( {\bf a}, {\bf{curl}} ({\bm \beta}\times {\bf b}) -{\bm \beta} \,\mr{div}\,{\bf b})_{K}  \\&+ \sum_{f\in \mc F^\circ} \Big(( {\bf n}_f\times\{{\bf a}\}, {\bm \beta} \times [{\bf b}])_{f}+  (c_f\, {\bf n}_f\times[{\bf a}], {\bm \beta} \times [{\bf b}])_{f}\Big)\,,
\end{aligned}
\end{equation}
for all $a,b\in W_h$ with $a = {\bf a}\cdot {\ed }{\bf x}$ and $b = {\bf b} \cdot {\ed{\bf x} }$.
\end{lemma}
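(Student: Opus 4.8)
The plan is to prove \eqref{eq:alt} by comparing it term by term with the already-established equivalent definition \eqref{eq:clas11}. Since the two expressions carry identical volume contributions $\sum_K ({\bf a}, {\bf{curl}}({\bm \beta}\times{\bf b}) - {\bm \beta}\,\mr{div}\,{\bf b})_K$, the entire claim reduces to showing that, facet by facet, the surface terms agree; that is, for every $f\in \mc F^\circ$ one must have
\begin{equation*}
({\bf n}_f\times\{{\bf a}\}, {\bm \beta}\times[{\bf b}])_f = (\{{\bf a}\},[{\bf b}])_{f,{\bm \beta}}
\quad\text{and}\quad
(c_f\,{\bf n}_f\times[{\bf a}], {\bm \beta}\times[{\bf b}])_f = ([{\bf a}],[{\bf b}])_{f,c_f{\bm \beta}}.
\end{equation*}

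The key tool is the pointwise vector identity $(A\times B)\cdot(C\times D) = (A\cdot C)(B\cdot D) - (A\cdot D)(B\cdot C)$, used together with the normal continuity supplied by Lemma~\ref{lem:hdivc}. First I would expand the integrand of the first surface term with $A={\bf n}_f$, $B=\{{\bf a}\}$, $C={\bm \beta}$ and $D=[{\bf b}]$, obtaining $({\bf n}_f\cdot{\bm \beta})(\{{\bf a}\}\cdot[{\bf b}]) - ({\bf n}_f\cdot[{\bf b}])(\{{\bf a}\}\cdot{\bm \beta})$. Because ${\bf b}\in{\bf W}_h\subset{\bm H}(\mr{div},\Omega)$, Lemma~\ref{lem:hdivc} gives $[{\bf b}]\cdot{\bf n}_f = 0$ on $f$, so the second product drops out and the integrand collapses to $({\bm \beta}\cdot{\bf n}_f)\,\{{\bf a}\}\cdot[{\bf b}]$. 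Integrating over $f$ and recalling the definition $(\cdot,\cdot)_{f,{\bm \beta}} = ({\bm \beta}\cdot{\bf n}_f\,\cdot,\cdot)_f$ then yields exactly $(\{{\bf a}\},[{\bf b}])_{f,{\bm \beta}}$.

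The second surface term is handled in the same way: applying the identity with $B=[{\bf a}]$ and carrying the scalar factor $c_f$ through, the cross term again contains ${\bf n}_f\cdot[{\bf b}] = 0$ and vanishes, leaving $c_f({\bm \beta}\cdot{\bf n}_f)\,[{\bf a}]\cdot[{\bf b}]$, whose integral over $f$ is $([{\bf a}],[{\bf b}])_{f,c_f{\bm \beta}}$. Summing both identities over $f\in\mc F^\circ$ and restoring the common volume term reproduces \eqref{eq:clas11}, which establishes the equivalence with \eqref{eq:clas}.

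As for the main obstacle, there is essentially no analytic difficulty here; the argument is a direct algebraic verification. The only point requiring care is recognising that the ${\bm H}(\mr{div})$-conformity of ${\bf W}_h$ — precisely the vanishing of ${\bf n}_f\cdot[{\bf b}]$ guaranteed by Lemma~\ref{lem:hdivc} — is exactly what annihilates the spurious tangential cross terms, so that the cross-product form \eqref{eq:alt} and the normal-weighted form \eqref{eq:clas11} coincide.
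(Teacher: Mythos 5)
Your proof is correct and follows essentially the same route as the paper's own (very terse) argument: start from the equivalent form \eqref{eq:clas11}, expand the facet integrands with the identity $({\bf A}\times{\bf B})\cdot({\bf C}\times{\bf D})=({\bf A}\cdot{\bf C})({\bf B}\cdot{\bf D})-({\bf B}\cdot{\bf C})({\bf A}\cdot{\bf D})$, and invoke Lemma~\ref{lem:hdivc} to kill the spurious term. In fact you are slightly more precise than the paper, which cites the normal continuity of ${\bf a}$, whereas the cancellation in this particular grouping hinges on ${\bf n}_f\cdot[{\bf b}]=0$ — exactly the fact you use, and it holds since ${\bf b}\in{\bf W}_h$ as well.
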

\begin{proof}
The proof relies on the fact that if ${\bf a} \in {\bf W}_h\subset{\bm H}(\mr{div},\Omega)$ then, by Lemma~\ref{lem:hdivc}, ${\bf a}\cdot {\bf n}_f$ is single-valued across elements of $\mc T_h$, so $[{\bf a}]\cdot {\bf n}_f = 0$ on all facets $f\in \mc F^\circ$.
Then, the result follows by applying to the facet integrals in Equation~\eqref{eq:clas11} the vector identity 
\begin{equation}
(\bf{A} \times {\bf B}) \cdot (\bf{C} \times {\bf D})= (\bf{A}\cdot\bf{C})(\bf{B}\cdot\bf{D})-(\bf{B}\cdot\bf{C})(\bf{A}\cdot\bf{D}) \,,
\end{equation}
for all $\bf{A},\bf{B},\bf{C},\bf{D}\in \mathbb{R}^3$. See \cite{Heumann} for details.
\end{proof}

As we are concerned with discretising the Euler equations, we want to extend Definition~\ref{def:dLie} to the case where the advecting velocity also belongs to a finite element space. We will limit ourself to the case ${\bm \beta} \in {\bf W}_h\subset {\bm H}(\mr{div},\Omega)$. This will prove to be enough for our scope since if ${\bm \beta} \in {\bm H}(\mr{div},\Omega)$, we can ensure that $\mr{div}\, {\bm \beta} = 0$. 

\begin{remark}\label{rem:hdiv}
Equation~\eqref{eq:clas} is still well-defined if ${\bm \beta} \in {\bf W}_h$, since in this case, by Lemma~\ref{lem:hdivc}, ${\bm \beta} \cdot {\bf n}_f$ is single-valued on each facet $f \in {\mc F}^\circ$.
\end{remark}
 
In view of Remark~\ref{rem:hdiv}, Equation~\eqref{eq:clas}  can be used without modification to extend the definition of the discrete Lie derivative to the case ${\bm \beta} \in {\bf W}_h$. However, this is not the only possible extension: there exist many other discretisation approaches that reduce to the one of Definition~\ref{def:dLie} for continuous advecting velocity. In particular, Lemma~\ref{lem:alt} suggests the following alternative definition, which is also the one adopted in \cite{Heumann16}.


\begin{definition}\label{def:dLiehdiv}
Let $n=3$ and let ${\bm \beta}\in{\bf W}_h$ such that ${\bm \beta} \cdot{\bf n}_{\partial \Omega} =0$ on $\partial \Omega$. The finite element operator ${\sf X}_{\bm \beta}^h:V_h \rightarrow V_h$ is defined by ${\sf X}_{\bm \beta}^h \,a \coloneqq {\sf L}_{\bm \beta}^h \, a$ for all $a \in V_h$; whereas the operator ${\sf X}_{\bm \beta}^h:W_h \rightarrow W_h$ is defined by
\begin{equation}\label{eq:clas2}
\begin{aligned}
({\sf X}_{\bm \beta}^h a, b)_{\Omega}= &\sum_K ( {\bf a}, {\bf{curl}} ({\bm \beta}\times {\bf b}) -{\bm \beta} \,\mr{div}\,{\bf b})_{K}  \\&+ \sum_{f\in \mc F^\circ} \Big(({\bf n}_f\times\{{\bf a}\}, [{\bm \beta} \times {\bf b}])_{f}+  (c_f\, {\bf n}_f\times[{\bf a}], [{\bm \beta} \times {\bf b}])_{f}\Big)\,,
\end{aligned}
\end{equation}
where $c_f:f \rightarrow \mathbb{R}$ is a scalar function depending on $\bm \beta$, for all $a,b\in W_h$ with $a ={\bf a} \cdot \ed {\bf x}$ and $b ={\bf b} \cdot \ed {\bf x}$. In other words, ${\sf X}_{\bm \beta}^h$ coincides with ${\sf L}^h_{\bm \beta}$ when applied to scalar functions but not when applied to one-forms.
\end{definition}

\begin{proposition}
For a continuous vector field $\bm \beta$, we have the equivalence ${\sf X}_{\bm \beta}^h={\sf L}_{\bm \beta}^h$. Therefore, ${\sf X}_{\bm \beta}^h$ is a discrete Lie derivative, i.e. it is a consistent discretisation of the Lie derivative operator. 
\end{proposition}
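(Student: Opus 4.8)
The plan is to turn the claim into a direct, term-by-term comparison of the two defining formulas. On scalar functions there is nothing to prove: Definition~\ref{def:dLiehdiv} sets ${\sf X}_{\bm \beta}^h a \coloneqq {\sf L}_{\bm \beta}^h a$ for all $a\in V_h$, so the two operators agree on $V_h$ by construction. Hence the entire content of the proposition concerns the action on one-forms in $W_h$, and I only need to show that Equation~\eqref{eq:clas2} reproduces ${\sf L}_{\bm \beta}^h$ whenever $\bm \beta$ is continuous.

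To this end I would first rewrite ${\sf L}_{\bm \beta}^h$ on $W_h$ in the form \eqref{eq:alt} furnished by Lemma~\ref{lem:alt}; this is legitimate since a continuous $\bm \beta$ is smooth within each element and tangent to $\partial\Omega$, and the relevant facet manipulation rested only on $[{\bf b}]\cdot{\bf n}_f = 0$ (Lemma~\ref{lem:hdivc}). Comparing \eqref{eq:alt} with \eqref{eq:clas2} term by term, the volume integrals $\sum_K ( {\bf a}, {\bf{curl}} ({\bm \beta}\times {\bf b}) -{\bm \beta}\,\mr{div}\,{\bf b})_{K}$ are literally identical, so the only possible discrepancy lives in the facet integrals, where \eqref{eq:alt} carries the factor $\bm\beta\times[{\bf b}]$ while \eqref{eq:clas2} carries $[\bm\beta\times{\bf b}]$.

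The crux is therefore the elementary jump identity
\[
[\bm\beta\times{\bf b}] = \{\bm\beta\}\times[{\bf b}] + [\bm\beta]\times\{{\bf b}\}\,,
\]
which one verifies directly by expanding the averages and jumps into their $\pm$ traces. Since $\bm\beta$ is continuous across every facet $f\in\mc F^\circ$, its jump vanishes, $[\bm\beta] = 0$, and its average reduces to its single value, $\{\bm\beta\} = \bm\beta$; substituting these gives $[\bm\beta\times{\bf b}] = \bm\beta\times[{\bf b}]$ on each facet. Consequently both facet integrands of \eqref{eq:clas2} coincide with the corresponding integrands of \eqref{eq:alt}, and summing over $\mc F^\circ$ yields $({\sf X}_{\bm\beta}^h a, b)_\Omega = ({\sf L}_{\bm\beta}^h a, b)_\Omega$ for all $b\in W_h$, that is, ${\sf X}_{\bm\beta}^h = {\sf L}_{\bm\beta}^h$ on $W_h$.

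The closing assertion is then a corollary: ${\sf L}_{\bm\beta}^h$ was already argued to be a consistent discretisation of the Lie derivative (its action on a smooth argument is the Galerkin projection of ${\sf L}_{\bm\beta}$), so the equality just established transfers consistency verbatim to ${\sf X}_{\bm\beta}^h$. I do not anticipate a genuine obstacle here; the single point requiring care is that Lemma~\ref{lem:alt} must be invoked with $\bm\beta$ \emph{continuous} rather than merely in ${\bf W}_h$, so that the replacement of $[\bm\beta\times{\bf b}]$ by $\bm\beta\times[{\bf b}]$ is exact rather than approximate. This is precisely the place where continuity, as opposed to mere ${\bm H}(\mr{div})$-membership, is used, and it explains why ${\sf X}_{\bm\beta}^h$ and ${\sf L}_{\bm\beta}^h$ may genuinely differ once $\bm\beta$ acquires normal-component-continuous but tangentially discontinuous finite element traces.
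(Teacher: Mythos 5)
Your proposal is correct and takes essentially the same route as the paper: both arguments compare the formula \eqref{eq:alt} from Lemma~\ref{lem:alt} with the defining formula \eqref{eq:clas2} and observe that continuity of $\bm\beta$ across facets gives ${\bm\beta}\times[{\bf b}] = [{\bm\beta}\times{\bf b}]$ on every $f\in\mc F^\circ$, so the facet terms coincide. You simply make explicit the jump product identity $[{\bm\beta}\times{\bf b}] = \{\bm\beta\}\times[{\bf b}]+[\bm\beta]\times\{{\bf b}\}$ that the paper's one-line proof leaves implicit.
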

\begin{proof}
This is immediate by comparing Equation~\eqref{eq:alt} with  Equation~\eqref{eq:clas2} and observing that if $\bm \beta$ is continuous, for any ${\bf b}\in {\bf W}_h$, we have ${\bm \beta} \times [{\bf b}] = [{\bm \beta} \times {\bf b}] $ on all facets $f\in {\mc F}^\circ$.
\end{proof}

\subsection{Discrete incompressible Euler equations}\label{sec:dinc}
We now use the discrete Lie derivatives defined in the previous section to design a variational discretisation approach for the incompressible Euler equations. The derivation presented here closely follows \cite{Pavlov1}, but is adapted to the context of finite element discretisations, and does not require a dual grid. First, we will define a discrete group that approximates the configuration space $\mr{Diff}_{\mr{vol}}(\Omega)$. Next, we will derive the relative Euler-Poincar\'e equations for an appropriately chosen Lagrangian. Finally, we will connect the resulting algorithm to the Lie derivative discretisations in Definition~\ref{def:dLiehdiv}. Such an approach will lead us to rediscover the centred flux discretisation proposed in \cite{Guzman16}. 

The main issue related to deriving a variational integrator for the incompressible perfect fluids is that the configuration space $G = \mr{Diff}_{\mr{vol}}(\Omega)$ is infinite dimensional, so one needs to find an appropriate finite dimensional approximation that converges, in the limit, to the original system. In \cite{Pavlov1}, this issue is solved by applying Koopman's lemma, i.e.\ identifying group elements with their action, intended as right composition, on $L^2$ functions on $\Omega$. This is to say that $G$ can be equivalently represented as a subgroup $ G(V) \subset GL ( V )$ of the group of invertible linear maps from $V\coloneqq L^2(\Omega)$ to itself, by means of a group homomorphism $\rho:G \rightarrow G(V)$ defined by
\begin{equation}
\rho(g) \cdot a = \rho_g \cdot a := a \circ g^{-1} \, ,
\end{equation}
for any $g\in G$ and $a\in C^\infty(\Omega) \subset V$.  As the Lie algebra $\mf g$ of $G$ is the set of divergence-free vector fields $\bf v$ tangent to the boundary, the Lie algebra $\mf g(V)$ of $G(V)$ is the set of Lie derivatives ${\sf L}_{\bf v}: V \rightarrow V$, interpreted as unbounded operators, with respect to such vector fields. As a matter of fact, we have
\begin{equation}\label{eq:lie}
\der{}{s}\bigg|_{s=0} \rho_{g_s} \cdot a = \der{}{s}\bigg|_{s=0} (a \circ g^{-1}_s) = -{\sf L}_{\bf v} a \,,
\end{equation}
for any $a\in C^\infty(\Omega) \subset V$, and where $g_s$ is a curve on $G$ such that $g_0=e$ and $\dot{g_s}|_{s=0} = {\bf v}$.

In order to get a discrete version of this picture, we start by restricting $V$ to be the finite element space $V_h$.
Therefore, we consider the finite dimensional Lie group $GL(V_h)$ and we seek an appropriate subgroup $G_h(V_h)$ that in the limit approximates $G(V)$. Clearly, this cannot be accomplished by restricting $G(V)$ to $V_h$, as we have done for the general linear group $GL(V)$, since it would imply losing the group structure. However, one can still construct a subgroup $G_h(V_h)\subset GL(V_h)$ that approximates $G(V)$ and hence $G$. 

For all $g^h\in GL(V_h)$ and $a\in V_h$ we denote by $a \mapsto g^h a$ the action of $g^h$ on $a$. Then, $G_h(V_h)$ can be defined as follows.

\begin{definition} The Lie group $G_h(V_h)\subset GL(V_h)$ is the subgroup of the group of linear invertible operators from $V_h$ to itself, defined by the following properties: for all $g^h\in G_h(V_h)$,
\begin{align}\label{eq:prop1}
&(g^h a, g^h b)_{\Omega} = (a,b)_{\Omega} &&\forall\, a,b \in V_h\,, \\
 & g^h c = c  &&\forall\, c\in \mathbb{R}\, ,\label{eq:prop2}
\end{align}
where $c$ is considered as a constant function on $\Omega$.
\end{definition}

It is trivial to check that both Equation~\eqref{eq:prop1} and \eqref{eq:prop2} are verified in the continuous case. In particular for any $g\in \mr{Diff}(\Omega)$, we have $c \circ g^{-1} = c$. Moreover, as a consequence of volume preservation, for all $a,b \in V$ and $g\in \mr{Diff}_{\mr{vol}}(\Omega)$ ,
\begin{equation}
\int_{\Omega} \, (a\circ g^{-1})(b\circ g^{-1})\, \mr{vol} = 
\int_{\Omega} \, (a b)\circ g^{-1}\, \mr{vol} =
\int_{\Omega} \, \mr{det}(Dg^{-1}) a b \, \mr{vol} =
\int_{\Omega} \, a b \, \mr{vol} \,,
\end{equation}
since $g^{-1}(\Omega) = \Omega$.

The Lie algebra of $GL(V_h)$ is the set $\mf{gl}(V_h)$ of all linear operators from $V_h$ to itself. For all ${\sf A}^h\in \mf{gl}(V_h)$ and $a\in V_h$ we denote by $a \mapsto {\sf A}^h a$ the action of ${\sf A}^h$ on $a$. Then, the Lie algebra of $G_h(V_h)$ can be defined as follows. 

\begin{definition} The Lie algebra $\mf{g}_h(V_h)\subset \mf{gl}(V_h)$ associated to the Lie group $G_h(V_h)\subset GL(V_h)$ is the subalgebra of the linear operators from $V_h$ to itself defined by the following properties: for all ${\sf A}^h \in \mf{g}_h(V_h)$,
\begin{align}\label{eq:prop1a}
&({\sf A}^h a, b)_{\Omega} = -( a, {\sf A}^h b)_{\Omega}  &&\forall\, a,b \in V_h\,, \\
& {\sf A}^h c = 0  &&\forall\, c\in \mathbb{R}\, .\label{eq:prop2a}
\end{align}
\end{definition}

Note that Equation~\eqref{eq:prop1a} and \eqref{eq:prop2a} can be derived directly from Equation~\eqref{eq:prop1} and \eqref{eq:prop2}. In particular, Equation~\eqref{eq:prop1a} can be again related to volume preservation. Moreover, we can regard $\mf{g}_h(V_h)$ as an approximation $\mf{g}(V)$ so its element are to be thought as discrete Lie derivatives in view of Equation~\eqref{eq:lie}.

The collection of all the spaces $G_h(V_h)$, for $h$ arbitrarily
small, is not $G(V)$, even if the union of the spaces $V_h$ is dense
in $V$. This is because there exist elements of $GL(V)$ that satisfy
Equations~\eqref{eq:prop1} and \eqref{eq:prop2} but do not belong to $G(V)$ (for example, they might be representative of maps that are not even continuous). Therefore, before
defining the discrete system we must restrict ourselves to a space
smaller than $G_h(V_h)$.

The same considerations hold at the Lie algebra level, meaning that one needs to constrain the dynamics in a subset $S_h(V_h) \subset \mf g_h(V_h)$. Our definition for such a space is based on the discrete Lie derivatives of Definition~\ref{def:dLiehdiv}. In particular, we fix a finite element space ${\bf W}_h \subset {\bm H}(\mr{div},\Omega)$, and define
\begin{equation}\label{eq:W}
\Wh \coloneqq \{ {\bf u} \in {\bf W}_h \,:\, \mr{div}\,{\bf u} =0 \, ,\, {\bf u}\cdot{\bf n}_{\partial \Omega} =0\}.
\end{equation}
We denote by $\wh\coloneqq \Wh\cdot {\ed {\bf x}}$, i.e.\  $\wh$ is the set of one-forms $u = {\bf u} \cdot \ed {\bf x}$ with ${\bf u} \in \Wh$. Analogous definitions hold for $\Wh^r$ and $\wh^r$. Note that $\Wh^r$ is the same space if constructed using ${\bf W}_h^r = \bf{RT}_r(\mc T_h)$ or ${\bf W}_h^r = \bf{BDM}_r(\mc T_h)$ (see, e.g., Corollary 2.3.1 in \cite{Boffi13}).
The set of advection operators associated to $\Wh$ is given by
\begin{equation}
S_h(V_h) \coloneqq \{ {\sf X}_{\bf u}^h:V_h \rightarrow V_h \,~\text{with}~ c_f=0 \, : \, {\bf u} \in \Wh\}.
\end{equation}
More specifically, we define
\begin{equation}
S_h^s(V_h^r) \coloneqq \{ {\sf X}_{\bf u}^h:V_h^r \rightarrow V_h^r \,~\text{with}~ c_f=0 \, : \, {\bf u} \in \Wh^s\}.
\end{equation}
Note that setting $c_f=0$ we immediately ensure that $S_h(V_h)$ is a linear space; this can be directly verified using Definition~\ref{def:dLiehdiv}. 

\begin{lemma}\label{lem:subset}
$S_h(V_h) \subset \mathfrak{g}_h(V_h)$.
\end{lemma}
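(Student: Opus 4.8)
The plan is to verify that every operator in $S_h(V_h)$ satisfies the two defining conditions of $\mathfrak{g}_h(V_h)$, namely the discrete skew-symmetry \eqref{eq:prop1a} and the annihilation of constants \eqref{eq:prop2a}. Fix ${\bf u}\in\Wh$, so that $\mr{div}\,{\bf u}=0$ and ${\bf u}\cdot{\bf n}_{\partial\Omega}=0$, and take ${\sf A}^h={\sf X}_{\bf u}^h$ with $c_f=0$. Since by Definition~\ref{def:dLiehdiv} the operator ${\sf X}_{\bf u}^h$ coincides with ${\sf L}_{\bf u}^h$ when acting on scalar functions, it suffices to work with the centred scalar discretisation in the form \eqref{eq:clasc}.

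The annihilation of constants is immediate. For a constant function $c$ one has ${\sf L}_{\bf u}c={\bf u}\cdot{\bf{grad}}\,c=0$ by \eqref{eq:lie0}, and $[c]=0$ on every facet since $c$ is globally continuous. Substituting $a=c$ into the definition \eqref{eq:clas} with $c_f=0$ gives $({\sf X}_{\bf u}^h c,b)_\Omega=0$ for all $b\in V_h$, hence ${\sf X}_{\bf u}^h c=0$, which is \eqref{eq:prop2a}.

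For the skew-symmetry I would compute the symmetric combination $({\sf X}_{\bf u}^h a,b)_\Omega+(a,{\sf X}_{\bf u}^h b)_\Omega$ directly from \eqref{eq:clasc}, using symmetry of the $L^2$ product to rewrite the second term as $({\sf X}_{\bf u}^h b,a)_\Omega$, and show that the sum vanishes. In the volume integrals, expanding $\mr{div}({\bf u}\,b)={\bf u}\cdot{\bf{grad}}\,b$ (since $\mr{div}\,{\bf u}=0$) and adding the two contributions recombines the integrands into ${\bf u}\cdot{\bf{grad}}(ab)=\mr{div}({\bf u}\,ab)$, so the volume part equals $-\sum_K(a,b)_{\partial K,{\bf u}}$ after integration by parts on each element. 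In the facet integrals, the elementary identity $\{a\}[b]+\{b\}[a]=[ab]$ collapses the two centred flux terms into $\sum_{f\in\mc F^\circ}({\bf u}\cdot{\bf n}_f\,[ab],1)_f$.

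It then remains to reassemble the element-boundary integrals over shared facets, which I expect to be the only delicate point. Because ${\bf u}\in\Wh\subset{\bf W}_h$, Lemma~\ref{lem:hdivc} ensures that ${\bf u}\cdot{\bf n}_f$ is single-valued across each facet; the two element contributions to an interior facet $f$, carrying opposite normal orientations, therefore combine into exactly $-\sum_{f\in\mc F^\circ}({\bf u}\cdot{\bf n}_f\,[ab],1)_f$, while boundary facets contribute nothing because ${\bf u}\cdot{\bf n}_{\partial\Omega}=0$. This cancels the facet terms precisely, yielding $({\sf X}_{\bf u}^h a,b)_\Omega+(a,{\sf X}_{\bf u}^h b)_\Omega=0$, which is \eqref{eq:prop1a}. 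Together with the annihilation of constants, this places ${\sf X}_{\bf u}^h$ in $\mathfrak{g}_h(V_h)$; since ${\bf u}\in\Wh$ was arbitrary, $S_h(V_h)\subset\mathfrak{g}_h(V_h)$ follows.
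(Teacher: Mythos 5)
Your proof is correct and takes essentially the same route as the paper: both verify the two defining properties \eqref{eq:prop1a} and \eqref{eq:prop2a} directly, using elementwise integration by parts, $\mr{div}\,{\bf u}=0$, the single-valuedness of ${\bf u}\cdot{\bf n}_f$ from Lemma~\ref{lem:hdivc} (with boundary facets dropping out by tangency), and the jump--average product identity. The only cosmetic difference is that you start from the integrated-by-parts form \eqref{eq:clasc} and show the symmetrised sum $({\sf X}_{\bf u}^h a,b)_\Omega+(a,{\sf X}_{\bf u}^h b)_\Omega$ vanishes, whereas the paper starts from \eqref{eq:clas} and transforms $({\sf X}_{\bf u}^h a,b)_\Omega$ into $-(a,{\sf X}_{\bf u}^h b)_\Omega$; your identity $\{a\}[b]+\{b\}[a]=[ab]$ is just Equation~\eqref{eq:jid} rearranged.
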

\begin{proof}
We need to verify that the elements of $S_h(V_h)$ satisfy Equation~\eqref{eq:prop1a} and \eqref{eq:prop2a}.
By Definition~\ref{def:dLiehdiv} and Equation~\eqref{eq:clas}, for all $a,b\in V_h$,
\begin{equation}\label{eq:defX}
({\sf X}_{\bf u}^h a, b)_{\Omega}= \sum_K ({\sf L}_{\bf u} a, b)_{K}  - \sum_{f\in \mc F^\circ} ([a ], \{b\})_{f,{\bf u}}\,.
\end{equation}
The Lie derivative is just the directional derivative when applied to scalar functions, as shown in Equation~\eqref{eq:lie0}. Moreover, since $\mr{div}\, {\bf u}=0$, integration by parts yields
\begin{equation}\label{eq:intpart}
\begin{aligned}
\sum_K ({\sf L}_{\bf u} a, b)_{K}& = -\sum_K ( a, {\sf L}_{\bf u}b)_{K} + \sum_K (a,b)_{\partial K,{\bf u}}\\
&  = -\sum_K ( a, {\sf L}_{\bf u} b)_{K} + \sum_{f \in{\mc F^\circ}} \Big((a^+,b^+)_{f,{\bf u}} -(a^-,b^-)_{f,{\bf u}}\Big) \, .
\end{aligned}
\end{equation}
Note that the decomposition of the integrals on $\partial K$ is justified by the fact that each facet is shared by the boundary of two adjacent elements, therefore it appears twice in the sum but with two opposite normals (since ${\bf n}_{\partial K}$ is always oriented outwards). Moreover, we have the identity
\begin{equation}\label{eq:jid}
(a^+,  b^+)_{f,{\bf u}} -  (a^-,  b^-)_{f,{\bf u}} = (\{a\},  [b])_{f,{\bf u}} +  ([a],  \{b\})_{f,{\bf u}}\, .
\end{equation}
Combining Equation~\eqref{eq:defX}, \eqref{eq:intpart} and \eqref{eq:jid} gives
\begin{equation}
({\sf X}_{\bf u}^h a, b)_{\Omega} = -( a, {\sf X}_{\bf u}^h b)_{\Omega}\, ,
\end{equation}
which is the same as in Equation~\eqref{eq:prop1a}. Equation~\eqref{eq:prop2a} is immediately verified by observing that if we set $a=c$ in Equation~\eqref{eq:defX}, with $c \in \mathbb{R}$ being a constant function on $\Omega$, we obtain
\begin{equation}
({\sf X}_{\bf u}^h c, b)_{\Omega} = 0\, ,
\end{equation}
for all $b\in V_h$, and therefore ${\sf X}_{\bf u}^h c =0$.
\end{proof}
\begin{lemma}\label{lem:iso} Let $\Omega$ be a domain in $\mathbb{R}^n$ and let $\mc T_h$ be a triangulation on $\Omega$. For any $r\geq s$ and $r\geq1$, the spaces $S_h^s(V^r_h)$ and $\Wh^s$ are isomorphic. 
\end{lemma}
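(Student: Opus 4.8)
The plan is to realise the claimed isomorphism explicitly through the parametrisation map $\Phi\colon \Wh^s \to S_h^s(V_h^r)$, $\Phi({\bf u}) = {\sf X}_{\bf u}^h$. By Definition~\ref{def:dLiehdiv} with $c_f=0$, the action of ${\sf X}_{\bf u}^h$ on scalars is given by Equation~\eqref{eq:defX}, and both the volume term $\sum_K ({\sf L}_{\bf u} a, b)_K$ and the facet terms $([a],\{b\})_{f,{\bf u}}$ depend linearly on ${\bf u}$; hence $\Phi$ is linear (consistent with the observation that $c_f=0$ makes $S_h(V_h)$ a linear space), and it is surjective by the very definition of $S_h^s(V_h^r)$ as its image. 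Everything therefore reduces to showing that $\Phi$ is injective, i.e.\ that ${\sf X}_{\bf u}^h = 0$ forces ${\bf u} = 0$.

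To prove injectivity I would test the identity $({\sf X}_{\bf u}^h a, b)_\Omega = 0$ against carefully chosen $a,b \in V_h^r$. The key idea is to eliminate all facet contributions in one stroke by taking $a$ globally continuous: since $r\geq 1$, each Cartesian coordinate $x_i$ belongs to $V_h^r$, and being continuous it has vanishing jump $[x_i]=0$ across every interior facet, so every facet term in Equation~\eqref{eq:defX} drops out. Using ${\sf L}_{\bf u} x_i = {\bf u}\cdot{\bf{grad}}\,x_i = u_i$ from Equation~\eqref{eq:lie0}, the identity collapses to $(u_i, b)_\Omega = 0$ for all $b\in V_h^r$.

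It then remains to conclude $u_i=0$ from this orthogonality, and this is where the degree hypotheses enter. Because the divergence-free subspaces of $\bf{RT}_s(\mc T_h)$ and $\bf{BDM}_s(\mc T_h)$ coincide (Corollary 2.3.1 in \cite{Boffi13}), every ${\bf u}\in\Wh^s$ restricts on each element $K$ to a field in $(\mc{P}_s(K))^n$; in particular $u_i|_K \in \mc{P}_s(K)\subseteq \mc{P}_r(K) = V_h^r|_K$ since $r\geq s$. Choosing $b$ to agree with $u_i$ on a fixed element $K_0$ and to vanish elsewhere then yields $(u_i,u_i)_{K_0}=0$, so $u_i=0$ on $K_0$; as $K_0$ and $i$ are arbitrary, ${\bf u}=0$. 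This establishes injectivity, and combined with linearity and surjectivity shows that $\Phi$ is the desired isomorphism.

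The main obstacle — really the one genuinely clever step — is the choice of test functions. The temptation is to test element by element, but the antisymmetry of ${\sf X}_{\bf u}^h$ established in Lemma~\ref{lem:subset} makes same-element tests with $a=b$ vacuous, and purely localised tests leave one entangled with the facet integrals. Using the globally continuous coordinate functions $x_i$ sidesteps this entirely by annihilating the jumps, and the two hypotheses are precisely what make the argument close: $r\geq 1$ guarantees $x_i\in V_h^r$, while $r\geq s$ — via the $\bf{RT}$/$\bf{BDM}$ coincidence that confines the components of ${\bf u}$ to degree $s$ — guarantees that each component $u_i$ lies in the test space and may be paired with itself.
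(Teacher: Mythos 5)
Your proof is correct and takes essentially the same approach as the paper: both arguments hinge on applying ${\sf X}^h_{\bf u}$ to the coordinate functions $x_i \in V_h^r$ (using $r\geq 1$), whose vanishing jumps annihilate the facet terms, and on the fact that $u_i\in V_h^r$ (using $r\geq s$ together with Corollary 2.3.1 of \cite{Boffi13}) to recover ${\bf u}$ from the operator. The paper merely packages this computation as the statement that the map $\hat{\sf A}^h \coloneqq \sum_{i=1}^n({\sf A}^h x_i)\,{\bf e}_i$ restricts to a left inverse of ${\bf u}\mapsto {\sf X}^h_{\bf u}$, which is your injectivity argument in different clothing.
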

\begin{proof}
We assumed that $\{ {\bf e}_i\}_{i=1}^n$ is a global orthonormal reference frame on $\mathbb{R}^n$ with coordinates $\{x_i\}_{i=1}^n$. Then, since $r\geq 1$, $x_i\in V_h^r$ and we can define the map $~\hat{\cdot}: {\mf g}_h(V_h^r) \rightarrow (V_h^r)^n$ by
\begin{equation}
\hat{\sf A}^h \coloneqq \sum_{i=1}^n({\sf A}^h x_i)\, {\bf e}_i\,,
\end{equation}
for all ${\sf A}^h \in {\mf g}_h$. By a standard result of mixed finite element theory (see, e.g., Corollary 2.3.1 in \cite{Boffi13}), for any ${\bf u} \in \Wh^s$ we have that ${\bf u}|_K \in ({\mc P}_s(K))^n$ for any $K\in {\mc T}_h$. Then, since $r \geq s$, for all ${\bf u} \in \Wh^s$, $u_i = {\bf u} \cdot {\bf e}_i \in V_h^r$. Moreover, we can verify by direct calculation, i.e.\ applying Definition~\ref{def:dLiehdiv}, that for all ${\sf X}^h_{\bf u}\in S_h^s(V_h^r)$, ${\sf X}^h_{\bf u}\, x_i = u_i$. Hence the restriction of the $~\hat{\cdot}~$ map to $S_h^s(V^r_h)$ is the map ${\sf X}^h_{\bf u} \mapsto {\bf u}$ which is a surjection from $S_h^s(V_h^r)$ to $\Wh^s$. On the other hand the map ${\bf u} \mapsto {\sf X}^h_{\bf u}$ from $\Wh^s$ to $S_h^s(V_h^r)$ is also a surjection by definition of $S_h^s(V_h^r)$, hence the result. 
\end{proof}

\begin{figure}
 \[
 \begin{minipage}[t]{0.2\linewidth}
 \begin{center}
  \begin{tikzcd}[column sep=-0.5em, row sep=3em]
  G\arrow{d}{\rho}  \\
  G(V) \arrow[dashed]{d}  &\subset& GL(V) \arrow{d}{\renewcommand{\arraystretch}{0.7}\begin{array}{l}\text{Restriction}\\ \hspace{1pt} \text{to } V_h\end{array}}\\
  G_h(V_h)  &\subset & GL(V_h)
  \end{tikzcd}
  \end{center}
  \end{minipage}\hspace{6em} %
  \begin{minipage}[t]{0.35\linewidth} %
  \vspace{-0.5em}
  \begin{center} %
  \begin{tikzcd}[column sep=-0.5em, row sep=3em]
  \Wh&\mathfrak{g}\arrow{d}{\sf L} \arrow[dashed]{l} \\
  &\mathfrak{g}(V) \arrow[dashed]{d}  &\subset& \mathfrak{gl}(V) \arrow{d}{\renewcommand{\arraystretch}{0.7}\begin{array}{l}\text{Restriction}\\
  \hspace{1pt} \text{to } V_h\end{array}}\\
  \arrow{uu}{\mathlarger{~\hat{\cdot}~}} S_h(V_h)  \subset&\mathfrak{g}_h(V_h)  &\subset & \mathfrak{gl}(V_h)
  \end{tikzcd}
 \end{center}
 \end{minipage}
 \]
\caption{Diagrams representing the discretisation of the group $G\coloneqq\mr{Diff}_\mr{vol}(\Omega)$ and its Lie algebra. The dashed arrows represent relations that hold only in an approximate sense.}
\label{fig:diagrams}
\end{figure}
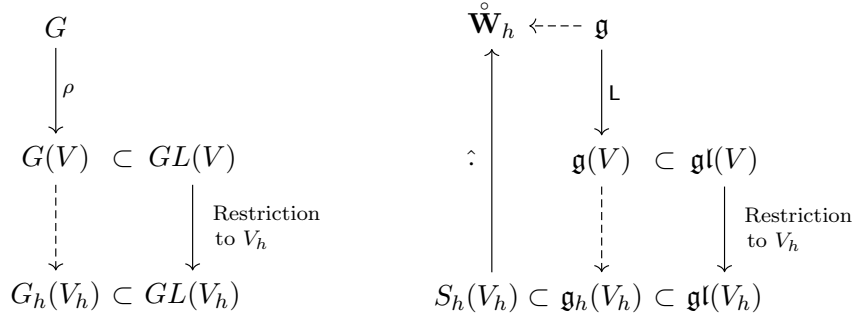

The different spaces introduced so far can be collected in the diagrams represented in Figure~\ref{fig:diagrams}. The definition of the $~\hat{\cdot}~$ map suggests the choice of a Lagrangian $l:\mathfrak{g}_h(V_h)\rightarrow \mathbb{R}$ given by
\begin{equation}
l({\sf A}^h) \coloneqq \frac{1}{2} \left(\hat{\sf A}^h,\hat{\sf A}^h \right)_{\Omega}\,,
\end{equation}
so that the restriction of $l$ to $S_h(V_h)$ is just the total kinetic energy, i.e.\ $l({\sf X}_{\bf u}^h) =  \|{\bf u}\|^2_{\Omega}/2$. 
Such a choice for the Lagrangian was already suggested in \cite{Pavlov1} as an alternative to the Lagrangian construction proposed therein based on dual grids.

In order to get a geometric picture analogous to the continuous case, we need a Lagrangian defined on the whole tangent space $TG_h(V_h)$ and approximating Equation~\eqref{eq:lag}. We achieve this by extending $l$ by right translations to get $L:TG_h(V_h)\rightarrow \mathbb{R}$ defined by
\begin{equation}\label{eq:disclag}
L(g^h, \dot{g}^h) \coloneqq l({\dot{g}}^h (g^{h})^{-1}) = \frac{1}{2} \int_\Omega \|\reallywidehat{{\dot{g}}^h (g^{h})^{-1}}  \|^2  \mr{vol}\, ,
\end{equation}
which is right invariant by construction. Analogously the space $S_h(V_h)$ can be extended by right translation to give a right invariant sub-bundle of $TG_h(V_h)$. To see how this is done, consider the right translation map $R_{g^h} : q^h \in G_h(V_h) \rightarrow q^hg^h \in G_h(V_h)$, and denote by $T_eR_{g^h} : {\sf A}^h \in {\mf g}_h(V_h) \rightarrow {\sf A}^h g^h \in T_{g^h}G_h(V_h)$ its tangent map at the identity (see \cite{Marsden10}, for more details). Then, the right invariant extension of $S_h(V_h)$ is obtained by collecting the spaces $T_eR_{g^h} S_h(V_h)$ for all $g^h\in G_h(V_h)$, where $T_eR_{g^h} S_h(V_h) \subset T_{g^h} G_h(V_h)$ is the space of linear operators which can be written as the composition ${\sf A}^h g^h$, for a given ${\sf A}^h\in S_h(V_h)$.

\begin{lemma} The Euler-Poincar\'e-d'Alembert equations relative to the right invariant Lagrangian $L$ with constraint $\dot{g}^h\in T_eR_{g^h} S_h(V_h)$ are given by: Find ${\sf A}^h\in S_h(V_h)$ such that
\begin{equation}\label{eq:epd}
\left(\der{}{t} \hat{\sf A}^h,\hat{\sf B}^h \right)_{\Omega} + \left (\hat{\sf A}^h,[\reallywidehat{{\sf A}^h, {\sf B}^h}] \right )_{\Omega} = 0,
\end{equation}
for all  ${\sf B}^h\in S_h(V_h)$, where $[{{\sf A}^h, {\sf B}^h}]\coloneqq {\sf A}^h {\sf B}^h - {{\sf B}^h {\sf A}^h}$ is the commutator of the linear operators ${\sf A}^h$ and ${\sf B}^h$.
\end{lemma}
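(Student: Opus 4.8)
The plan is to carry out the Euler--Poincar\'e--d'Alembert reduction of the right-invariant Lagrangian $L$ of \eqref{eq:disclag} under the right-invariant nonholonomic constraint distribution $\mc D_{g^h}\coloneqq T_eR_{g^h}S_h(V_h)$. I would begin from the Lagrange--d'Alembert principle on $TG_h(V_h)$: the physical curve $g^h(t)$ must satisfy the constraint $\dot g^h\in\mc D_{g^h}$ together with
\[
\delta\int_{t_0}^{t_1}L(g^h,\dot g^h)\,\ed t=0
\]
for all variations with $\delta g^h(t_0)=\delta g^h(t_1)=0$ whose virtual displacement $\eta\coloneqq\delta g^h\,(g^h)^{-1}$ is itself constrained to lie in $S_h(V_h)$. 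Since the constraint $\dot g^h\in T_eR_{g^h}S_h(V_h)$ is equivalent to $\xi\coloneqq\dot g^h\,(g^h)^{-1}\in S_h(V_h)$, right-invariance of $L$ reduces the action to $\int_{t_0}^{t_1}l(\xi)\,\ed t$ with $l(\xi)=\frac{1}{2}(\hat\xi,\hat\xi)_\Omega$.

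Next I would reduce the admissible variations. Using $\delta(g^h)^{-1}=-(g^h)^{-1}(\delta g^h)(g^h)^{-1}$ and the analogous identity for $\der{}{t}(g^h)^{-1}$, a short computation in $\mf{gl}(V_h)$ yields the right-invariant variation formula $\delta\xi=\der{}{t}\eta-[\xi,\eta]$, where $[\cdot,\cdot]$ is the operator commutator. Since $S_h(V_h)$ is a linear subspace of $\mf g_h(V_h)$, the reduced variation $\eta$ ranges freely over curves in $S_h(V_h)$ with $\eta(t_0)=\eta(t_1)=0$.

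I would then substitute into the reduced principle. As $l$ is the quadratic form associated with the \emph{linear} hat map, $\langle\frac{\delta l}{\delta\xi},\zeta\rangle=(\hat\xi,\hat\zeta)_\Omega$ for every $\zeta\in\mf g_h(V_h)$; moreover the hat map is linear and time-independent, hence commutes with $\der{}{t}$, so that $\widehat{\delta\xi}=\der{}{t}\hat\eta-[\reallywidehat{\xi,\eta}]$. The stationarity condition therefore reads
\[
0=\int_{t_0}^{t_1}\Big[\big(\hat\xi,\der{}{t}\hat\eta\big)_\Omega-\big(\hat\xi,[\reallywidehat{\xi,\eta}]\big)_\Omega\Big]\,\ed t.
\]
Integrating the first term by parts in time and using $\hat\eta(t_0)=\hat\eta(t_1)=0$ transfers the derivative onto $\hat\xi$ and removes the boundary contribution, leaving an integrand that is linear in $\eta$. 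The fundamental lemma of the calculus of variations then forces the pointwise identity to hold for every ${\sf B}^h\in S_h(V_h)$, which, after relabelling $\xi={\sf A}^h$ and $\eta={\sf B}^h$, is precisely \eqref{eq:epd}.

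The step requiring the most care is the treatment of the bracket term $[\reallywidehat{\xi,\eta}]$. The commutator is formed in the ambient algebra $\mf g_h(V_h)$, which is genuinely closed under $[\cdot,\cdot]$: both the skew-symmetry \eqref{eq:prop1a} and the annihilation of constants \eqref{eq:prop2a} are preserved by the commutator, so the hat map is legitimately defined on $[\xi,\eta]$. The essential subtlety is that $S_h(V_h)$ is \emph{not} in general a subalgebra, so $[\xi,\eta]$ typically leaves $S_h(V_h)$; this is exactly what makes the constraint nonholonomic and forces the Euler--Poincar\'e--d'Alembert form of the result rather than an ordinary Euler--Poincar\'e system on a subgroup. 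Getting the sign in $\delta\xi=\der{}{t}\eta-[\xi,\eta]$ correct (right- rather than left-invariance) and confirming that the hat of the commutator is well-defined in $\mf g_h(V_h)$ are therefore the two points I would verify most carefully.
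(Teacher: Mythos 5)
Your proposal is correct and takes essentially the same approach as the paper: the paper's proof simply cites the reduction theorem (Theorem 13.5.3 in Marsden--Ratiu, following Theorem 1 of Pavlov et al.), whose content --- right-invariant reduction with $\mathrm{ad}_{{\sf A}^h}{\sf B}^h=[{\sf A}^h,{\sf B}^h]$ and with both the solution and the variations constrained to $S_h(V_h)$ --- is exactly the computation you carry out explicitly. Your sign conventions (the right-trivialised variation formula $\delta\xi=\dot\eta-[\xi,\eta]$) and your handling of the nonholonomic constraint (variations $\eta$ confined to the linear space $S_h(V_h)$, with the commutator formed in the ambient algebra ${\mf g}_h(V_h)$, which is closed under it while $S_h(V_h)$ is not) match the paper's argument precisely.
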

\begin{proof}
The proof is completely analogous to that of Theorem 1 in \cite{Pavlov1}. In essence it is just an application of the reduction theorem (Theorem 13.5.3 in \cite{Marsden10}). In particular, note that proceeding as Equation~\eqref{eq:adlie} we easily get $\mr{ad}_{{\sf A}^h} {\sf B}^h = [{{\sf A}^h, {\sf B}^h}]$. Then, Equation~\eqref{eq:epd} is the finite-dimensional analogue of Equation~\eqref{eq:EPfluid}. The only difference is that we added a nonholonomic constraint on the velocity so that both the solution and the variations are constrained in the space $S_h(V_h)$. 
\end{proof}

\begin{theorem}\label{th:equivalence}
Under the hypotheses of Lemma~\ref{lem:iso}, the Euler-Poincar\'e-d'Alembert equations~\eqref{eq:epd} relative to the right invariant Lagrangian in Equation~\eqref{eq:disclag} with $S_h(V_h) = S^s_h(V^r_h)$ are equivalent to: Find $u \in \wh^s$ such that 
\begin{equation}\label{eq:disceuler}
\left (\dot{u} , v \right )_{\Omega} + \left ({\sf X}^h_{\bf u} u, v\right )_{\Omega} = 0,
\end{equation}
with $u = {\bf u} \cdot {\ed {\bf x}}$, for all ${v} \in \wh^s$.
\end{theorem}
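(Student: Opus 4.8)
The plan is to use the isomorphism of Lemma~\ref{lem:iso} to pull the abstract equation~\eqref{eq:epd} down to the concrete space $\wh^s$. Since $S^s_h(V^r_h)\cong\Wh^s$ via ${\sf X}^h_{\bf u}\mapsto{\bf u}$, every ${\sf A}^h\in S^s_h(V^r_h)$ can be written as ${\sf A}^h={\sf X}^h_{\bf u}$ for a unique ${\bf u}\in\Wh^s$, and likewise ${\sf B}^h={\sf X}^h_{\bf v}$; by construction of the $\hat{\cdot}$ map one has $\hat{\sf A}^h={\bf u}$ and $\hat{\sf B}^h={\bf v}$, and from the proof of Lemma~\ref{lem:iso}, ${\sf X}^h_{\bf w}x_i=w_i$ for each coordinate $x_i$. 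I would substitute these into the two terms of~\eqref{eq:epd} and show that the result is exactly~\eqref{eq:disceuler} tested against $v={\bf v}\cdot\ed{\bf x}$ as ${\bf v}$ ranges over $\Wh^s$. Both directions of the equivalence then follow because the substitution is a bijection and every manipulation is an identity valid for all test fields.

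The first term is immediate: $(\frac{\ed}{\ed t}\hat{\sf A}^h,\hat{\sf B}^h)_\Omega=(\dot{\bf u},{\bf v})_\Omega=(\dot u,v)_\Omega$ by~\eqref{eq:formpair}. For the second term I would compute the hat of the commutator componentwise. Applying $[{\sf A}^h,{\sf B}^h]={\sf A}^h{\sf B}^h-{\sf B}^h{\sf A}^h$ to $x_i$ and using ${\sf X}^h_{\bf u}x_i=u_i$, ${\sf X}^h_{\bf v}x_i=v_i$ (note $u_i,v_i\in V^r_h$ since $r\geq s$) gives the $i$-th component ${\sf X}^h_{\bf u}v_i-{\sf X}^h_{\bf v}u_i$, so that $(\hat{\sf A}^h,[\reallywidehat{{\sf A}^h,{\sf B}^h}])_\Omega=\sum_i(u_i,{\sf X}^h_{\bf u}v_i)_\Omega-\sum_i(u_i,{\sf X}^h_{\bf v}u_i)_\Omega$. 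The second sum vanishes: since ${\sf X}^h_{\bf v}\in S_h(V_h)\subset\mf g_h(V_h)$ (Lemma~\ref{lem:subset}), it is skew-adjoint by~\eqref{eq:prop1a}, and skew-adjointness forces $(u_i,{\sf X}^h_{\bf v}u_i)_\Omega=0$. This is the discrete reflection of the continuous fact that the self-advection term $\frac{1}{2}\int{\bf v}\cdot{\bf{grad}}\,|{\bf u}|^2$ integrates away for divergence-free ${\bf v}$ tangent to $\partial\Omega$.

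What remains, and what I expect to be the main obstacle, is to identify the surviving scalar-component pairing with the one-form advection operator, i.e.\ to prove the discrete analogue of the integration-by-parts identity~\eqref{eq:intpartlie} flagged in Remark~\ref{rem:epadv}: that $\sum_i(u_i,{\sf X}^h_{\bf u}v_i)_\Omega$ agrees with $({\sf X}^h_{\bf u}u,v)_\Omega$, where on the right ${\sf X}^h_{\bf u}$ is understood in its one-form incarnation~\eqref{eq:clas2}. I would establish this by expanding the left side through the scalar definition~\eqref{eq:defX} and the right side through~\eqref{eq:clas2} (both with $c_f=0$), integrating by parts element-by-element, and rewriting the volume integrands with the vector identity ${\bf{curl}}({\bf u}\times{\bf v})=({\bf v}\cdot{\bf{grad}}){\bf u}-({\bf u}\cdot{\bf{grad}}){\bf v}$, valid for the divergence-free fields in $\Wh^s$. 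The delicate part is the bookkeeping of the facet integrals: one must expand every jump of a product via $[ab]=\{a\}[b]+[a]\{b\}$ and apply the triple-product identity $({\bf A}\times{\bf B})\cdot({\bf C}\times{\bf D})=({\bf A}\cdot{\bf C})({\bf B}\cdot{\bf D})-({\bf A}\cdot{\bf D})({\bf B}\cdot{\bf C})$, after which the normal continuity of Lemma~\ref{lem:hdivc} ($[{\bf u}]\cdot{\bf n}_f=[{\bf v}]\cdot{\bf n}_f=0$) kills precisely the terms needed for the two expansions to coincide. Here the single-valuedness of ${\bf u}\cdot{\bf n}_f$ and ${\bf v}\cdot{\bf n}_f$ is essential; it is exactly the $\bm H(\ddiv)$-conformity that makes the Eulerian scheme~\eqref{eq:disceuler} emerge from the variational principle. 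I would track the signs carefully through these cancellations, since the bracket convention $\mr{ad}_{{\sf A}^h}{\sf B}^h=[{\sf A}^h,{\sf B}^h]$ together with the sign in~\eqref{eq:lie} determines the orientation of the advection term, and it is precisely the structure of~\eqref{eq:clas2} (as opposed to the naive extension~\eqref{eq:clas}) that is used here in an essential way.
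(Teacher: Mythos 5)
Your overall strategy is the one the paper follows: pull \eqref{eq:epd} down to $\Wh^s$ through the isomorphism of Lemma~\ref{lem:iso}, identify the two terms, and reduce everything to a discrete analogue of the integration-by-parts identity \eqref{eq:intpartlie}. Your use of skew-adjointness \eqref{eq:prop1a} (via Lemma~\ref{lem:subset}, valid here since $c_f=0$) to kill $\sum_i(u_i,{\sf X}^h_{\bf v}u_i)_\Omega$ is a legitimate shortcut that the paper does not take. However, the key identity you defer to the end is false as stated: one has
\[
\sum_{i=1}^n\left(u_i,{\sf X}^h_{\bf u}v_i\right)_\Omega \;=\; -\left({\sf X}^h_{\bf u}u,v\right)_\Omega ,
\]
with a minus sign. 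Indeed, expanding the left side via \eqref{eq:defX} gives $\sum_K({\bf u},({\bf u}\cdot\nabla){\bf v})_K-\sum_f([{\bf v}],\{{\bf u}\})_{f,{\bf u}}$, while expanding \eqref{eq:clas2} with ${\bf{curl}}({\bf u}\times{\bf v})=({\bf v}\cdot\nabla){\bf u}-({\bf u}\cdot\nabla){\bf v}$, integrating $({\bf u},({\bf v}\cdot\nabla){\bf u})_K=\int_K{\bf v}\cdot\nabla(|{\bf u}|^2/2)$ by parts, and applying the triple-product identity together with $[{\bf u}]\cdot{\bf n}_f=[{\bf v}]\cdot{\bf n}_f=0$ to the facet terms, shows that the two sides cancel exactly. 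This is precisely the discrete counterpart of \eqref{eq:intpartlie}, which you cite but misquote: the continuous identity reads $\int_\Omega m([{\bf u},{\bf v}])\,\mr{vol}=-\int_\Omega{\sf L}_{\bf u}m({\bf v})\,\mr{vol}$, with a minus sign. If you carried out the bookkeeping you describe, you would discover that the identity you need does not hold.

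The root of the problem is your identification ${\sf A}^h={\sf X}^h_{\bf u}$. Because the group acts by composition with the inverse, Equation~\eqref{eq:lie} shows that the algebra element corresponding to the spatial velocity ${\bf u}$, i.e.\ to $\dot{g}^h(g^h)^{-1}$, is $-{\sf X}^h_{\bf u}$; accordingly the paper sets ${\sf A}^h=-{\sf X}^h_{\bf u}$, ${\sf B}^h=-{\sf X}^h_{\bf v}$. The sign matters: flipping the signs of both ${\sf A}^h$ and ${\sf B}^h$ leaves the commutator $[{\sf A}^h,{\sf B}^h]$ and the term $(\tfrac{\ed}{\ed t}\hat{\sf A}^h,\hat{\sf B}^h)_\Omega$ unchanged, but reverses the sign of $(\hat{\sf A}^h,[\reallywidehat{{\sf A}^h,{\sf B}^h}])_\Omega$, so the relative sign of the two terms in \eqref{eq:epd} depends on which identification is used. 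With the paper's choice, the extra minus from ${\sf A}^h=-{\sf X}^h_{\bf u}$ cancels the minus in the discrete integration by parts and \eqref{eq:epd} becomes exactly \eqref{eq:disceuler}. With your choice, executed correctly, \eqref{eq:epd} becomes $(\dot{u},v)_\Omega-({\sf X}^h_{\bf u}u,v)_\Omega=0$, which is not \eqref{eq:disceuler} but its time reversal (the equation satisfied by $-{\bf u}$). Your proposal reaches the stated conclusion only because two sign errors cancel; to repair it, either adopt the identification ${\sf A}^h=-{\sf X}^h_{\bf u}$ from the start, or keep yours and add the explicit observation that ${\bf u}\mapsto-{\bf u}$ is a bijection between the solution sets of the two equations.
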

\begin{proof}
First of all, by Lemma~\ref{lem:iso} for each ${\sf A}^h\in S_h^s(V_h^r)$ and ${\sf B}^h\in S_h^s(V_h^r)$ there is a unique ${\bf u}\in \Wh^s$ and ${\bf v}\in \Wh^s$ such that ${\sf A}^h=-{\sf X}^h_{\bf u}$ and  ${\sf B}^h=-{\sf X}^h_{\bf v}$.
We now examine the two terms in Equation~\eqref{eq:epd} separately. For the first term, we have
\begin{equation}
\left(\der{}{t} \hat{\sf A}^h,\hat{\sf B}^h \right)_{\Omega} = \left (\dot{u} , v \right )_{\Omega} \,,
\end{equation}
by definition of the one-form pairing in Equation~\eqref{eq:formpair}. As for the second term,  we have 
\begin{equation}\label{eq:firsteq}
\begin{aligned}
\left (\hat{\sf A}^h,[\reallywidehat{{\sf A}^h, {\sf B}^h}] \right )_{\Omega} &= -\left (\hat{\sf X}_{\bf u}^h,[\reallywidehat{{\sf X}_{\bf u}^h, {\sf X}_{\bf v}^h}] \right )_{\Omega}\\
&= - \sum_{i=1}^n\left (u_i,{\sf X}_{\bf u}^h v_i - {\sf X}_{\bf v}^h u_i  \right )_{\Omega}\\
&= -\sum_{i=1}^n\sum_K (u_i, {\sf L}_{\bf u}v_i - {\sf L}_{\bf v} u_i)_{K}+\sum_{i=1}^n\sum_{f\in {\mc F}^\circ} \Big( (\{u_i\},[v_i])_{f,\bf u}
- (\{u_i\},[u_i])_{f,\bf v}\Big)\\
&= -\sum_K ({\bf u}, [{\bf u}, {\bf v}])_{K}+\sum_{i=1}^n\sum_{f\in {\mc F}^\circ} \Big( (\{u_i\},[v_i])_{f,\bf u}
- (\{u_i\},[u_i])_{f,\bf v} \Big)\,. 
\end{aligned}
\end{equation}
where we used the identity $\sum_{i=1}^n({\sf L}_{\bf u}v_i - {\sf L}_{\bf v} u_i) {\bf e}_i = [{\bf u} , {\bf v}]$, which is a consequence of Equations~\eqref{eq:lie0} and \eqref{eq:ljbrac}. As for the facet integrals, we first notice that
\begin{equation}\label{eq:facet}
\sum_{i=1}^n\sum_{f\in {\mc F}^\circ} \Big((\{u_i\},[v_i])_{f,\bf u}
- (\{u_i\},[u_i])_{f,\bf v}\Big) =
 \sum_{f\in {\mc F}^\circ} \Big((\{u\},[v])_{f,\bf u}
- (\{u\},[u])_{f,\bf v}\Big)\, .
\end{equation}
Moreover, since $\bf u$  has continuous normal component on the mesh facets, we can substitute   
\begin{equation}\label{eq:facet1}
(\{u\},[v])_{f,\bf u} = (\{u\},[v])_{f,\{{\bf u}\}} = ({\bf n}_f\times\{{\bf u}\},{\{{\bf u}\}} \times [{\bf v}])_{f}\, ,
\end{equation}
where  for the last equality we have used the same reasoning as in the proof of Lemma~\ref{lem:alt}. Similarly, for the second term in Equation~\eqref{eq:facet}
we have
\begin{equation}\label{eq:facet2}
(\{u\},[u])_{f,\bf v} = ({\bf n}_f\times\{{\bf u}\},{\{{\bf v}\}}\times [{\bf u}])_{f} = -({\bf n}_f\times\{{\bf u}\},{[\bf u]}\times \{{\bf v}\})_{f} \,.  
\end{equation}
Therefore, Equation~\eqref{eq:facet} becomes
\begin{equation}\label{eq:facetn}
\begin{aligned}
\sum_{i=1}^n\sum_{f\in {\mc F}^\circ}\Big((\{u_i\},[v_i])_{f,\bf u}
- (\{u_i\},[u_i])_{f,\bf v}\Big) &=
 \sum_{f\in {\mc F}^\circ} ({\bf n}_f\times\{{\bf u}\}, \{{\bf u}\}\times [{\bf v}]+ {[\bf u]}\times \{{\bf v}\})_{f}\\
 &=  \sum_{f\in {\mc F}^\circ} ({\bf n}_f\times\{{\bf u}\}, [{\bf u}\times {\bf v}])_{f}
 \, .
\end{aligned}
\end{equation}
The last equality in Equation~\eqref{eq:facetn} follows from the linearity of the cross product, i.e.\
\begin{equation}\label{eq:cross}
\begin{aligned}
{\{{\bf u}\}} \times [{\bf v}] + {[\bf u]}\times \{{\bf v}\} &=  ({\bf u}^++{\bf u}^-)/2\times ({\bf v}^+-{\bf v}^-) + ({\bf u}^+-{\bf u}^-)\times ({\bf v}^++{\bf v}^-)/2 \\ 
&=  {\bf u}^+\times {\bf v}^+- {\bf u}^-\times{\bf v}^-=[{\bf u}\times{\bf v}]\, .
\end{aligned}
\end{equation}
Inserting Equation~\eqref{eq:facetn} into Equation~\eqref{eq:firsteq} yields
\begin{equation}
\left (\hat{\sf A}^h,[\reallywidehat{{\sf A}^h, {\sf B}^h}] \right )_{\Omega} = \sum_K ({\bf u}, {\bf{curl}}({\bf u}\times {\bf v}))_{K}+ \sum_{f\in {\mc F}^\circ} ({\bf n}_f\times\{{\bf u}\}, [{\bf u}\times {\bf v}])_{f}\, ,
\end{equation}
where we used the identity $[{\bf u},{\bf v}] = -{\bf{curl}}({\bf u}\times {\bf v})$ valid for any divergence-free vector fields ${\bf u}$ and ${\bf v}$. Recalling that $c_f=0$, comparison with Equation~\eqref{eq:clas2} concludes the proof.
\end{proof}

\begin{proposition}
The centred dicretisation in Equation~\eqref{eq:disceuler}, i.e.\ the case $c_f=0$, on a simplicial triangulation coincides with the discontinuous Galerkin discretisation with centred fluxes proposed in \cite{Guzman16}, which can be written as follows: Find $({\bf u},p) \in {\bf W}_h^s \times V_h^k $, where $k=s$ if ${\bf W}_h^s = \bf{RT}_s(\mc T_h)$, or $k=s-1$ if ${\bf W}_h^s = \bf{BDM}_s(\mc T_h)$, such that 
\begin{equation}\label{eq:guzman}
\left \{
\begin{array}{l}
\displaystyle(\dot{\bf u},{\bf v})_\Omega - \sum_K( {\bf u},{\bf u} \cdot \nabla {\bf v})_K + \sum_{f \in\mc F^\circ} ({\bf u} \cdot {\bf n}_f \{{\bf u}\},[{\bf v}])_f -\sum_K(p,\mr{div}\, {\bf v})_K = 0\, \\
\displaystyle \sum_K(\mr{div}\, {\bf u}, q)_K = 0\, 
\end{array}
\right .
\end{equation}
for all $({\bf v} ,q) \in {\bf W}_h^s \times V_h^k$, with ${\bf u} \cdot {\bf n}_{\partial \Omega} = 0$ on $\partial \Omega$, and 
$\int_{\Omega} p \, \ed x = 0$.
\end{proposition}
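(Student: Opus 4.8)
The plan is to prove the equivalence in two stages: first identify the nonlinear advection terms of the two schemes when both arguments are divergence-free, and then reconcile the two function-space settings (constrained space versus mixed space with pressure) by a standard mixed-finite-element argument. Throughout, the $\bm H(\ddiv)$-conformity encoded in Lemma~\ref{lem:hdivc}, namely that ${\bf n}_f\cdot[{\bf u}]=0$ on interior facets, will be the essential structural input.

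For the advection term, I would fix ${\bf u},{\bf v}\in\Wh^s$ and set $c_f=0$. Since $\mr{div}\,{\bf v}=0$, Definition~\ref{def:dLiehdiv} reduces to
\[
({\sf X}_{\bf u}^h u,v)_\Omega = \sum_K({\bf u},{\bf curl}({\bf u}\times{\bf v}))_K + \sum_{f\in\mc F^\circ}({\bf n}_f\times\{{\bf u}\},[{\bf u}\times{\bf v}])_f.
\]
For the volume term I would apply the identity ${\bf curl}({\bf u}\times{\bf v})=({\bf v}\cdot\nabla){\bf u}-({\bf u}\cdot\nabla){\bf v}$, valid because both fields are divergence-free; the second piece reproduces exactly the Guzm\'an volume term $-\sum_K({\bf u},{\bf u}\cdot\nabla{\bf v})_K$, while the first piece $\sum_K({\bf u},({\bf v}\cdot\nabla){\bf u})_K$ I would integrate by parts elementwise, using $\mr{div}\,{\bf v}=0$ and $2\{{\bf u}\}\cdot[{\bf u}]=[|{\bf u}|^2]$, to convert it into the facet sum $\sum_{f\in\mc F^\circ}(\{{\bf u}\},[{\bf u}])_{f,{\bf v}}$ (the boundary facets drop out since ${\bf v}\cdot{\bf n}_{\partial\Omega}=0$).

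Next I would match the facet contributions pointwise on each $f\in\mc F^\circ$. Expanding $[{\bf u}\times{\bf v}]=\{{\bf u}\}\times[{\bf v}]+[{\bf u}]\times\{{\bf v}\}$ as in Equation~\eqref{eq:cross}, applying the quadruple-product identity used in Lemma~\ref{lem:alt}, and invoking Lemma~\ref{lem:hdivc} in the form ${\bf n}_f\cdot[{\bf u}]={\bf n}_f\cdot[{\bf v}]=0$ and ${\bf n}_f\cdot\{{\bf u}\}={\bf u}\cdot{\bf n}_f$, I would reduce $({\bf n}_f\times\{{\bf u}\})\cdot[{\bf u}\times{\bf v}]$ to $({\bf u}\cdot{\bf n}_f)(\{{\bf u}\}\cdot[{\bf v}])-({\bf v}\cdot{\bf n}_f)(\{{\bf u}\}\cdot[{\bf u}])$. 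Adding the integrated-by-parts term cancels the $({\bf v}\cdot{\bf n}_f)$ contribution and leaves precisely $\sum_{f\in\mc F^\circ}({\bf u}\cdot{\bf n}_f\{{\bf u}\},[{\bf v}])_f$, so that
\[
({\sf X}_{\bf u}^h u,v)_\Omega = -\sum_K({\bf u},{\bf u}\cdot\nabla{\bf v})_K+\sum_{f\in\mc F^\circ}({\bf u}\cdot{\bf n}_f\{{\bf u}\},[{\bf v}])_f,
\]
which is the advection part of Equation~\eqref{eq:guzman}.

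Finally I would reconcile the spaces. Given a solution $({\bf u},p)$ of~\eqref{eq:guzman}, the constraint equation together with $\mr{div}\,{\bf W}_h^s\subseteq V_h^k$ forces $\mr{div}\,{\bf u}=0$, so ${\bf u}\in\Wh^s$; restricting the momentum equation to ${\bf v}\in\Wh^s$ annihilates the pressure term and, by the computation above, yields~\eqref{eq:disceuler}. Conversely, any $u={\bf u}\cdot\ed{\bf x}\in\wh^s$ solving~\eqref{eq:disceuler} has ${\bf u}\in\Wh^s$ satisfying the constraint equation automatically, and the momentum residual is a linear functional on ${\bf W}_h^s$ vanishing on $\Wh^s=\ker(\mr{div}|_{{\bf W}_h^s})$; by the inf-sup stability of the Raviart--Thomas and Brezzi--Douglas--Marini pairs it is represented by a unique zero-mean $p\in V_h^k$ through ${\bf v}\mapsto\sum_K(p,\mr{div}\,{\bf v})_K$, recovering~\eqref{eq:guzman}. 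I expect the facet computation of the second and third paragraphs to be the main obstacle, as it is where the single-valued normal component is indispensable and where the centred-flux structure precisely emerges; the space and pressure reconciliation is then routine mixed-finite-element theory.
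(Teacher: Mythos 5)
Your proposal is correct and takes essentially the same approach as the paper: both start from the curl-form of $({\sf X}^h_{\bf u}u,v)_\Omega$, apply the identity ${\bf{curl}}({\bf u}\times{\bf v})=({\bf v}\cdot\nabla){\bf u}-({\bf u}\cdot\nabla){\bf v}$, expand the facet integrals via the quadruple-product identity and Lemma~\ref{lem:hdivc}, and cancel the $(\{{\bf u}\},[{\bf u}])_{f,{\bf v}}=({\bf v}\cdot{\bf n}_f,[|{\bf u}|^2/2])_f$ contribution by an elementwise integration by parts using $\mr{div}\,{\bf v}=0$ --- you merely run this last step in the opposite direction (volume onto facets) from the paper (facets into volume, cancelling $(\nabla{\bf u}^2/2,{\bf v})_K$). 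Your explicit inf-sup reconstruction of the zero-mean pressure is a detail the paper compresses into its one-line claim that restricting~\eqref{eq:guzman} to $\Wh^s$ yields an equivalent problem, but it is the same standard mixed-finite-element reconciliation.
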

\begin{proof}
First, we observe that restricting Equation~\eqref{eq:guzman} on the subspace $\Wh^s$ yields the equivalent problem: Find ${\bf u} \in \Wh^s$ such that 
\begin{equation}\label{eq:guzman1}
(\dot{\bf u},{\bf v})_\Omega - \sum_K ( {\bf u},{\bf u} \cdot \nabla {\bf v})_K + \sum_{f \in\mc F^\circ} ({\bf u} \cdot {\bf n}_f \{{\bf u}\},[{\bf v}])_f  = 0\, ,
\end{equation}
for all ${\bf v} \in \Wh^s$. On the other hand, we can write Equation~\eqref{eq:disceuler} in the following form 
\begin{equation}\label{eq:Lie}
(\dot{\bf u},{\bf v})_\Omega + \sum_K ({\bf u}, \mr{ \bf{ curl}}({\bf u} \times{\bf v}))_K + \sum_{f \in\mc F^\circ} ({\bf n}_f\times \{{\bf u}\},[{\bf u}\times {\bf v} ])_f = 0\,.
\end{equation}
We rewrite the volume integrals in Equation~\eqref{eq:Lie} as follows,
\begin{equation}\label{eq:Lie1}
({\bf u}, \mr{ \bf{ curl}}({\bf u} \times{\bf v}))_K =  ({\bf u}, -{\bf u} \cdot \nabla {\bf v}+ {\bf v} \cdot \nabla {\bf u})_K = -({\bf u}, {\bf u} \cdot \nabla {\bf v})_K +(\nabla {\bf u}^2/2, {\bf v})_K\, ,
\end{equation}
for any $K \in \mc {T}_h$. We rewrite the facet integrals in Equation~\eqref{eq:Lie} as follows,
\begin{equation}\label{eq:Lie2}
\begin{aligned}
\sum_{f \in\mc F^\circ} ({\bf n}_f\times \{{\bf u}\},[{\bf u}\times {\bf v} ])_f &=\sum_{f \in\mc F^\circ} \Big ( ({\bf u} \cdot{\bf n}_f  \{{\bf u}\},[ {\bf v} ])_f - ({\bf v} \cdot{\bf n}_f  \{{\bf u}\},[ {\bf u} ])_f \Big )\\
&=\sum_{f \in\mc F^\circ} \Big ( ({\bf u} \cdot{\bf n}_f  \{{\bf u}\},[ {\bf v} ])_f - ({\bf v} \cdot{\bf n}_f ,[ {\bf u}^2/2 ])_f \Big )\\
&=\sum_{f \in\mc F^\circ} ({\bf u} \cdot{\bf n}_f  \{{\bf u}\},[ {\bf v} ])_f - \sum_{K} \int_K\mr{ div} ( {\bf v}\, ({\bf u}^2/2 ))\ed x \\
&=\sum_{f \in\mc F^\circ} ({\bf u} \cdot{\bf n}_f  \{{\bf u}\},[ {\bf v} ])_f - \sum_{K}({\bf v}, \nabla({\bf u}^2/2 ))_K\,,
\end{aligned}
\end{equation}
where we used Equation~\eqref{eq:facetn} for the equality in the first line.
Inserting Equation~\eqref{eq:Lie1} and \eqref{eq:Lie2} into Equation~\eqref{eq:Lie} gives the equivalence of the two algorithms.
\end{proof}

\subsection{Upwind scheme}\label{sec:upwind}

In this section, we study the upwind version of the scheme in Equation~\eqref{eq:disceuler}. In particular, we extend the discussion of the previous section to the upwind case, with the aim of preserving the main features of the variational derivation.

Including upwinding in the variational framework described above is not straightforward. This is because, if we define $S_h(V_h)$ by 
\begin{equation}
S_h(V_h)\coloneqq \{ {\sf X}_{\bf u}^h:V_h \rightarrow V_h \,~\text{with}~ c_f= \bar{c}_f({\bf u}) \, : \, {\bf u} \in \Wh\}\,,
\end{equation}
where $\bar{c}_f$ is a given function of $\bf u$, then $S_h(V_h)$ ceases to be a linear space, and furthermore $S_h(V_h)\nsubseteq \mathfrak{g}_h(V_h)$. This last issue can be solved easily by choosing a group larger than $G_h(V_h)$ as configuration space. More specifically, we need to allow for discrete diffeomorphisms which do not satisfy Equation~\eqref{eq:prop1}. We call the new configuration space $\tilde{G}_h(V_h)$ and we define it as follows.

\begin{definition} The Lie group $\tilde{G}_h(V_h)\subset GL(V_h)$ is the subgroup of the group of linear invertible operators from $V_h$ to itself, defined by the following property: for all $g^h\in \tilde{G}_h(V_h)$,
\begin{equation}\label{eq:prop1ex}
 g^h c = c  \qquad \forall\, c\in \mathbb{R}\,,
\end{equation}
where $c$ is considered as a constant function on $\Omega$.
\end{definition}

Denote with $\tilde{\mathfrak{g}}_h(V_h)$ the Lie algebra of $\tilde{G}_h(V_h)$. Then, by the same arguments as in the last section, it is easy to verify that, for any choice of $c_f$ in the definition of $S_h(V_h)$, $S_h(V_h)\subset \tilde{\mathfrak{g}}_h(V_h)$.
Next, we note that also Lemma~\ref{lem:iso} holds independently of the choice of $c_f$ in the definition of $S_h(V_h)$, and in particular, for fixed polynomial orders $r$ and $s$ with $r\geq1$ and $r\geq s$, if ${\sf A}^h \in S_h^s(V_h^r)$ then $\bf u\coloneqq \hat{\sf A}^h \in \Wh^s$. This observation leads to the following result.

\begin{proposition}[Upwinding]\label{prop:upwind} Under the hypotheses of Lemma~\ref{lem:iso}, the problem described by Equation~\eqref{eq:epd}, where $S_h(V_h) = S_h^s(V_h^r)$ is defined to be 
\begin{equation}\label{eq:newconstraint}
S_h^s(V_h^r)\coloneqq \{ {\sf X}_{\bf v}^h:V_h^r \rightarrow V_h^r \,~\text{with}~ c_f=  \bar{c}_f({\bf u}) \, : \, {\bf v} \in \Wh^s\}\,,
\end{equation}
with $\bf u\coloneqq \hat{\sf A}^h\in \Wh^s$, is equivalent to Equation~\eqref{eq:disceuler}, where ${\sf X}_{\bf u}^h$ is defined as in Definition~\ref{def:dLiehdiv} with $c_f=-{\bar{c}_f({\bf u})}$. {In particular, if we choose $\bar{c}_f({\bf u})=-{{\bf u} \cdot {\bf n}_f}/({2 |{\bf u} \cdot {\bf n}_f|})$, we obtain the upwind version of Equation~\eqref{eq:epd}}.
\end{proposition}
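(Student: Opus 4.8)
The plan is to follow the proof of Theorem~\ref{th:equivalence} almost verbatim, isolating the single new ingredient: the nonzero stabilisation coefficient now carried by the operators in $S_h^s(V_h^r)$. First I would invoke Lemma~\ref{lem:iso} --- which, as already observed, holds independently of $c_f$ --- to write ${\sf A}^h=-{\sf X}^h_{\bf u}$ and ${\sf B}^h=-{\sf X}^h_{\bf v}$ with ${\bf u},{\bf v}\in\Wh^s$, exactly as in Theorem~\ref{th:equivalence}; the time-derivative term then again reproduces $(\dot u,v)_\Omega$. The crucial structural observation --- and the only place the argument genuinely departs from the centred case --- is that, by construction of \eqref{eq:newconstraint}, \emph{both} ${\sf A}^h$ and ${\sf B}^h$ carry the same coefficient $c_f=\bar c_f({\bf u})$, fixed once and for all by the solution velocity ${\bf u}$ and not by the field in the subscript.

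I would then expand
\[
\left(\hat{\sf A}^h,[\reallywidehat{{\sf A}^h,{\sf B}^h}]\right)_\Omega = -\sum_{i=1}^n\left(u_i,\,{\sf X}^h_{\bf u}v_i-{\sf X}^h_{\bf v}u_i\right)_\Omega
\]
with the scalar formula \eqref{eq:clas}. Since \eqref{eq:clas} differs from its $c_f=0$ version \eqref{eq:defX} only in the jump--jump terms $([a],[b])_{f,c_f{\bm \beta}}$, the whole expression splits into the centred bracket plus an extra contribution. For the centred bracket I would quote Theorem~\ref{th:equivalence}, which already identifies it with $({\sf X}^h_{\bf u}u,v)_\Omega$ at $c_f=0$. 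Collecting the residual jump--jump terms and summing over $i$ to form vector contractions leaves the extra contribution
\[
-\sum_{f\in\mc F^\circ}\Big[(\bar c_f({\bf u})\,({\bf u}\cdot{\bf n}_f)\,[{\bf u}],[{\bf v}])_f-(\bar c_f({\bf u})\,({\bf v}\cdot{\bf n}_f)\,[{\bf u}],[{\bf u}])_f\Big].
\]

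The step I expect to be the main obstacle is showing that this extra contribution equals the $c_f$-facet term of \eqref{eq:clas2} \emph{with the coefficient negated}, namely $-\sum_f(\bar c_f({\bf u})\,{\bf n}_f\times[{\bf u}],[{\bf u}\times{\bf v}])_f$; this sign reversal is precisely what forces the operator in \eqref{eq:disceuler} to use $c_f=-\bar c_f({\bf u})$. I would prove it through the pointwise identity
\[
({\bf u}\cdot{\bf n}_f)\,[{\bf u}]\cdot[{\bf v}]-({\bf v}\cdot{\bf n}_f)\,|[{\bf u}]|^2=({\bf n}_f\times[{\bf u}])\cdot[{\bf u}\times{\bf v}],
\]
rewriting $[{\bf u}\times{\bf v}]=\{{\bf u}\}\times[{\bf v}]+[{\bf u}]\times\{{\bf v}\}$ as in \eqref{eq:cross} and then expanding with the vector quadruple-product identity of Lemma~\ref{lem:alt}. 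The unwanted cross terms cancel precisely because \emph{both} ${\bf u}$ and ${\bf v}$ belong to $\Wh^s\subset{\bm H}(\mr{div},\Omega)$: Lemma~\ref{lem:hdivc} supplies $[{\bf u}]\cdot{\bf n}_f=[{\bf v}]\cdot{\bf n}_f=0$ together with $\{{\bf u}\}\cdot{\bf n}_f={\bf u}\cdot{\bf n}_f$ and $\{{\bf v}\}\cdot{\bf n}_f={\bf v}\cdot{\bf n}_f$, so the reduction is exact and not merely formal. This is where the $\bm H(\mr{div})$-conformity of the test field, and not only of the advecting velocity, is essential.

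Putting the two parts together, \eqref{eq:epd} collapses to $(\dot u,v)_\Omega+({\sf X}^h_{\bf u}u,v)_\Omega=0$ with ${\sf X}^h_{\bf u}$ carrying $c_f=-\bar c_f({\bf u})$, which is \eqref{eq:disceuler}. Finally, for the specific choice $\bar c_f({\bf u})=-({\bf u}\cdot{\bf n}_f)/(2|{\bf u}\cdot{\bf n}_f|)$ the operator in \eqref{eq:disceuler} inherits $c_f=({\bf u}\cdot{\bf n}_f)/(2|{\bf u}\cdot{\bf n}_f|)$, which by Remark~\ref{rem:upwind0} is exactly the upwind flux, yielding the claimed upwind version of \eqref{eq:epd}.
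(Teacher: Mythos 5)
Your proposal is correct and is essentially the paper's own argument: the paper proves this proposition with the single line ``analogous to Theorem~\ref{th:equivalence}'', and what you have written is precisely that analogy carried out in full, with the only genuinely new ingredient (the jump--jump facet terms carried by the scalar operators) correctly isolated. In particular, your pointwise identity $({\bf u}\cdot{\bf n}_f)\,[{\bf u}]\cdot[{\bf v}]-({\bf v}\cdot{\bf n}_f)\,|[{\bf u}]|^2=({\bf n}_f\times[{\bf u}])\cdot[{\bf u}\times{\bf v}]$, justified via Lemma~\ref{lem:hdivc} and the quadruple-product identity of Lemma~\ref{lem:alt}, is exactly the mechanism (the same one the paper reuses later in Equation~\eqref{eq:upest}) that produces the sign reversal $c_f=-\bar c_f({\bf u})$ asserted in the statement.
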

\begin{proof}
The proof is analogous to the one of Theorem~\ref{th:equivalence}.
\end{proof}

\begin{remark}
Note that the upwind version of the scheme in Equation~\eqref{eq:disceuler} cannot be derived directly from Hamilton's principle because the constraint space in Equation~\eqref{eq:newconstraint} is dependent on the solution of the problem. However, in the next section, we will see that the equivalence between Equation~\eqref{eq:disceuler} and Equation~\eqref{eq:epd}, ensured by Proposition~\ref{prop:upwind}, is enough to maintain the main properties of the centred discretisation (i.e.\ energy conservation and a discrete version of Kelvin's circulation theorem) also when upwinding is introduced.
\end{remark}

\subsection{Energy conservation and discrete Kelvin's circulation theorem}\label{sec:dkel}

A consequence of using a nonholonomic constraint in our approach is that our discretisation does not preserve all the Casimirs, i.e.\ the conserved quantities, of the original system. This is because the discrete bracket induced by the advection term in Equation~\eqref{eq:disceuler} is only a quasi-Poisson bracket (it is only skew-symmetric and does not satisfy the Jacobi identity). Nonetheless, because of the variational derivation of the equations of motion, we are  still able to ensure energy conservation and derive a discrete version of Kelvin's circulation theorem, with or without upwinding. Energy conservation is established in the following proposition.

\begin{proposition}[Energy conservation/$L^2$ stability] \label{prop:stab} The discrete system in Equation~\eqref{eq:disceuler}, with or without upwinding, satisfies conservation of the total kinetic energy, i.e.\ $\ed{l}/\ed t=0$.
\end{proposition}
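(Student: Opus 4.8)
The plan is to test the equations of motion against the solution itself and let the skew-symmetry built into the variational structure do the work. Since the restriction of the Lagrangian to the constraint space is the kinetic energy, $l = \tfrac{1}{2}\|{\bf u}\|_\Omega^2 = \tfrac{1}{2}(u,u)_\Omega$, differentiating in time and using symmetry of the inner product gives $\ed l/\ed t = (\dot u, u)_\Omega$. It therefore suffices to show that the advection term in Equation~\eqref{eq:disceuler} annihilates when both of its arguments coincide, i.e.\ $({\sf X}^h_{\bf u} u, u)_\Omega = 0$; choosing $v = u$ in Equation~\eqref{eq:disceuler} then yields $\ed l/\ed t = 0$ at once.

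The cleanest way to see this cancellation is at the level of Equation~\eqref{eq:epd}. By Theorem~\ref{th:equivalence} in the centred case and Proposition~\ref{prop:upwind} in the upwind case, Equation~\eqref{eq:disceuler} is equivalent to the Euler--Poincar\'e--d'Alembert system~\eqref{eq:epd} under the identification ${\sf A}^h = -{\sf X}^h_{\bf u}$. Taking ${\sf B}^h = {\sf A}^h$ as the test operator, the nonlinear term becomes $(\hat{\sf A}^h, [\reallywidehat{{\sf A}^h, {\sf A}^h}])_\Omega$, which vanishes identically because the commutator of any linear operator with itself is zero, $[{\sf A}^h,{\sf A}^h] = {\sf A}^h{\sf A}^h - {\sf A}^h{\sf A}^h = 0$. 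What remains is $(\der{}{t}\hat{\sf A}^h, \hat{\sf A}^h)_\Omega = \ed l/\ed t = 0$. Crucially, this computation never refers to $c_f$, so it is completely insensitive to whether centred or upwind fluxes are used.

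To make the cancellation fully explicit (and independent of the abstract equivalence), I would verify $({\sf X}^h_{\bf u} u, u)_\Omega = 0$ directly from Equation~\eqref{eq:clas2} with $a = b = u$, so that ${\bf a} = {\bf b} = {\bf u}$. The divergence contribution drops since ${\bf u}\in\Wh^s$ is divergence-free, and every surviving term contains the factor ${\bf u}\times{\bf u} = 0$: the volume integrals carry ${\bf{curl}}({\bf u}\times{\bf u})$ and the facet integrals carry $[{\bf u}\times{\bf u}]$, both of which vanish. Again this holds for arbitrary $c_f$, covering the upwind scheme as well.

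The only point that genuinely requires care is the admissibility of the test function in the upwind case, where the constraint space $S_h^s(V_h^r)$ of Equation~\eqref{eq:newconstraint} depends on the solution ${\bf u} = \hat{\sf A}^h$ through $c_f = \bar c_f({\bf u})$. Since the solution operator ${\sf A}^h = -{\sf X}^h_{\bf u}$ corresponds to the choice ${\bf v} = {\bf u}\in\Wh^s$, it itself lies in this solution-dependent space --- equivalently, $u\in\wh^s$ is a legitimate choice of $v$ in Equation~\eqref{eq:disceuler} --- so the substitution ${\sf B}^h = {\sf A}^h$ (resp.\ $v = u$) is permissible and the argument closes unchanged. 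I expect this admissibility check to be the only real subtlety; the algebraic cancellation itself is immediate.
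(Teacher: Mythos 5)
Your proof is correct and takes essentially the same route as the paper: the paper's proof consists of setting $v=u$ in Equation~\eqref{eq:disceuler} and invoking the antisymmetric structure, which is exactly your cancellation $({\sf X}^h_{\bf u}u,u)_\Omega=0$. Your elaborations --- the commutator argument $[{\sf A}^h,{\sf A}^h]=0$ at the level of Equation~\eqref{eq:epd}, the pointwise vanishing of ${\bf u}\times{\bf u}$ in Equation~\eqref{eq:clas2} for arbitrary $c_f$, and the admissibility of $v=u$ in the solution-dependent upwind constraint space --- are all correct and simply make explicit what the paper leaves implicit.
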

\begin{proof}
This can be verified directly by setting $v=u$ in Equation~\eqref{eq:disceuler} and exploiting the antisymmetric structure of the equations.
\end{proof}

We now turn to derive a discrete Kelvin's circulation theorem. At the continuous level, the theorem states that for any closed loop $\Gamma$,
\begin{equation}\label{eq:kelvin}
\der{}{t} \int_{g_t \circ \Gamma} u = 0\,,
\end{equation}
where $g_t$ is the flow of ${\bf u}$, with $u= {\bf u}\cdot {\ed {\bf x}}$. 
In order to show in what sense Equation~\eqref{eq:kelvin} is verified at the discrete level, we will proceed as in \cite{Pavlov1} by replacing the integral in Equation~\eqref{eq:kelvin} by a pairing with currents, i.e.\ vector fields supported in a small region around a curve.  

A discrete current is a vector field ${\bf c}\in \Wh$ that is non-zero only on a closed loop of elements $K\in \mc T_h$ and such that the flux of ${\bf c}$ through adjacent elements of the loop is equal to 1. Clearly, with mesh refinement a discrete current approaches the notion of closed loop. In other words, if $\Gamma$ is a closed loop such that there exists a discrete current ${\bf c}$ with $\Gamma\in \overline{\mr{supp}({\bf c})}$, then
\begin{equation}
\langle u,{\bf c}\rangle \approx \int_{\Gamma} u \, .
\end{equation}
Let $g_t$ be the flow of $\bf u$ as defined by Equation~\eqref{eq:matvel} and \eqref{eq:spatial}. If both $g_t$ and a given discrete current $\bf c$ are smooth, then $\bf c$ is advected by $g_t$ via the map $ {\bf c} \mapsto (D g_t \, {\bf c}) \circ g_t^{-1}$, as it can be derived from Equation~\eqref{eq:adlie}.
Therefore, Kelvin's theorem can be reformulated using currents, by stating that
\begin{equation}\label{eq:kelvinc}
\der{}{t} \langle u, (D g_t \, {\bf c}) \circ g_t^{-1} \rangle  = 0\,,
\end{equation} 
for any smooth current $\bf c$. In the discrete setting $g_t$ can only be defined in weak sense, and in general it is not differentiable, because we only have ${\bf u} \in {\bm H}(\mr{div},\Omega)$. Nonetheless, we can still use the discrete flow map $g_t^h$ to reproduce an analogous of Equation~\eqref{eq:kelvinc}, as shown in the following proposition.

\begin{proposition}
Let ${\bf u} \in \Wh$, with $u={\bf u}\cdot {\ed {\bf x}}$, be a solution of the discrete Euler equations~\eqref{eq:disceuler}. Then, for all discrete currents ${\bf c}\in \Wh$
\begin{equation}\label{eq:kelvincd}
\der{}{t}\bigg|_{t=0}\langle u,  \reallywidehat{g^h_t{\sf X}^h_{\bf c} (g_t^h)^{-1}}  \rangle  = 0\,,
\end{equation} 
where $g_t^h\in G_h(V_h)$ {for the centred discretisation}, or $g_t^h\in \tilde{G}_h(V_h)$ {for the upwind discretisation}, is the discrete flow of $-{\sf X}_{\bf u}^h$, i.e.\ it satisfies $\dot{g}^h_t =- {\sf X}_{\bf u}^h g^h_t$ and $g^h_0 = e$. The result extends to any time $t$ by appropriately translating the definition of $g_t^h$ in time.
\end{proposition}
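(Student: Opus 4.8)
The plan is to mirror the continuous derivation around Equations~\eqref{eq:kelvin}--\eqref{eq:kelvinc}, replacing the flow and the advected current by their discrete counterparts and reducing everything to the weak form~\eqref{eq:epd}. Set $F(t) \coloneqq \langle u, \reallywidehat{g^h_t{\sf X}^h_{\bf c} (g_t^h)^{-1}} \rangle$, where $u={\bf u}\cdot{\ed {\bf x}}$ is the time-dependent solution. First I would use the identification~\eqref{eq:formpair} to write $F(t) = ({\bf u}, \reallywidehat{g^h_t{\sf X}^h_{\bf c} (g_t^h)^{-1}})_\Omega$ and differentiate by the product rule at $t=0$. Since $g^h_0 = e$, the advected current is $\reallywidehat{{\sf X}^h_{\bf c}} = {\bf c}$ at $t=0$, so the two contributions are $(\dot{\bf u}, {\bf c})_\Omega$ and $\big({\bf u}, \tfrac{\ed}{\ed t}\big|_{t=0}\reallywidehat{g^h_t{\sf X}^h_{\bf c} (g_t^h)^{-1}}\big)_\Omega$.

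The central step is to differentiate the conjugated operator $g^h_t{\sf X}^h_{\bf c} (g_t^h)^{-1}$. As $G_h(V_h)$ (and $\tilde G_h(V_h)$) is a subgroup of the matrix group $GL(V_h)$, this reduces to elementary matrix calculus: using $g^h_0 = e$, $\dot g^h_0 = -{\sf X}^h_{\bf u}$, and $\tfrac{\ed}{\ed t}\big|_{t=0}(g_t^h)^{-1} = {\sf X}^h_{\bf u}$, the derivative equals the commutator $[-{\sf X}^h_{\bf u}, {\sf X}^h_{\bf c}] = -[{\sf X}^h_{\bf u}, {\sf X}^h_{\bf c}]$, which is the discrete analogue of $\mr{ad}_{\bf u}{\bf c}$ in~\eqref{eq:adlie}. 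Since the $~\hat{\cdot}~$ map is linear it commutes with $\ed/\ed t$, whence $\tfrac{\ed}{\ed t}\big|_{t=0}\reallywidehat{g^h_t{\sf X}^h_{\bf c} (g_t^h)^{-1}} = -\reallywidehat{[{\sf X}^h_{\bf u}, {\sf X}^h_{\bf c}]}$. Collecting the two contributions yields
\[
\frac{\ed}{\ed t}\bigg|_{t=0} F(t) = (\dot{\bf u}, {\bf c})_\Omega - \big({\bf u}, \reallywidehat{[{\sf X}^h_{\bf u}, {\sf X}^h_{\bf c}]}\big)_\Omega.
\]

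To conclude I would recognise the right-hand side as precisely the left-hand side of~\eqref{eq:epd}, evaluated on the solution ${\sf A}^h = -{\sf X}^h_{\bf u}$ (the generator of $g^h_t$) and tested against ${\sf B}^h = -{\sf X}^h_{\bf c}$. Indeed, since the $~\hat{\cdot}~$ map sends ${\sf X}^h_{\bf u}$ to ${\bf u}$ and ${\sf X}^h_{\bf c}$ to ${\bf c}$ (Lemma~\ref{lem:iso}), one has $\hat{\sf A}^h = -{\bf u}$ and $\hat{\sf B}^h = -{\bf c}$, so $(\tfrac{\ed}{\ed t}\hat{\sf A}^h, \hat{\sf B}^h)_\Omega = (\dot{\bf u}, {\bf c})_\Omega$; moreover $[{\sf A}^h, {\sf B}^h] = [{\sf X}^h_{\bf u}, {\sf X}^h_{\bf c}]$ gives $(\hat{\sf A}^h, \reallywidehat{[{\sf A}^h, {\sf B}^h]})_\Omega = -({\bf u}, \reallywidehat{[{\sf X}^h_{\bf u}, {\sf X}^h_{\bf c}]})_\Omega$. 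Because ${\bf c}\in\Wh$ is divergence-free and tangent to $\partial\Omega$, the operator $-{\sf X}^h_{\bf c} = {\sf X}^h_{-\bf c}$ is an admissible test element of $S_h(V_h)$, so~\eqref{eq:epd} applies and the sum vanishes. Hence $F'(0)=0$, and the statement for arbitrary $t$ follows by the time-translation argument already indicated.

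For the upwind scheme I would run the identical computation with $g^h_t\in\tilde G_h(V_h)$ and $S_h(V_h)$ as in~\eqref{eq:newconstraint}: by Proposition~\ref{prop:upwind} the solution still satisfies~\eqref{eq:epd} in the required weak sense, and the commutator manipulation above is insensitive to the value of $c_f$. The main difficulty I foresee is not conceptual but bookkeeping: tracking the several sign conventions consistently (the minus sign in the generator $-{\sf X}^h_{\bf u}$, the signs produced by the $~\hat{\cdot}~$ map acting on $-{\sf X}^h_{\bf u}$ and $-{\sf X}^h_{\bf c}$, and the sign in $\tfrac{\ed}{\ed t}(g_t^h)^{-1}$), together with justifying that the discrete flow $g_t^h$ is differentiable at $t=0$ so that the matrix-calculus computation of the conjugation derivative is legitimate.
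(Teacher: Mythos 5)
Your proof is correct and takes essentially the same route as the paper's: product rule at $t=0$, the identification $\der{}{t}\big|_{t=0}\, g^h_t{\sf X}^h_{\bf c}(g^h_t)^{-1} = -[{\sf X}^h_{\bf u},{\sf X}^h_{\bf c}]$ (which the paper phrases through the $\mr{ad}$ operation with an auxiliary flow $q^h_s$ of $-{\sf X}^h_{\bf c}$, but computes identically), and finally the Euler--Poincar\'e--d'Alembert equations~\eqref{eq:epd} with ${\sf A}^h=-{\sf X}^h_{\bf u}$ and ${\sf B}^h=-{\sf X}^h_{\bf c}$. Your sign bookkeeping and the handling of the upwind case via Proposition~\ref{prop:upwind} also agree with the paper's argument.
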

\begin{proof}
{For the centred discretisation}, let $q_t^h\in G_h(V_h)$ be the discrete flow of $-{\sf X}^h_{\bf c}$ (or $q_t^h\in \tilde{G}_h(V_h)$, {for the upwind discretisation}), then
\begin{equation}
[{\sf X}^h_{\bf u},{\sf X}^h_{\bf c}]=\mr{ad}_{{\sf X}^h_{\bf u}}{\sf X}^h_{\bf c} = \der{}{t}\bigg|_{t=0}
\der{}{s}\bigg|_{s=0} g_t^h q_s^h (g_t^h)^{-1}  = -\der{}{t}\bigg|_{t=0} g^h_t {\sf X}^h_{\bf c} (g_t^h)^{-1}\,.
\end{equation}
Finally, by Equation~\eqref{eq:epd}, we have
\begin{equation}
\der{}{t}\bigg|_{t=0} \langle u,  \reallywidehat{g^h_t{\sf X}^h_{\bf c} (g_t^h)^{-1}}  \rangle  = \langle \dot{u}, {\bf c} \rangle|_{t=0} -\langle {u}, [\reallywidehat{{\sf X}^h_{\bf u},{\sf X}^h_{\bf c}}] \rangle|_{t=0} = 0
\,.
\end{equation} 
\end{proof}
\section{Convergence analysis} \label{sec:conv}
In this section we analyse the convergence properties of the semi-discrete system in Equation~\eqref{eq:disceuler}. In particular, as the centred discretisation, i.e.\ the case $c_f =0$, was studied in \cite{Guzman16}, we will concentrate on the upwind formulation, i.e.\ the case $c_f = {\bf u} \cdot {\bf n}_f/ (2 |{\bf u} \cdot {\bf n}_f|)$. For simplicity, we continue to restrict to the case $n=3$, although the results of this section extend to $n=2$. We keep the dependence on $n$ explicit when needed. 

We start with characterising the space ${\bf W}_h$ and its approximation properties. We will assume that $\mc T_h$ is quasi-uniform and shape regular and that $\Wh$ is constructed by applying Equation~\eqref{eq:W}, by setting ${\bf W}_h = {\bf{RT}}_{s}(\mc T_h)$ or ${\bf W}_h = {\bf{BDM}}_{s}(\mc T_h)$, i.e.\ the Raviart-Thomas or Brezzi-Douglas-Marini finite element spaces of order $s$ on ${\mc T}_h$. Then, we say that ${\bf W}_h$ and $\Wh$ are polynomial spaces of order $s$. In particular, as shown in \cite{Boffi13} (or in \cite{Arnold06} in the context of FEEC), there exists a projection operator ${\bf P}_h$ of smooth vector fields onto ${\bf W}_h$ such that
\begin{equation}\label{eq:est1}
\|{\bf u} - {\bf P}_h {\bf u}\|_{\Omega} \leq C h^{r}|{\bf u}|_{{\bm H}^{r}(\Omega)} \, ,
\end{equation}
for any vector field ${\bf u}\in {\bm H}^{r}(\Omega)$ and $1\leq r \leq s+1$. 

From now on we will denote by ${\bf u}$ the exact solution of the Euler equation, and by ${\bf u}_h\in \Wh$ the discrete solution obtained by solving Equation~\eqref{eq:disceuler}. The estimate for the centred flux scheme in \cite{Guzman16} was derived as follows. First, the error is decomposed using the triangle inequality
\begin{equation}
\|{\bf u} - {\bf u}_h\|_{\Omega}  \leq 
\|{\bf u} - {\bf P}_h {\bf u}\|_{\Omega} +
\|{\bf P}_h {\bf u} - {\bf u}_h\|_{\Omega} \,.
\end{equation}
The first term on the right-hand side is the approximation error, therefore for the convergence estimate is sufficient to obtain a bound on ${\bm \gamma}_h\coloneqq {\bf P}_h {\bf u} - {\bf u}_h$. We collect the results of \cite{Guzman16} in the following Lemma.

\begin{lemma}[Theorem  2.1 in \cite{Guzman16}] \label{lem:central} The error ${\bm \gamma}_h$ satisfies the following inequality
\begin{equation}\label{eq:derest0}
\der{}{t} \|{\bm \gamma}_h\|^2_\Omega \leq C_0 h^{2s}  + C_1 h^{2s+2} + C_2 \|{\bm \gamma}_h\|^2_\Omega\, ,
\end{equation}
where $C_0,C_1,C_2>0$ are constants depending on $\|{\bf u}\|_{{\bm W}^{1,\infty}(\Omega)}$, $\|{\bf u}\|_{{\bm H}^{r+1}(\Omega)}$, $\|\dot{\bf u}\|_{{\bm H}^{r+1}(\Omega)}$, but independent of $h$. 
Then, if the norms above are uniformly bounded for $ t\in [0,T]$, the following estimate holds
\begin{equation}\label{eq:errorc}
\|{\bf u} - \bf{u}_h \|_\Omega \leq C h^s \, ,
\end{equation}
for an appropriate constant $C>0$ independent of $h$, and for all $t\in [0,T]$.
\end{lemma}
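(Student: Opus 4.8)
The plan is to prove the differential inequality \eqref{eq:derest0} by a standard energy estimate for the error, and then to deduce \eqref{eq:errorc} from Gr\"onwall's inequality together with the triangle inequality and the approximation property \eqref{eq:est1}. Throughout I write $\bm{\eta} := {\bf u} - {\bf P}_h {\bf u}$ and $\bm{\gamma}_h = {\bf P}_h {\bf u} - {\bf u}_h$, so that ${\bf u} - {\bf u}_h = \bm{\eta} + \bm{\gamma}_h$. I would first note that, since the canonical $\bm{H}(\ddiv)$-projection commutes with the divergence and preserves the vanishing normal trace, ${\bf P}_h {\bf u}$ is itself divergence-free and tangent to $\partial\Omega$; hence $\bm{\gamma}_h \in \Wh^s$ is an admissible test function, and $\bm{\eta}$ is also divergence-free with single-valued normal component. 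Because the smooth exact solution has no interelement jumps and the test functions in $\Wh^s$ are exactly divergence-free and tangent to the boundary (so the pressure drops out), ${\bf u}$ satisfies the discrete weak form \eqref{eq:guzman1} \emph{exactly}. Writing $c_h({\bf w};{\bf a},{\bf b})$ for the trilinear convection form in \eqref{eq:guzman1}, subtracting the equations for ${\bf u}$ and ${\bf u}_h$ and testing with $\bm{\gamma}_h$ gives
\begin{equation}
\tfrac12 \der{}{t} \|\bm{\gamma}_h\|_\Omega^2 + \big( c_h({\bf u};{\bf u},\bm{\gamma}_h) - c_h({\bf u}_h;{\bf u}_h,\bm{\gamma}_h) \big) = -(\dot{\bm{\eta}},\bm{\gamma}_h)_\Omega\,.
\end{equation}

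The core of the argument is the nonlinear bracket. Using the linearity of $c_h$ in its first two arguments and ${\bf u}_h = {\bf u} - \bm{\eta} - \bm{\gamma}_h$, I would decompose
\begin{equation}
c_h({\bf u};{\bf u},\bm{\gamma}_h) - c_h({\bf u}_h;{\bf u}_h,\bm{\gamma}_h) = c_h(\bm{\eta}+\bm{\gamma}_h;{\bf u},\bm{\gamma}_h) + c_h({\bf u}_h;\bm{\eta},\bm{\gamma}_h) + c_h({\bf u}_h;\bm{\gamma}_h,\bm{\gamma}_h)\,.
\end{equation}
The last term vanishes by the antisymmetry of $c_h$ in its last two slots for divergence-free advecting fields with continuous normal component, which is precisely the identity underlying the energy conservation of Proposition~\ref{prop:stab} (and rests on Lemma~\ref{lem:hdivc}). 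The remaining terms are estimated by Cauchy--Schwarz and Young's inequalities, using antisymmetry to shift every derivative onto the smooth field ${\bf u}$ or onto the projection error $\bm{\eta}$ rather than onto $\bm{\gamma}_h$, and controlling the facet contributions with trace inequalities. By \eqref{eq:est1} and standard approximation theory, $\|\bm{\eta}\|_\Omega \lesssim h^{s+1}$, $\|\nabla\bm{\eta}\|_\Omega \lesssim h^{s}$ and $\|\dot{\bm{\eta}}\|_\Omega \lesssim h^{s+1}$. The term $c_h(\bm{\gamma}_h;{\bf u},\bm{\gamma}_h)$ is bounded by $C\|{\bf u}\|_{{\bm W}^{1,\infty}(\Omega)}\|\bm{\gamma}_h\|_\Omega^2$; the term $c_h({\bf u}_h;\bm{\eta},\bm{\gamma}_h)$, after splitting ${\bf u}_h = {\bf u} - \bm{\eta} - \bm{\gamma}_h$ and integrating by parts, yields the $h^{s}$ contribution $C\|{\bf u}\|_{{\bm W}^{1,\infty}(\Omega)}\,h^{s}\|\bm{\gamma}_h\|_\Omega$ (the loss of one power, and hence the suboptimality, comes from the factor $\|\nabla\bm{\eta}\|_\Omega \lesssim h^s$); and the data term obeys $|(\dot{\bm{\eta}},\bm{\gamma}_h)_\Omega| \lesssim h^{s+1}\|\dot{\bf u}\|_{{\bm H}^{r+1}(\Omega)}\|\bm{\gamma}_h\|_\Omega$. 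A final use of Young's inequality on the factors linear in $\|\bm{\gamma}_h\|_\Omega$ separates out the $h^{2s}$ and $h^{2s+2}$ constants and produces exactly \eqref{eq:derest0}.

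The step I expect to be the main obstacle is the genuinely cubic error term $c_h(\bm{\gamma}_h;\bm{\eta},\bm{\gamma}_h)$ that appears once ${\bf u}_h$ is split in $c_h({\bf u}_h;\bm{\eta},\bm{\gamma}_h)$: here the error advects the projection error and is tested against itself, so it cannot be bounded by $\|\bm{\gamma}_h\|_\Omega^2$ alone. I would control it with the inverse inequality $\|\bm{\gamma}_h\|_{{\bm L}^\infty(\Omega)} \lesssim h^{-n/2}\|\bm{\gamma}_h\|_\Omega$, which (using $\|\nabla\bm{\eta}\|_\Omega \lesssim h^{s}$) gives a bound of the form $C\,h^{s-n/2}\|\bm{\gamma}_h\|_\Omega^2$; this is absorbable into $C_2\|\bm{\gamma}_h\|_\Omega^2$ precisely when the polynomial degree $s$ dominates the dimension, which is the mechanism tying the admissible degree to $n$. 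Crucially, performing the splitting in this way keeps all constants dependent only on norms of the exact solution, so no a priori bound on ${\bf u}_h$ is needed.

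With \eqref{eq:derest0} established and the initial error set to zero by taking ${\bf u}_h(0) = {\bf P}_h {\bf u}(0)$, Gr\"onwall's inequality gives, for $t\in[0,T]$,
\begin{equation}
\|\bm{\gamma}_h(t)\|_\Omega^2 \leq \big(C_0 h^{2s} + C_1 h^{2s+2}\big)\frac{e^{C_2 t}-1}{C_2} \leq C^2 h^{2s}\,,
\end{equation}
under the assumed uniform boundedness of the exact-solution norms on $[0,T]$, whence $\|\bm{\gamma}_h\|_\Omega \leq C h^s$. The triangle inequality then closes the estimate: $\|{\bf u} - {\bf u}_h\|_\Omega \leq \|\bm{\eta}\|_\Omega + \|\bm{\gamma}_h\|_\Omega \lesssim h^{s+1} + h^s \lesssim h^s$, which is \eqref{eq:errorc}.
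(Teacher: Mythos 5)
Your overall architecture is sound and is, in structure, the same energy argument as the cited proof: note first that the paper itself does not prove this lemma --- it imports it verbatim from Theorem 2.1 of \cite{Guzman16} --- so the comparison is with that argument. Your key steps all check out: the canonical projection commutes with the divergence and preserves the vanishing normal trace, so $\bm{\gamma}_h\in\Wh^s$ is admissible; the smooth solution does satisfy \eqref{eq:guzman1} exactly (for smooth $\bf u$ the facet sums reassemble the elementwise integration by parts and the pressure pairs to zero against exactly divergence-free, normal-continuous test fields); $c_h({\bf u}_h;\bm{\gamma}_h,\bm{\gamma}_h)=0$ by the antisymmetry resting on Lemma~\ref{lem:hdivc}, which is indeed the mechanism behind Proposition~\ref{prop:stab}; and your bookkeeping correctly locates the $h^{2s}$ term in $\|\nabla\bm{\eta}\|_\Omega\lesssim h^s$ and the $h^{2s+2}$ term in $\|\dot{\bm{\eta}}\|_\Omega\lesssim h^{s+1}$, followed by Young and Gr\"onwall.

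The genuine gap is exactly where you predicted trouble: the cubic term $c_h(\bm{\gamma}_h;\bm{\eta},\bm{\gamma}_h)$. Your bound via the inverse inequality $\|\bm{\gamma}_h\|_{{\bm L}^\infty(\Omega)}\lesssim h^{-n/2}\|\bm{\gamma}_h\|_\Omega$ produces $C\,h^{s-n/2}\|\bm{\gamma}_h\|_\Omega^2$, and the prefactor is uniformly bounded in $h$ only when $s\geq n/2$. The lemma as stated claims $C_2$ independent of $h$ with no such restriction, and for $n=3$, $s=1$ your constant degenerates like $h^{-1/2}$; a bootstrap on $\|\bm{\gamma}_h\|_{{\bm L}^\infty(\Omega)}$ does not rescue this, since closing it requires the same inequality $s>n/2$. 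The standard repair --- and the reason no dimensional restriction appears in \cite{Guzman16} --- is to put the strong norm on the \emph{projection error} rather than on the discrete error: after the antisymmetry shift, estimate $\bigl|\sum_K(\bm{\gamma}_h,\bm{\gamma}_h\cdot\nabla\bm{\eta})_K\bigr|\leq \|\nabla\bm{\eta}\|_{{\bm L}^\infty(\Omega)}\|\bm{\gamma}_h\|_\Omega^2$ and control the facet piece through $\|[\bm{\eta}]\|_{{\bm L}^\infty}$ together with the trace inequality, using the ${\bm W}^{1,\infty}$ approximation property $\|\nabla\bm{\eta}\|_{{\bm L}^\infty(\Omega)}\lesssim h^s$ of the canonical projection for smooth $\bf u$. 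This gives $C\,h^s\|\bm{\gamma}_h\|_\Omega^2$, absorbable into $C_2\|\bm{\gamma}_h\|_\Omega^2$ for every $s\geq 0$ with $h$-independent constants (at the modest price of $L^\infty$-type regularity of $\bf u$, consistent with the smoothness assumed throughout). For this paper's purposes your restriction happens to be harmless, since Theorem~\ref{th:conv} anyway operates under $s>1+n/2$; but as a proof of the lemma as stated it is incomplete. One further minor point: you silently set ${\bf u}_h(0)={\bf P}_h{\bf u}(0)$; any initialisation with $\|\bm{\gamma}_h(0)\|_\Omega\lesssim h^s$ suffices and should be stated.
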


When upwinding is introduced, i.e.\ for the case $c_f ={\bf u} \cdot {\bf n}_f/ (2 |{\bf u} \cdot {\bf n}_f|)$, we just need to add the upwind contribution to the left-hand side of Equation~\eqref{eq:derest0} and proceed with the estimate. In particular, from the proof of Theorem 2.1 in \cite{Guzman16}, it can be easily verified  that, assuming the exact solution to be continuous, Equation~\eqref{eq:derest0} needs to be replaced by
\begin{equation}\label{eq:derest1}
\der{}{t}\| {\bm \gamma}_h\|_{\Omega}^2 - \sum_{f\in {\mc F}^\circ}(c_f\, {\bf n}_f\times[{\bf u}_h],[{\bf u}_h\times{\bm\gamma}_h])_f  \leq C_0 h^{2s}+ C_1 h^{2s+2} + C_2 \|{\bm \gamma}_h\|_{\Omega}^2 \,.
\end{equation} 
Proceeding as in Equation~\eqref{eq:firsteq}, we rewrite the upwind term as follows,
\begin{equation}\label{eq:upest}
\begin{aligned}
\sum_{f\in {\mc F}^\circ}(c_f\, {\bf n}_f\times[{\bf u}_h],[{\bf u}_h\times{\bm\gamma}_h])_f &= \sum_{f\in {\mc F}^\circ}(c_f\, {\bf n}_f\times[{\bf u}_h], \{{\bf u}_h\}\times[{\bm\gamma}_h]+[{\bf u}_h]\times\{{\bm\gamma}_h\})_f\\
&= \sum_{f\in {\mc F}^\circ} \Big( ([{\bf u}_h],[{\bm\gamma}_h])_{f,c_f {\bf u}_h}-([{\bf u}_h],[{\bf u}_h])_{f,c_f {\bm\gamma}_h} \Big)\\
&= \sum_{f\in {\mc F}^\circ} \Big( ([{\bf P}_h{\bf u } ],[{\bm\gamma}_h])_{f,c_f {\bf u}_h}- ([{\bm \gamma}_h],[{\bm\gamma}_h])_{f,c_f {\bf u}_h} -([{\bf u}_h],[{\bf u}_h])_{f,c_f {\bm\gamma}_h}\Big) \,.
\end{aligned}
\end{equation}
Reinserting Equation~\eqref{eq:upest} into Equation~\eqref{eq:derest1} gives
\begin{equation}\label{eq:derest2}
\begin{aligned}
   \der{}{t}\| {\bm \gamma}_h\|_{\Omega}^2  + \sum_{f\in {\mc F}^\circ} ([{\bm \gamma}_h],[{\bm\gamma}_h])_{f,c_f {\bf u}_h}  \leq&~ C_0 h^{2s}+C_1 h^{2s+2}+ C_2 \|{\bm \gamma}_h\|_{\Omega}^2\\&~
  + \sum_{f\in {\mc F}^\circ}([{\bf P}_h{\bf u } ],[{\bm\gamma}_h])_{f,c_f {\bf u}_h}-\sum_{f\in {\mc F}^\circ}([{\bf u}_h],[{\bf u}_h])_{f,c_f {\bm\gamma}_h} \, ,
\end{aligned}
\end{equation}
where the facet integrals on the left-hand side are always non-negative due to the definition of $c_f$. Define
\begin{equation}
I_1^{up} \coloneqq\bigg| \sum_{f\in {\mc F}^\circ}([{\bf P}_h{\bf u } ],[{\bm\gamma}_h])_{f,c_f {\bf u}_h}\bigg|\, ,\qquad I_2^{up} \coloneqq\bigg|\sum_{f\in {\mc F}^\circ}([{\bf u}_h],[{\bf u}_h])_{f,c_f {\bm\gamma}_h}\bigg| \, ,
\end{equation}
then
\begin{equation}\label{eq:derest3}
   \der{}{t}\| {\bm \gamma}_h\|_{\Omega}^2 + \sum_{f\in {\mc F}^\circ} ([{\bm \gamma}_h],[{\bm\gamma}_h])_{f,c_f {\bf u}_h} \leq C_0 h ^{2s}+ C_1 h^{2s+2}+ C_2 \|{\bm \gamma}_h\|_{\Omega}^2+I_1^{up}+I_2^{up} \, .
\end{equation}

\paragraph{Estimate for $I_1^{up}$}
Since $c_f {\bf u}_h\cdot {\bf n}_f = |{\bf u}_h \cdot {\bf n}_f|/2$, the following estimate holds,
\begin{equation}\label{eq:I1_est}
\begin{aligned}
I_1^{up}= \bigg|\sum_{f\in {\mc F}^\circ}([{\bf P}_h{\bf u } ],[{\bm\gamma}_h])_{f,c_f {\bf u}_h}\bigg| &\leq \frac{1}{2}\bigg| \sum_{f\in {\mc F}^\circ}([{\bf P}_h{\bf u } ],[{\bf P}_h{\bf u }])_{f,c_f {\bf u}_h} +  \sum_{f\in {\mc F}^\circ}([{\bm\gamma}_h ],[{\bm\gamma}_h])_{f,c_f {\bf u}_h}\bigg|\\
&\leq \frac{1}{2}\sum_{f\in {\mc F}^\circ}([{\bf P}_h{\bf u } -{\bf u} ],[{\bf P}_h{\bf u }-{\bf u}])_{f,c_f {\bf u}_h} + \frac{1}{2} \sum_{f\in {\mc F}^\circ}([{\bm\gamma}_h ],[{\bm\gamma}_h])_{f,c_f {\bf u}_h}\\
&\leq C_{tr} h^{-1} \|{\bf u}_h\|_{{\bm L}^\infty(\Omega)} \|{\bf P}_h{\bf u }-{\bf u}\|^2_\Omega +\frac{1}{2}\sum_{f\in {\mc F}^\circ}([{\bm\gamma}_h ],[{\bm\gamma}_h])_{f,c_f {\bf u}_h}\,,
\end{aligned}
\end{equation}
where we used the trace inequality $\|\cdot\|^2_{\partial K} \leq C_{tr} h^{-1} \|\cdot\|^2_{K}$, with $C_{tr}>0$ and independent of $h$.
We estimate the ${\bm L}^\infty$ norm of ${\bf u}_h$ as follows,
\begin{equation}\label{eq:uinf_est}
 \| {\bf u}_h\|_{{\bm L}^\infty(\Omega)}\leq \| {\bf u}\|_{{\bm L}^\infty(\Omega)} + C_{inv} h^{-n/2}(\|{\bf  u} - {\bf P}_h {\bf u}\|_{\Omega} +  \|{\bm \gamma}_h\|_{\Omega})\, ,
\end{equation}
where we used the inverse inequality $\|\cdot\|_{{\bm L}^{\infty}(\Omega)}\leq C_{inv} h^{-n/2}\|\cdot\|_\Omega$, with $C_{inv}>0$ and independent of $h$. Inserting Equation~\eqref{eq:uinf_est} into Equation~\eqref{eq:I1_est},and applying Young's inequality to the term involving $\|{\bm \gamma}_h\|_\Omega$, 
we obtain 
\begin{equation}\label{eq:I1}
I_1^{up}\leq C_3  (h^{2s+1}+h^{4s+2-n} + h^{3s+2-n/2}) +\frac{1}{2}  \|{\bm \gamma}_h\|^2_\Omega+\frac{1}{2}\sum_{f\in {\mc F}^\circ}([{\bm\gamma}_h ],[{\bm\gamma}_h])_{f,c_f {\bf u}_h}\,,
\end{equation}
for an appropriate constant $C_3>0$ independent of $h$.
\paragraph{Estimate for $I_2^{up}$} Proceeding as for $I_1^{up}$, we obtain
\begin{equation} \label{eq:I2}
\begin{aligned}
I_2^{up} =\bigg|\sum_{f\in {\mc F}^\circ}([{\bf u}_h],[{\bf u}_h])_{f,c_f {\bm\gamma}_h}\bigg|&\leq 2\bigg|\sum_{f\in {\mc F}^\circ}([{\bm \gamma}_h],[{\bm \gamma}_h])_{f,c_f {\bm\gamma}_h}\bigg| + 2\bigg|\sum_{f\in {\mc F}^\circ}([{\bf P}_h {\bf u} - {\bf u}],[{\bf P}_h {\bf u} - {\bf u}])_{f,c_f {\bm\gamma}_h}\bigg|\\
&\leq 4\,C_{tr}\,C_{inv}\,h^{-1-n/2}(\|{\bm \gamma}_h\|^3_\Omega +  \|{\bm \gamma}_h\|_{\Omega} \|{\bf P}_h{\bf u }-{\bf u}\|^2_\Omega)\\
&\leq C_{4}h^{-1-n/2}\|{\bm \gamma}_h\|^3_\Omega + 
C_{4}  h^{4s+2-n} + \frac{1}{2} \|{\bm \gamma}_h\|^2_\Omega\,,
\end{aligned}
\end{equation}
for an appropriate constant $C_4>0$ independent of $h$.
\paragraph{Final estimate} Assuming $s\geq 1$ we can neglect higher order terms in $h$ in the estimates above. In particular, inserting Equation~\eqref{eq:I1} and \eqref{eq:I2} into Equation~\eqref{eq:derest3}, we obtain  
\begin{equation}\label{eq:derest4}
  \der{}{t}\| {\bm \gamma}_h\|_{\Omega}^2 -\|{\bm \gamma}_h\|^2_\Omega + \sum_{f\in {\mc F}^\circ} ([{\bm \gamma}_h],[{\bm\gamma}_h])_{f,c_f {\bf u}_h} \leq C_5 h^{2s}+ C_5h^{-1-n/2}\|{\bm \gamma}_h\|^3_\Omega \, ,
\end{equation}
for a constant $C_5>0$ independent of $h$. We multiply both sides of Equation~\eqref{eq:derest4} by $e^{-t}$, and we let $q(t)\coloneqq e^{-t}\|{\bm \gamma}_h\|_{\Omega}^2 $, so we obtain 
\begin{equation}\label{eq:derest5}
  \der{}{t} q(t) + e^{-t}\sum_{f\in {\mc F}^\circ} ([{\bm \gamma}_h],[{\bm\gamma}_h])_{f,c_f {\bf u}_h} \leq e^{-t}C_5 h^{2s}+ e^{t/2} C_5 h^{-1-n/2}q(t)^{3/2} \, .
\end{equation}
Integrating from $0$ to $T$ gives
\begin{equation}\label{eq:derest6}
\begin{aligned}
  q(T) &\leq q(0) + (1-e^{-T}) C_5 h^{2s}+ C_5 h^{-1-n/2}\int_0^Te^{s/2}q(s)^{3/2} \ed s \\
  & \leq  C_6 h^{2s}+ C_6 h^{-1-n/2}\int_0^Te^{s/2}q(s)^{3/2} \ed s
  \,,
\end{aligned}
\end{equation}
where $C_6$ depends on $\|{\bf u}|_{t=0}\|_{{\bm H}^{r+1}(\Omega)}$ but not on $h$.
Then, the nonlinear generalisation of the Gronwall's inequality in \cite{Butler71} yields
\begin{equation}
q(T) \leq h^{2s} \left( \frac{1}{C_6^{1/2} } - 2C_6h^{s-1-n/2}(e^{T/2}-1) \right)^{-2}\,,
\end{equation}
with the condition
\begin{equation}
h^{s-1-n/2}< \frac{1}{2 C_6^{3/2} (e^{T/2}-1)} \,.
\end{equation}
If $s>1+n/2$ such a condition can be verified for any $T$, if $h$ is sufficiently small. Moreover, for 
\begin{equation}
h^{s-1-n/2}< \frac{1}{4 C_6^{3/2} (e^{T/2}-1)} \,,
\end{equation}
we get 
\begin{equation}
q(T) \leq  4C_6 h^{2s} \,,
\end{equation}
which proves the following theorem.

\begin{theorem}\label{th:conv} Under the same assumptions on $\bf u$ as in Lemma~\ref{lem:central}, 
the discrete solution ${\bf u}_h\in {\bm W}_h^s$, with $s>1+n/2$, obtained by solving the system in Equation~\eqref{eq:disceuler} with $c_f = {\bf u} \cdot {\bf n}_f/ (2 |{\bf u} \cdot {\bf n}_f|)$, satisfies
\begin{equation}
\|{\bf u} - {\bf u}_h\|_{\Omega} \leq C h^s\, ,
\end{equation} 
with $C>0$ dependent $T$, $\|{\bf u}|_{t=0}\|_{{\bm H}^{s+1}(\Omega)}$, $\|{\bf u}\|_{H^1([0,T],{\bm H}^{s+1}(\Omega))}$, $\|{\bf u}\|_{{\bm W}^{1,\infty}([0,T]\times \Omega)}$, but not on $h$.
\end{theorem}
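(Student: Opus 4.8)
The plan is to bound the total error through the triangle inequality $\|{\bf u}-{\bf u}_h\|_\Omega \le \|{\bf u}-{\bf P}_h{\bf u}\|_\Omega + \|{\bm\gamma}_h\|_\Omega$, where ${\bm\gamma}_h={\bf P}_h{\bf u}-{\bf u}_h$. The first (approximation) term is already $O(h^s)$ by Equation~\eqref{eq:est1} taken with $r=s$, so the entire task reduces to controlling the discrete error ${\bm\gamma}_h$. For this I would take as starting point the differential inequality established for the centred scheme in Lemma~\ref{lem:central} (Equation~\eqref{eq:derest0}) and account for the extra facet term that the upwind choice $c_f={\bf u}\cdot{\bf n}_f/(2|{\bf u}\cdot{\bf n}_f|)$ introduces into the energy balance, arriving at Equation~\eqref{eq:derest1}.

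The decisive structural observation is that, with this choice of $c_f$, one has $c_f{\bf u}_h\cdot{\bf n}_f=|{\bf u}_h\cdot{\bf n}_f|/2\ge 0$, so the upwind contribution $\sum_{f}([{\bm\gamma}_h],[{\bm\gamma}_h])_{f,c_f{\bf u}_h}$ is a nonnegative dissipation term that I can move to the left-hand side. Expanding ${\bf u}_h={\bf P}_h{\bf u}-{\bm\gamma}_h$ inside the remaining facet integrals splits the error contribution into the pieces $I_1^{up}$ and $I_2^{up}$. Each is estimated by a trace inequality ($\|\cdot\|^2_{\partial K}\le C_{tr}h^{-1}\|\cdot\|^2_K$) together with the projection bound~\eqref{eq:est1}, rewriting the jump of ${\bf P}_h{\bf u}$ as the jump of ${\bf P}_h{\bf u}-{\bf u}$ since ${\bf u}$ itself is continuous. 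Crucially, the part of $I_1^{up}$ proportional to $\sum_f([{\bm\gamma}_h],[{\bm\gamma}_h])_{f,c_f{\bf u}_h}$ is then absorbed back into the nonnegative dissipation on the left.

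To close the estimate I still need an $L^\infty$ bound on the discrete velocity, obtained by combining $\|{\bf u}\|_{{\bm L}^\infty(\Omega)}$ with an inverse inequality $\|\cdot\|_{{\bm L}^\infty(\Omega)}\le C_{inv}h^{-n/2}\|\cdot\|_\Omega$ applied to $-{\bm\gamma}_h={\bf u}_h-{\bf P}_h{\bf u}$, the projection error ${\bf P}_h{\bf u}-{\bf u}$ being controlled directly by~\eqref{eq:est1}. This is exactly what feeds the negative powers of $h$ and the cubic term $h^{-1-n/2}\|{\bm\gamma}_h\|_\Omega^3$ into the final differential inequality~\eqref{eq:derest4}. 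From there the strategy is standard in form but not in substance: multiply through by $e^{-t}$ to subsume the linear $\|{\bm\gamma}_h\|_\Omega^2$ contribution, set $q(t)=e^{-t}\|{\bm\gamma}_h\|_\Omega^2$, integrate in time, and invoke a \emph{nonlinear} Gronwall inequality~\cite{Butler71} to handle the $q^{3/2}$ nonlinearity.

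The main obstacle is precisely this cubic term. A linear Gronwall argument would yield a clean $O(h^s)$ bound unconditionally, but the nonlinearity of the advection operator, once combined with the inverse inequality needed for $\|{\bf u}_h\|_{{\bm L}^\infty(\Omega)}$, produces a genuinely superlinear term whose coefficient carries the factor $h^{-1-n/2}$. The nonlinear Gronwall bound is finite only under a smallness condition of the form $h^{s-1-n/2}<C(T)^{-1}$, and this can be met for arbitrarily small $h$ exactly when $s>1+n/2$; that is the origin of the dimensional restriction in the statement, and the reason the rate is suboptimal. Once the smallness condition holds I would conclude $q(T)\le 4C_6 h^{2s}$, hence $\|{\bm\gamma}_h\|_\Omega\le Ch^s$, and combine this with the approximation estimate to obtain the claimed bound uniformly on $[0,T]$.
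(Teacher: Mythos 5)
Your proposal is correct and follows essentially the same route as the paper's own proof: the triangle-inequality decomposition via ${\bf P}_h$, the extension of Lemma~\ref{lem:central} to Equation~\eqref{eq:derest1}, the splitting into $I_1^{up}$ and $I_2^{up}$ with absorption of the nonnegative dissipation term, the trace and inverse inequalities feeding the cubic term $h^{-1-n/2}\|{\bm\gamma}_h\|_\Omega^3$, and the nonlinear Gronwall inequality of \cite{Butler71} whose smallness condition produces the restriction $s>1+n/2$. Nothing essential is missing or different.
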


\begin{remark}
Theorem~\ref{th:conv} gives a sub-optimal rate of convergence for the upwind scheme, i.e.\ $c_f={{\bf u}_h \cdot {\bf n}_f}/({2|{\bf u}_h \cdot {\bf n}_f|})$, only for the case $s>1+n/2$. This result is rather unsatisfactory, since the numerical tests in the next section suggest that the scheme converges with the optimal rate $s+1$ for $s\geq 1$. Unfortunately, we were not able to prove this result analytically.
\end{remark}

\section{Numerical tests} \label{sec:num}
We now show some numerical results illustrating our analytical results.  All the tests presented here are computed in the domain $\Omega = [0,2\pi] \times [0,2\pi]$ with spatial dimension $n=2$ and global Cartesian coordinates $(x_1,x_2)$. 
We combine the semi-discrete scheme introduced in the previous section with the implicit midpoint time discretisation, with fixed time step $\Delta t$. 
More specifically, let $t^n \coloneqq n \Delta t$ for $n \geq 0$, and $u^n = {\bf u}^n \cdot \ed {\bf x} \coloneqq  u|_{t=t^n}$, then the fully discrete scheme is defined by: Given $u^n \in \wh$, find $u^{n+1}\in \wh$ such that
\begin{equation}\label{eq:disceulert}
\frac{1}{\Delta t}\left ( {u}^{n+1}-u^n , v \right )_{\Omega} + \left ({\sf X}^h_{\frac{{\bf u}^{n}+{\bf u}^{n+1}}{2 }} \frac{u^{n}+u^{n+1}}{2}, v\right )_{\Omega} = 0\,,
\end{equation}
for all $v \in \wh$. It is well known that the implicit midpoint rule preserves  the quadratic invariants of the semi-discrete system exactly. In our case kinetic energy is conserved via Proposition~\ref{prop:stab}. The tests of this section were performed using the Firedrake software suite \cite{Rathgeber2016}, which
allows for symbolic implementation of finite element problems of mixed type. The nonlinear system was solved using Newton's method, with LU factorisation for the inner linear system implemented using the PETSc library \cite{petsc-user-ref,petsc-efficient,Dalcin2011,Chaco95}.

We start by  examining the convergence rates with respect to a given manufactured solution. The manufactured solution solves a modified system of equations where an appropriate forcing is introduced so that we can study the approximation properties of our discretisation (although we cannot make conclusive remarks on stability). We pick as manufactured solution the Taylor-Green vortex \cite{Guzman16},
\begin{equation}
{\bf  u}(t,x_1,x_2) = \sin(x_1)\cos(x_2)e^{-2t/\sigma} {\bf e}_1 -\cos(x_1)\sin(x_2)e^{-2t/\sigma} {\bf e}_2 \, ,
\end{equation}
with $\Omega = [0,2\pi]\times [0,2\pi]$, $\sigma=100$ and $t\in [0,1]$. 
Table~\ref{tab:centup} shows the $L^2$-error and the convergence rates for both the centred and upwind scheme, for the case
${\bf W}_ h = {\bf{RT}}_s(\mc T_h)$, and time step $\Delta t = 10^{-2}$. The centred scheme was already analysed in \cite{Guzman16}, where it was pointed out that without upwinding the rate of convergence $s$ is sharp for $s=1$, whereas convergence rates higher than $s$ can be observed for $s=0$ and $s=2$. The form of upwinding proposed in this paper is different from the one of \cite{Guzman16}. However, it gives the same behaviour in terms of order of convergence. In particular, we notice that introducing upwinding yields the optimal convergence rate $s+1$ for $s\geq 1$.

\begin{table}
\begin{center}
\renewcommand{\arraystretch}{1.2} 
\begin{tabular}{ c c|c c|c c}
\hline \hline
\multirow{2}{*}{$s$} & \multirow{2}{*}{$h$} & \multicolumn{2}{|c|}{centred}& \multicolumn{2}{|c}{upwind}\\
& & error
& order& error
& order\\
\hline
\multirow{4}{*}{0} & 7.40e-1 & 2.84e-1  & & 4.01e-1&  \\
& 3.70e-1& 1.42e-1
 & 1.00 &  2.24e-1 & 0.84\\
& 2.47e-1& 9.50e-2
 & 1.00 & 1.58e-1
  & 0.87 \\
& 1.85e-1&  7.12e-2& 1.00&   1.22e-1& 0.89 \\
\hline
\multirow{4}{*}{1} & 7.40e-1 & 1.42e-1 & & 2.15e-2&  \\
& 3.70e-1 & 
 7.13e-2& 0.99 &
  5.38e-3
   & 1.99\\
& 2.47-1 & 
4.76e-2
& 1.00 & 2.39e-3
& 2.00\\
& 1.85e-1 & 3.57e-2& 1.00&   1.35e-3& 2.00\\
\hline
\multirow{4}{*}{2} & 7.40e-1 & 1.81e-3
& & 7.61e-4
 &   \\
& 3.70e-1&  2.09e-4  
& 3.11 & 9.02e-5
 & 3.08   \\
& 2.47e-1& 6.28e-5 & 2.97 &  2.59e-5 & 3.08  \\
& 1.85e-1& 
 2.69e-5& 2.96 & 1.07e-5&  3.07 \\
\hline
\end{tabular}
\end{center}
\caption{Comparison between centred ($c_f=0$) and upwind ($c_f = \frac{{\bf u}\cdot{\bf n}_f}{2|{\bf u}\cdot{\bf n}_f|}$) scheme in terms of the error $\|{\bf{u}}-\bf{u}_h\|_{\Omega}$ and the order of convergence, for the Taylor-Green vortex test case and ${\bf W}_h={\bf{RT}}_s(\mc T_h)$.}\label{tab:centup}
\end{table} 

In the following we will refer to the scheme of \cite{Guzman16} as the GSS scheme (by the initial of the authors), whereas we refer to the scheme proposed in this paper as the Lie derivative scheme. 

We now consider a double shear problem \cite{liu00,Guzman16} obtained by setting the initial velocity ${\bf u} = u_1{\bf e}_1 + u_2 {\bf e}_2$ as follows
\begin{equation}
u_1|_{t=0} \coloneqq \left \{ 
\begin{array}{ll} 
\tanh((x_2-\pi/2)/\rho) \quad & x_2 \leq \pi \\
\tanh(( 3\pi/2 -x_2)/\rho) \quad & x_2 > \pi 
\end{array}
\right. \, , \qquad
u_2|_{t=0} \coloneqq \delta \sin (x_1)\, ,
\end{equation}
where $\rho = \pi/15$, $\delta = 0.05$, and with periodic boundary conditions on $\Omega$. We use $\Delta t = 8/200$ as in the references above to integrate the solution in time. In Figures~\ref{fig:ensnap} to \ref{fig:ensnap2}, we plot the vorticity $\mr{rot}\, {\bf u}$ at $t=8$, for the centred scheme and the upwind GSS and Lie derivative schemes. The centred scheme produces oscillations in the vorticity field, as it was already observed in \cite{Guzman16}. The presence of oscillations is reflected in the growth in enstrophy $Z(t)\coloneqq \int_{\Omega } (\mr{rot}\, {\bf u})^2\, \ed {\bf x} $, which accumulates at small scales, see Figure~\ref{fig:enshist1}. As a matter of fact, unfortunately, our variational derivation does not imply enstrophy conservation, even though this is an invariant of the continuous system. On the other hand, the energy $K(t) \coloneqq  \int_{\Omega } \|{\bf u}\|^2\, \ed {\bf x}$ is exactly conserved by the scheme.
Furthermore, we observe from Figure~\ref{fig:enshist1} that with $h$ refinement the enstrophy at the final time does not seem to converge to its initial value, when $s=1$; whereas we do observe convergence for $s=2$. This is expected since the sub-optimal error estimates provided in \cite{Guzman16} are sharp for $s=1$, so we can only guarantee convergence of the vorticity in $L^2$ for $s\geq 2$ via an inverse estimate.

The vorticity fields for the upwind GSS and Lie derivative schemes are similar, and provide much better agreement to the reference solution \cite{liu00}. The enstrophy time evolution in Figure~\ref{fig:enshist2} indicates that the introduction of upwinding induces dissipation in enstrophy.  
However, the upwind Lie derivative schemes still preserves energy exactly, whereas the upwind GSS scheme also dissipates energy.

\begin{figure}
\noindent
\subfigure[$h=0.185$, $s=1$ ($\|\mr{rot}\, {\bf u}\|_{L^\infty(\Omega)}\approx 14.05$)]{\includegraphics[width=0.5\linewidth, trim= 80 50 20 50 ,clip]{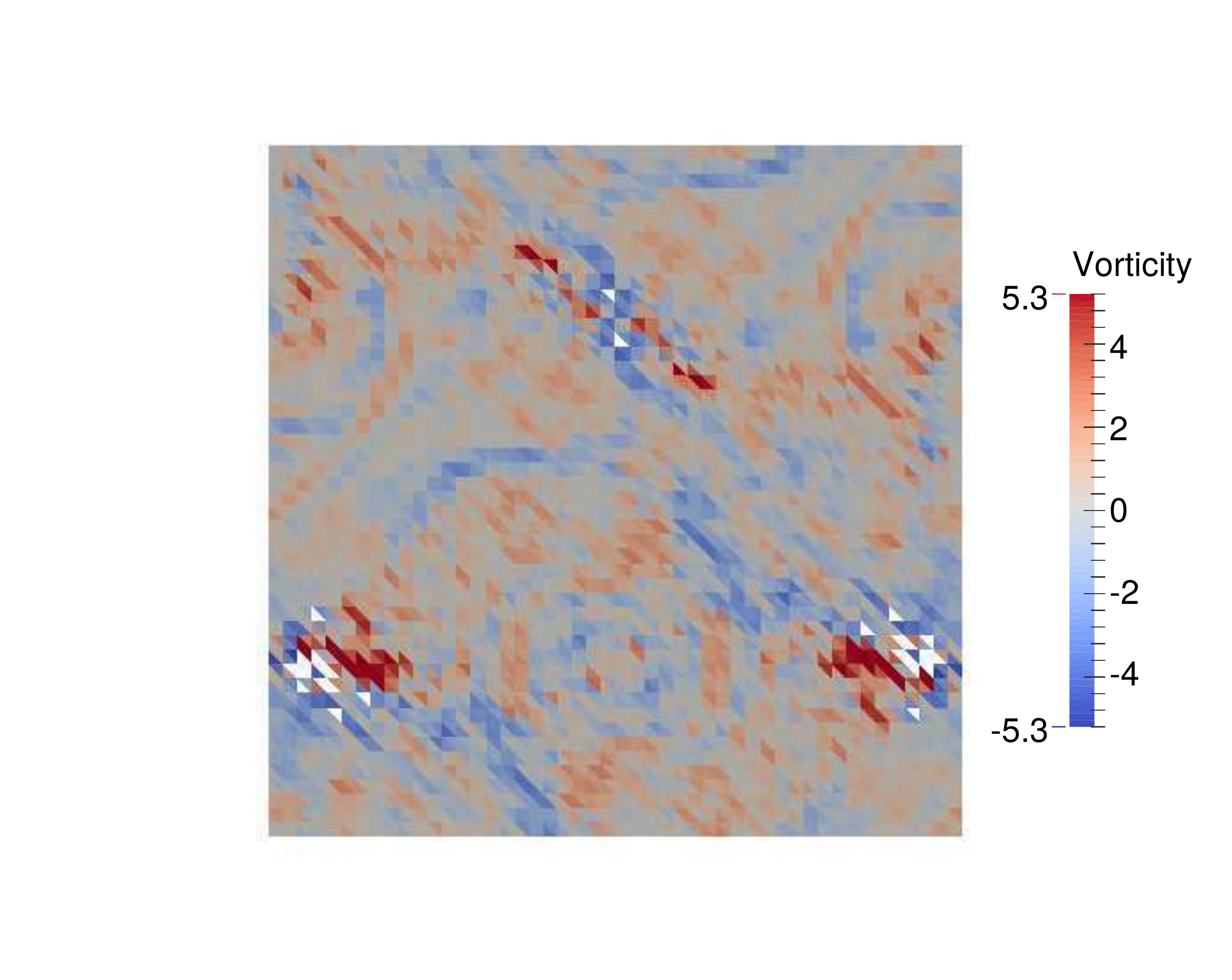}}
\subfigure[$h=0.185$, $s=2$ ($\|\mr{rot}\, {\bf u}\|_{L^\infty(\Omega)}\approx 11.63$)]{\includegraphics[width=0.5\linewidth, trim= 80 50 20 50 ,clip]{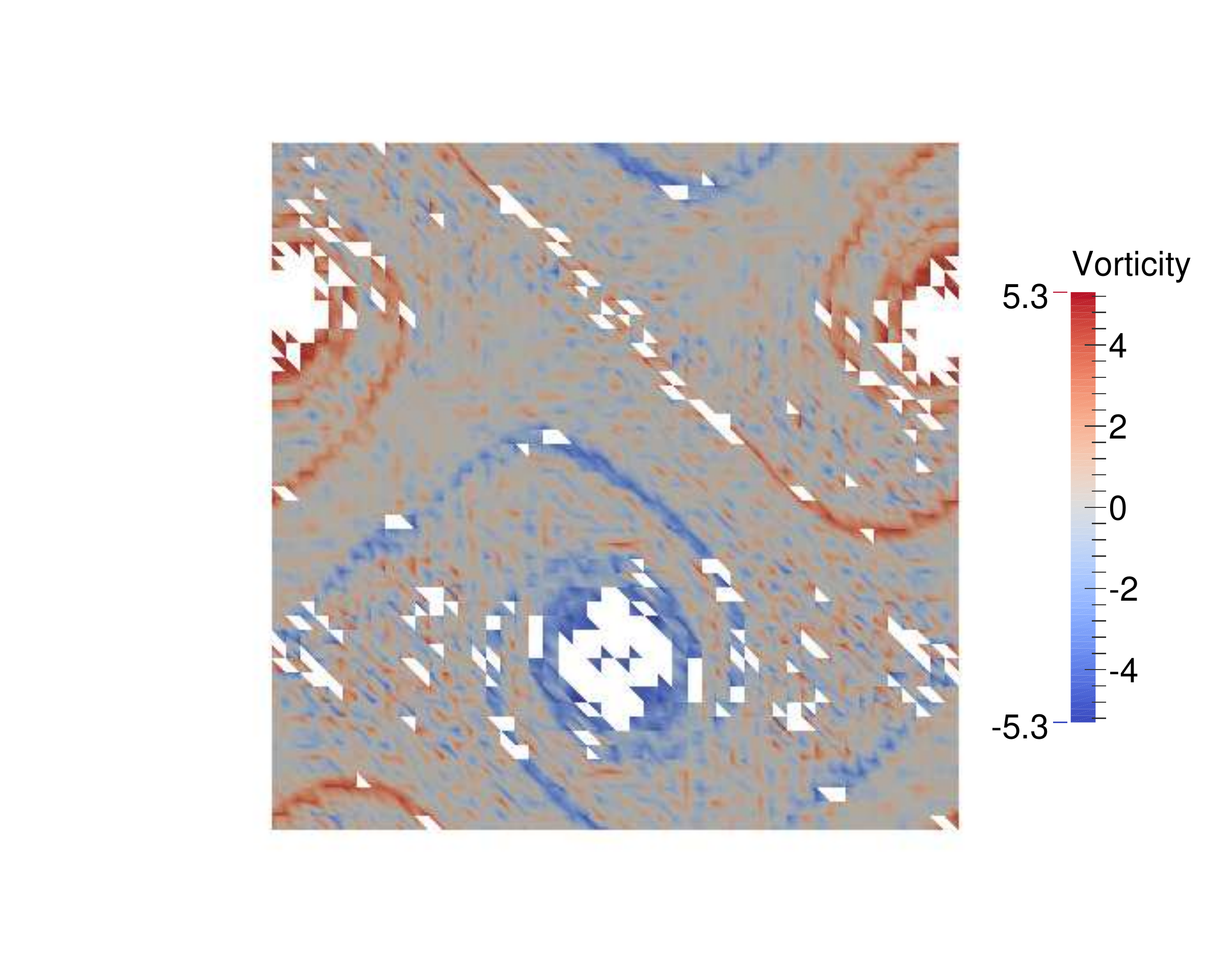}}
\subfigure[$h=0.135$, $s=1$ ($\|\mr{rot}\, {\bf u}\|_{L^\infty(\Omega)}\approx 23.86$)]{\includegraphics[width=0.5\linewidth, trim= 80 50 20 50 ,clip]{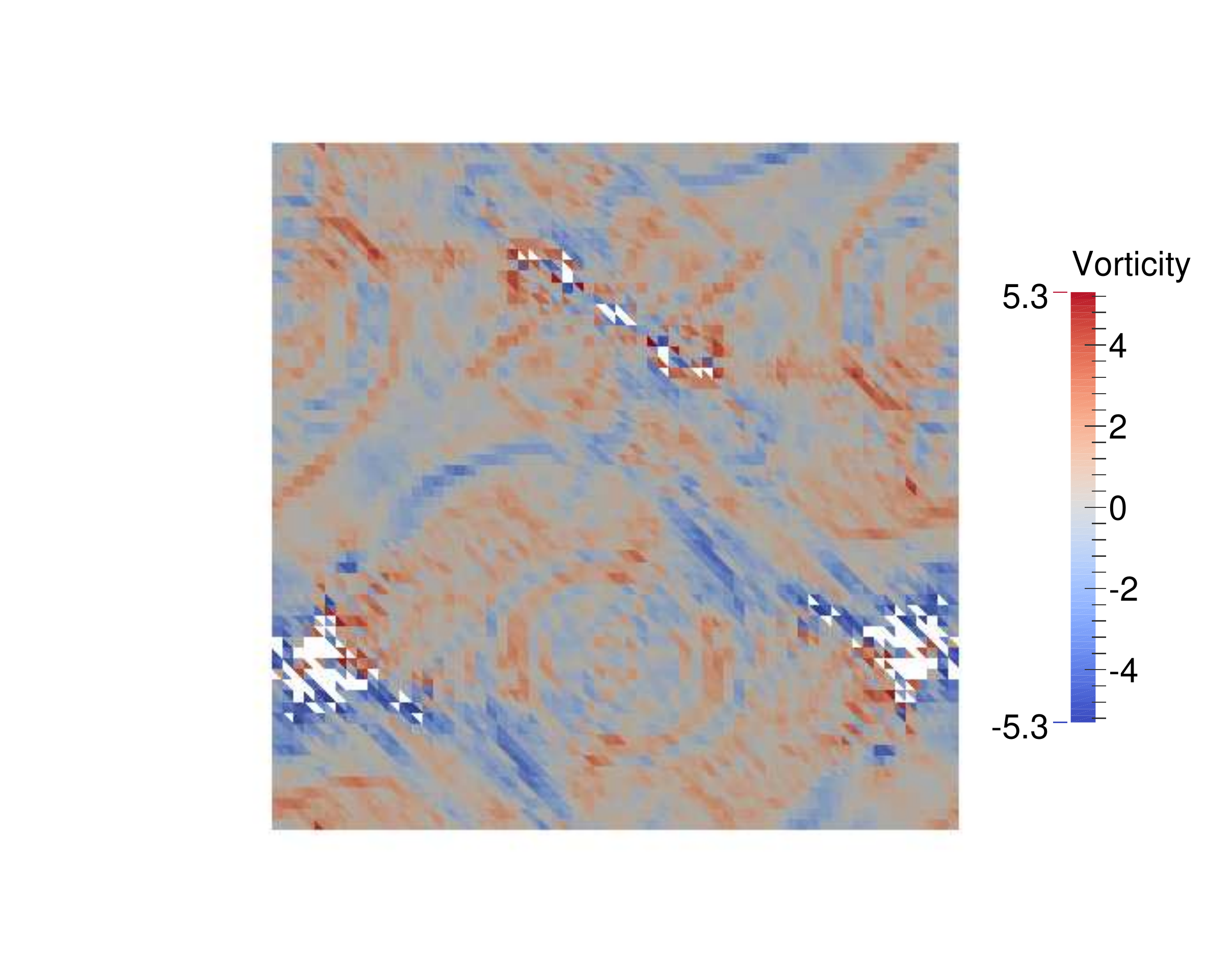}}
\subfigure[$h=0.139$, $s=2$ ($\|\mr{rot}\, {\bf u}\|_{L^\infty(\Omega)}\approx 11.41$)]{\includegraphics[width=0.5\linewidth, trim= 80 50 20 50 ,clip]{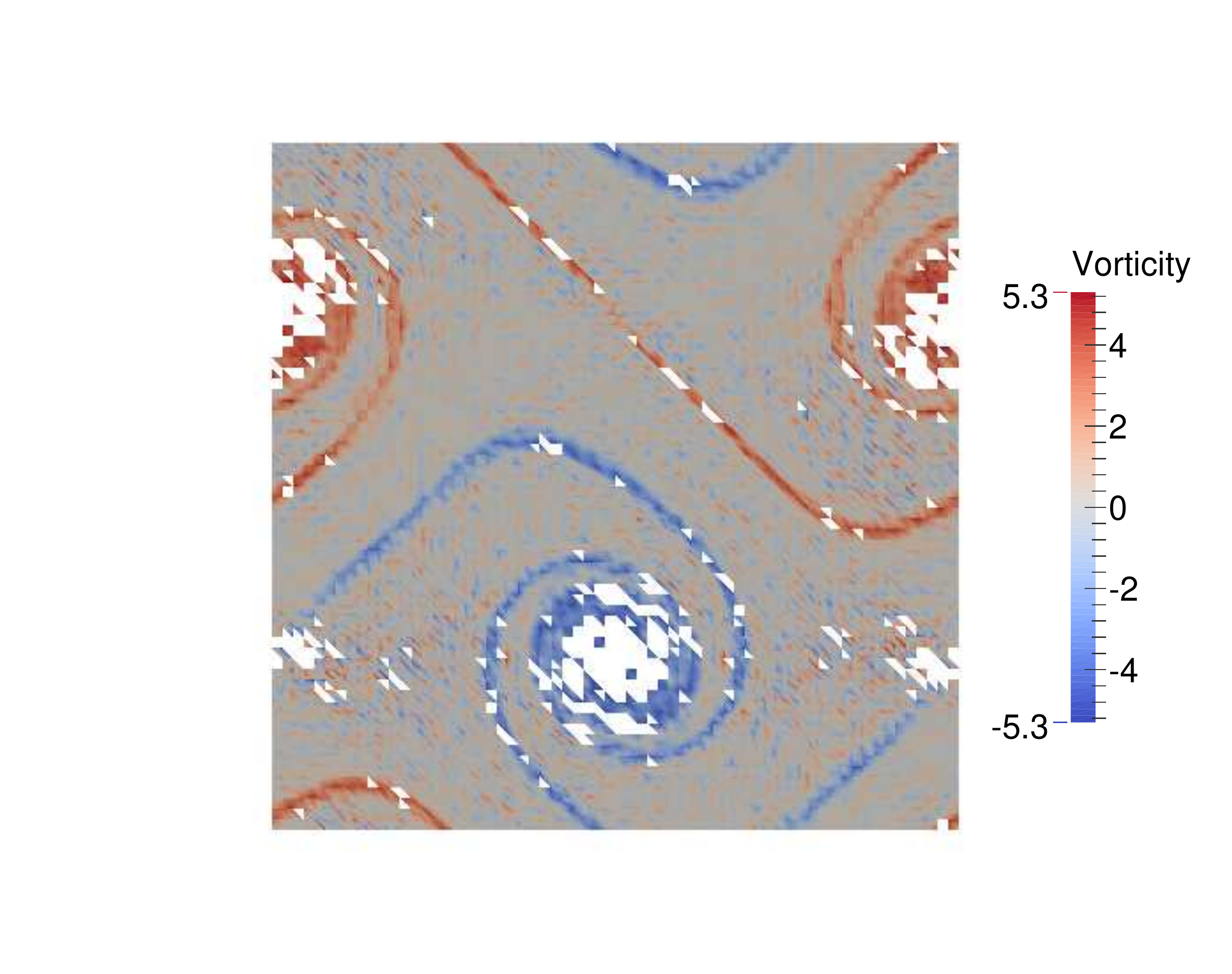}}
\subfigure[$h=0.104$, $s=1$ ($\|\mr{rot}\, {\bf u}\|_{L^\infty(\Omega)}\approx 29.93$)]{\includegraphics[width=0.5\linewidth, trim= 80 50 20 50 ,clip]{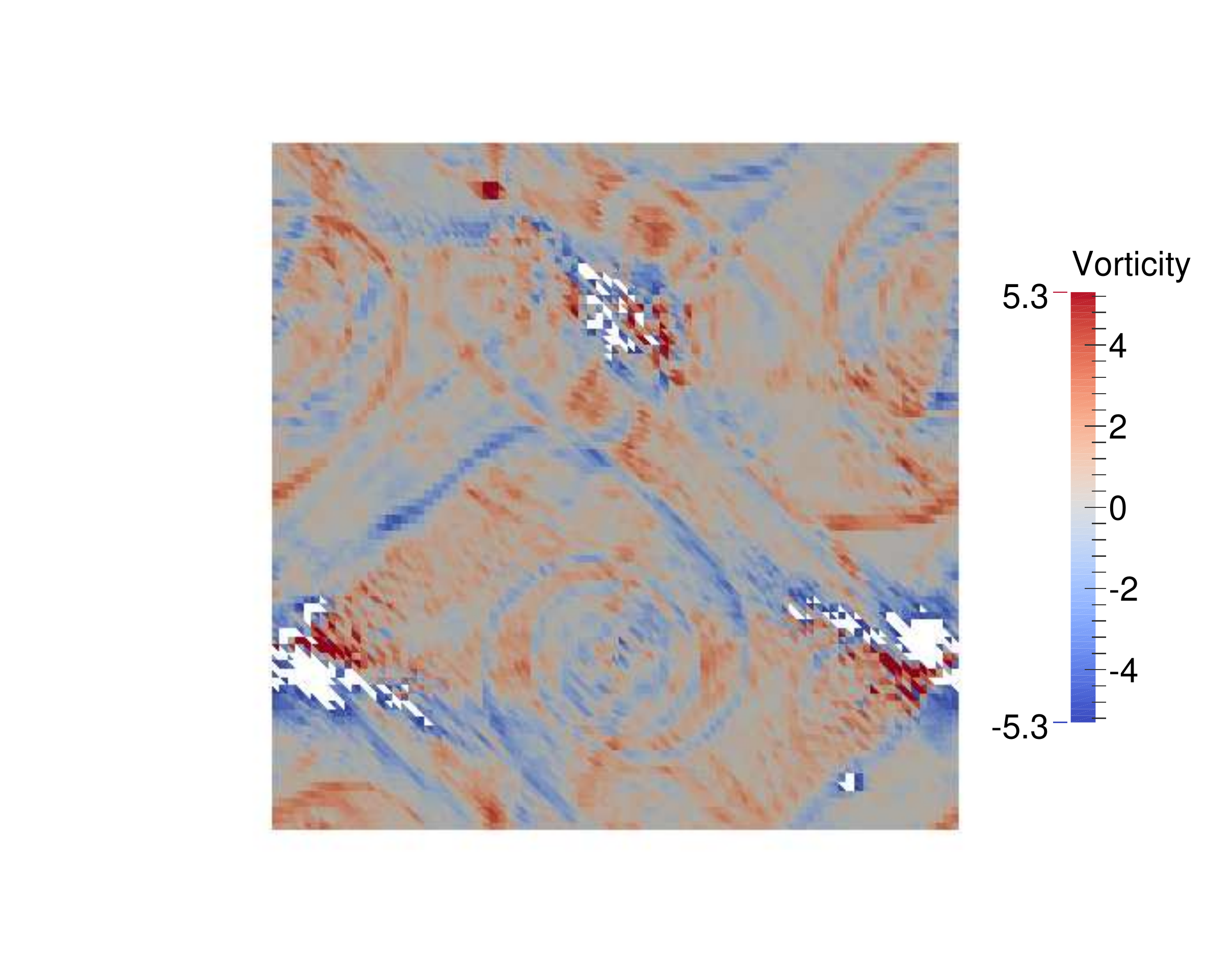}}
\subfigure[$h=0.104$, $s=2$ ($\|\mr{rot}\, {\bf u}\|_{L^\infty(\Omega)}\approx 17.74$)]{\includegraphics[width=0.5\linewidth, trim= 80 50 20 50 ,clip]{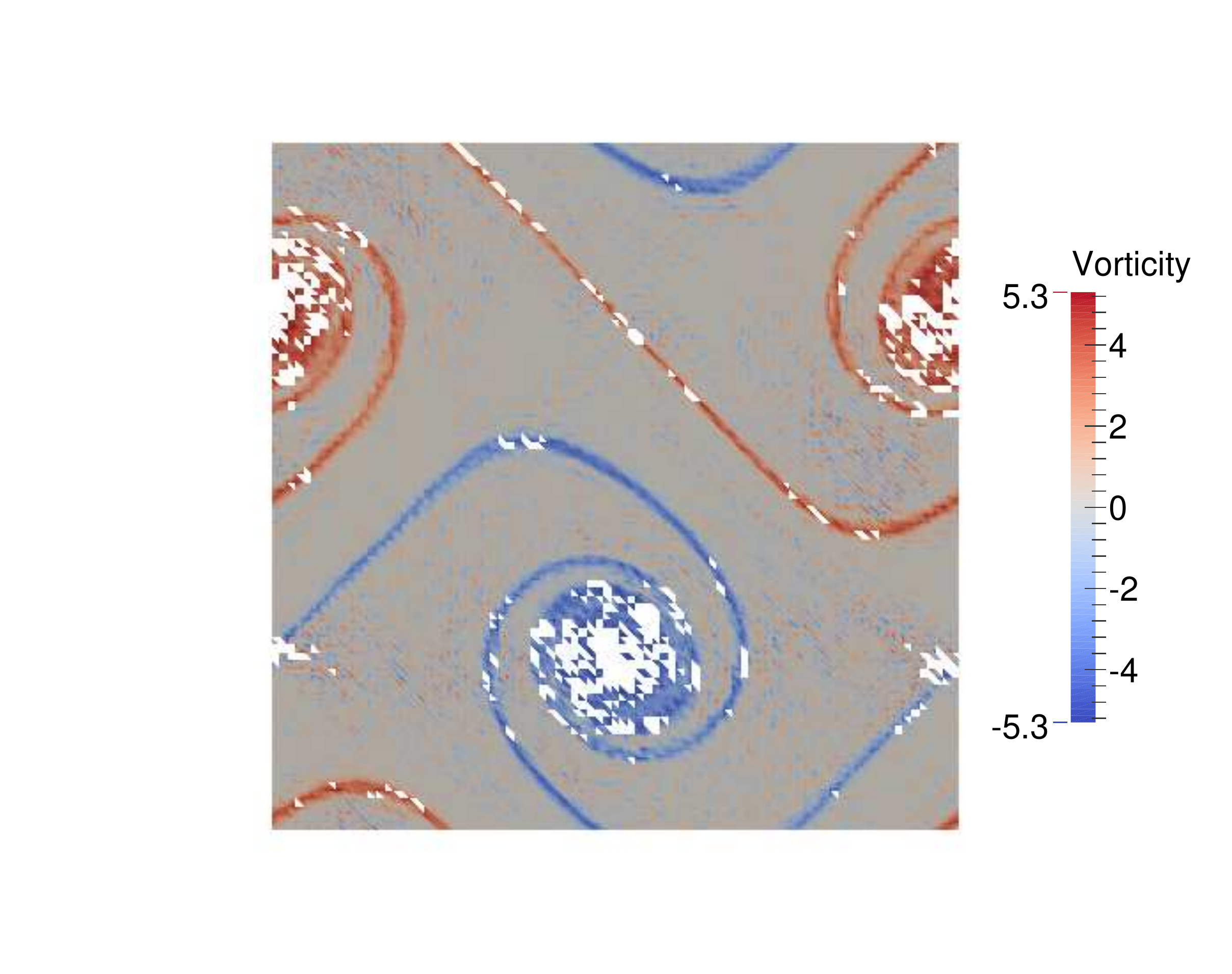}}
\caption{Vorticity function, $\mr{rot}\, {\bf u}$, at $t=8$, centred scheme, ${\bf W}_h = {\bf{BDM}}_s(\mc T_h)$. White colour is used for values outside of range.}
\label{fig:ensnap}
\end{figure}
\begin{figure}
\noindent
\subfigure[$h=0.185$, $s=1$]{\includegraphics[width=0.5\linewidth, trim= 80 50 20 50 ,clip]{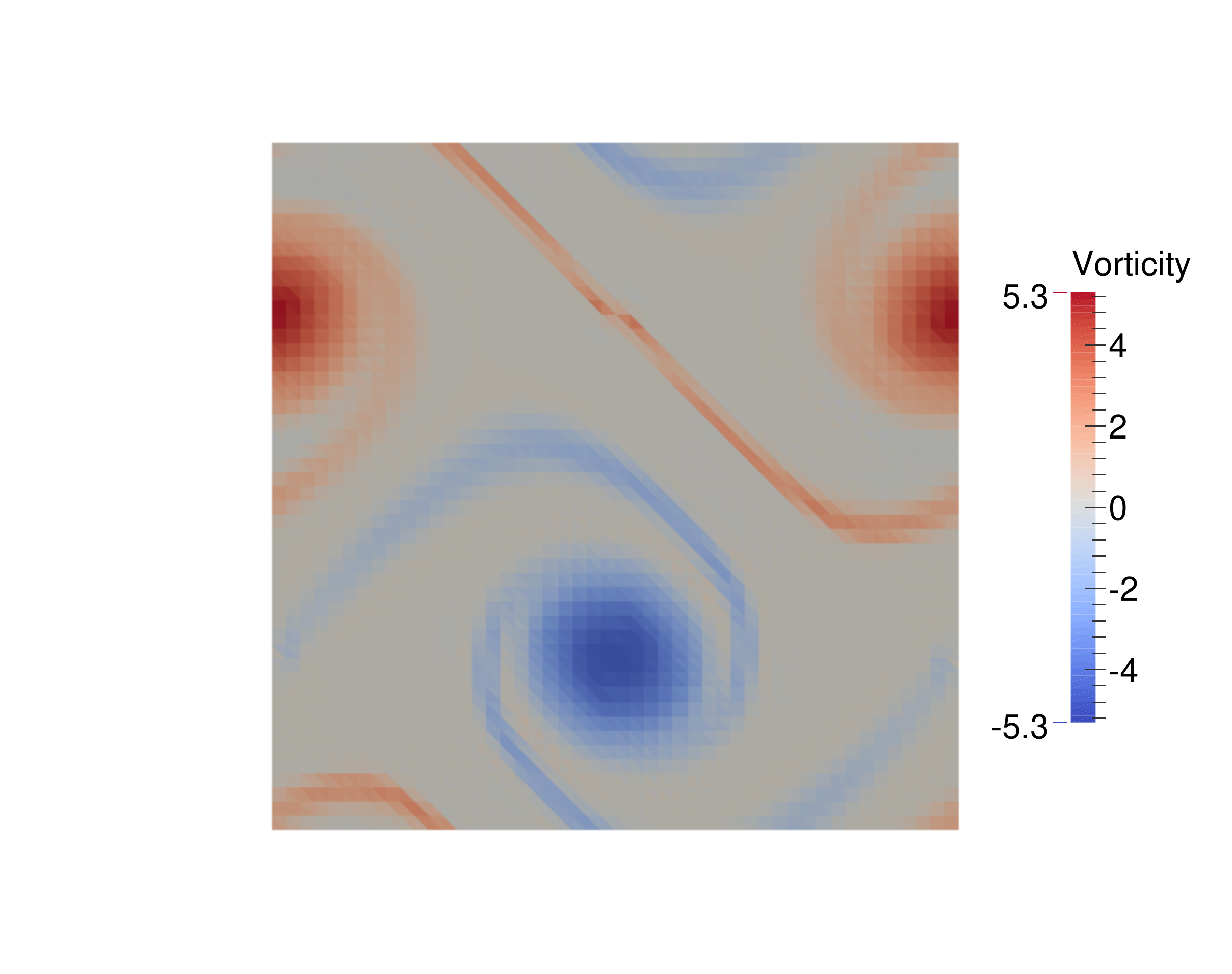}}
\subfigure[$h=0.185$, $s=2$ ]{\includegraphics[width=0.5\linewidth, trim= 80 50 20 50 ,clip]{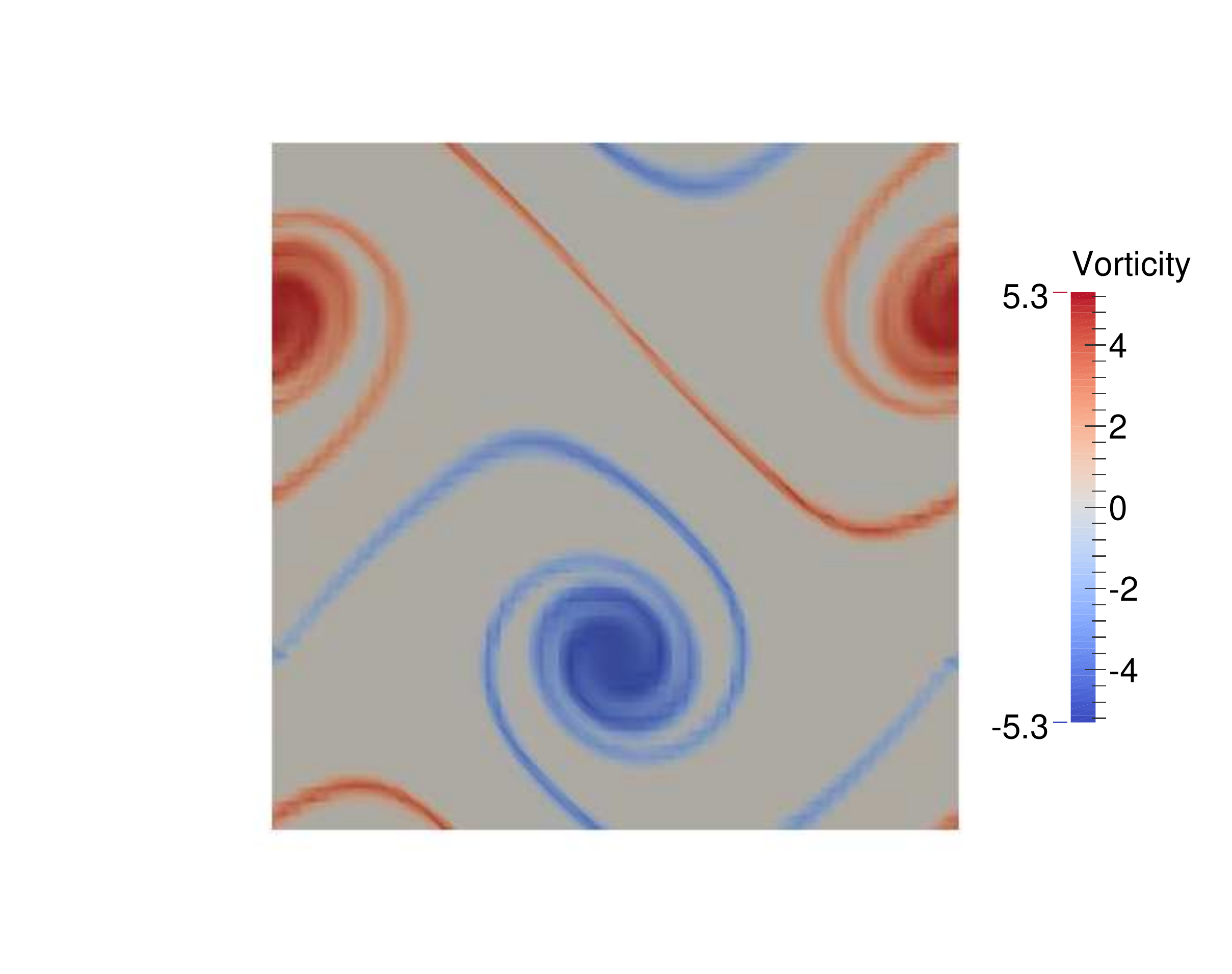}}
\subfigure[$h=0.139$, $s=1$]{\includegraphics[width=0.5\linewidth, trim= 80 50 20 50 ,clip]{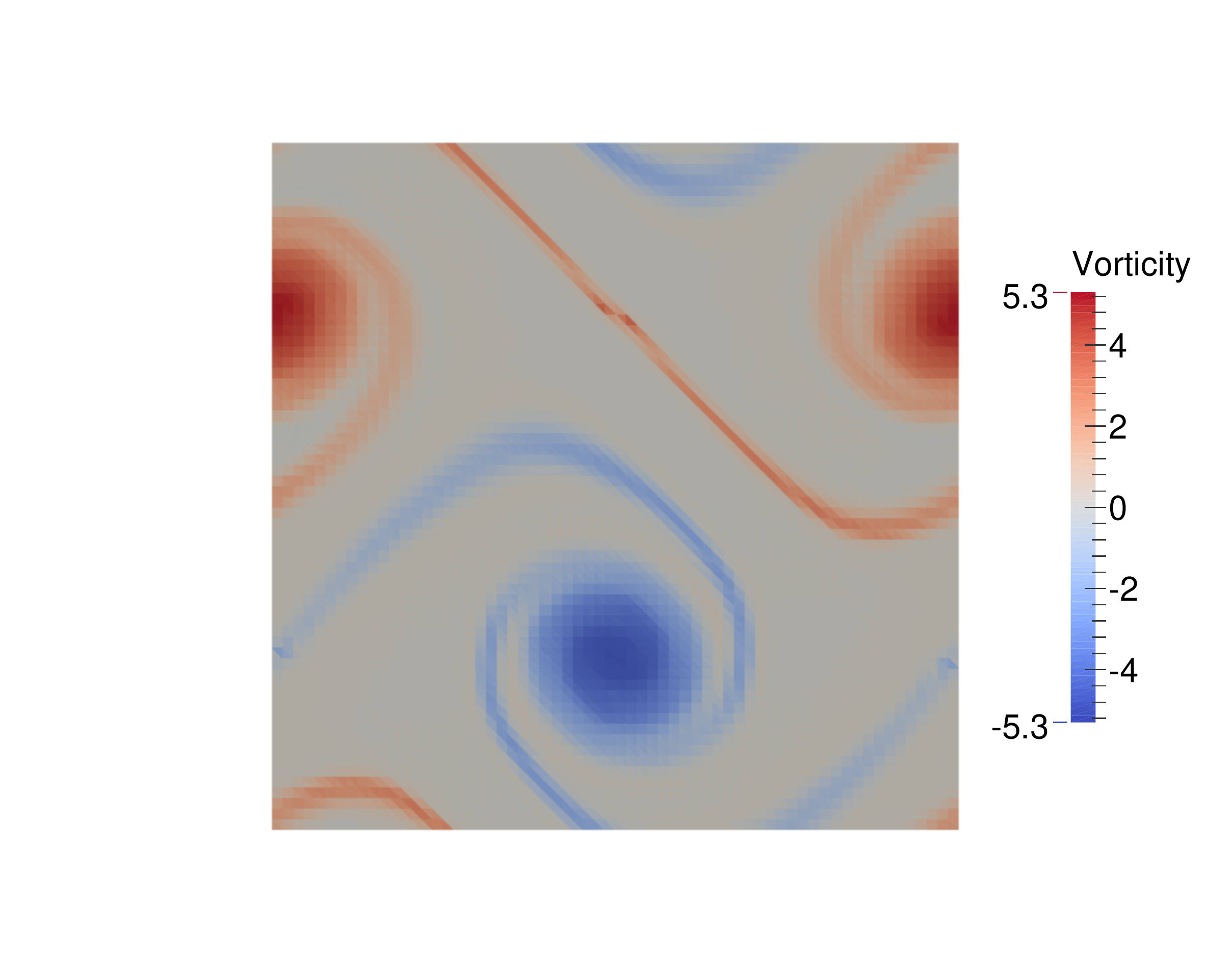}}
\subfigure[$h=0.139$, $s=2$ ]{\includegraphics[width=0.5\linewidth, trim= 80 50 20 50 ,clip]{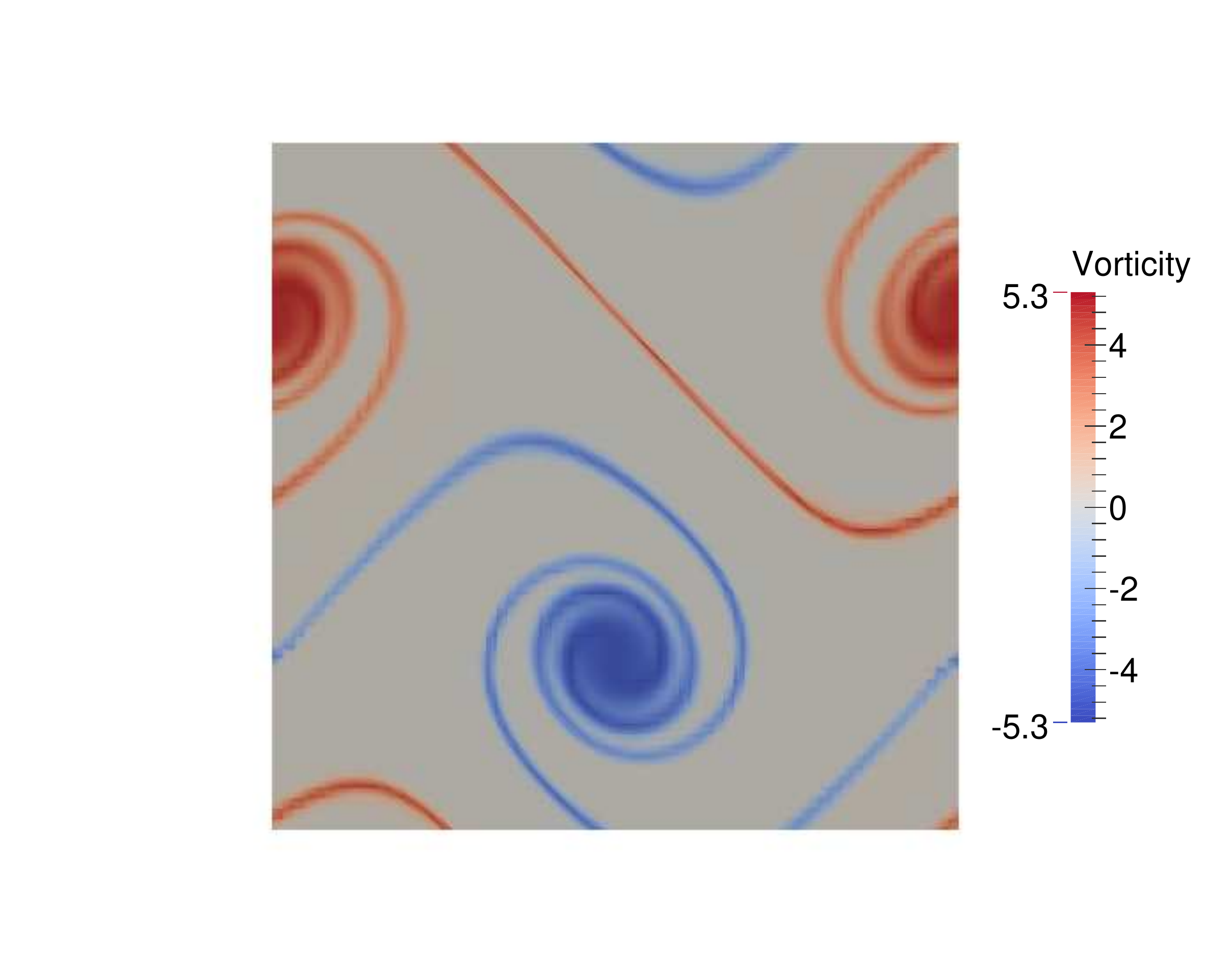}}
\subfigure[$h=0.104$, $s=1$]{\includegraphics[width=0.5\linewidth, trim= 80 50 20 50 ,clip]{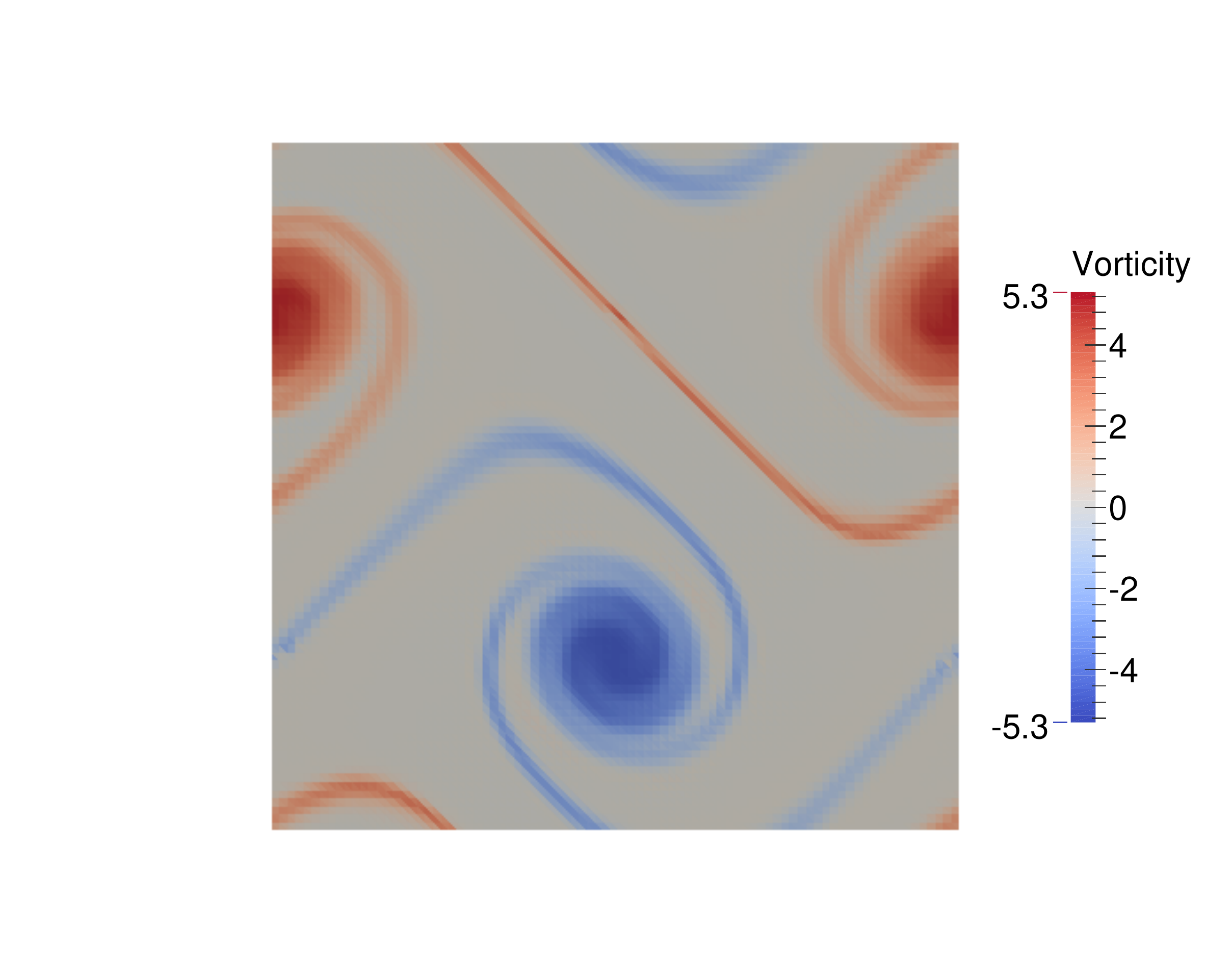}}
\subfigure[$h=0.104$, $s=2$ ]{\includegraphics[width=0.5\linewidth, trim= 80 50 20 50 ,clip]{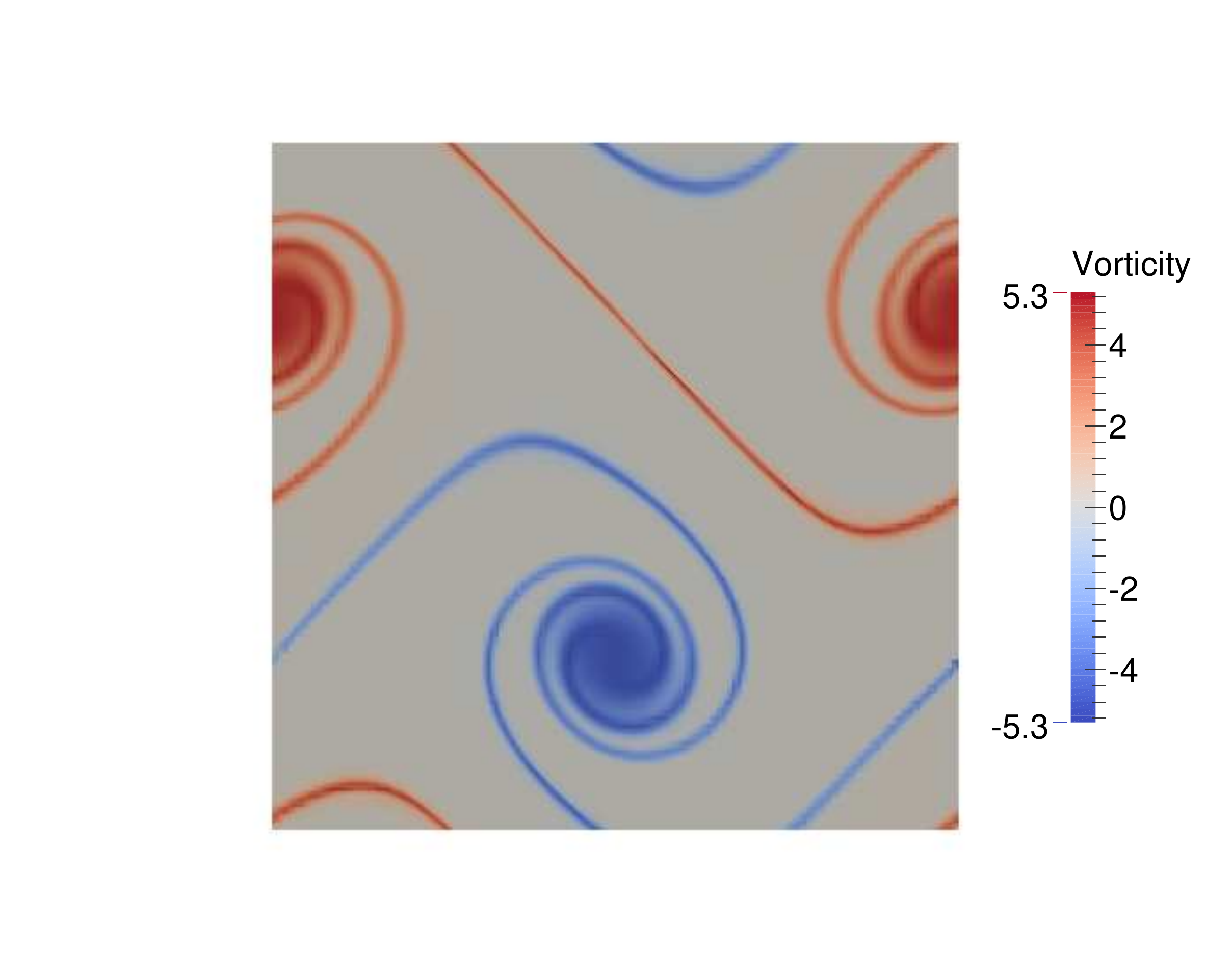}}
\caption{Vorticity function, $\mr{rot}\, {\bf u}$, at $t=8$, upwind Lie derivative scheme, ${\bf W}_h = {\bf{BDM}}_s(\mc T_h)$.}
\label{fig:ensnap1}
\end{figure}
\begin{figure}
\noindent
\subfigure[$h=0.185$, $s=1$]{\includegraphics[width=0.5\linewidth, trim= 80 50 20 50 ,clip]{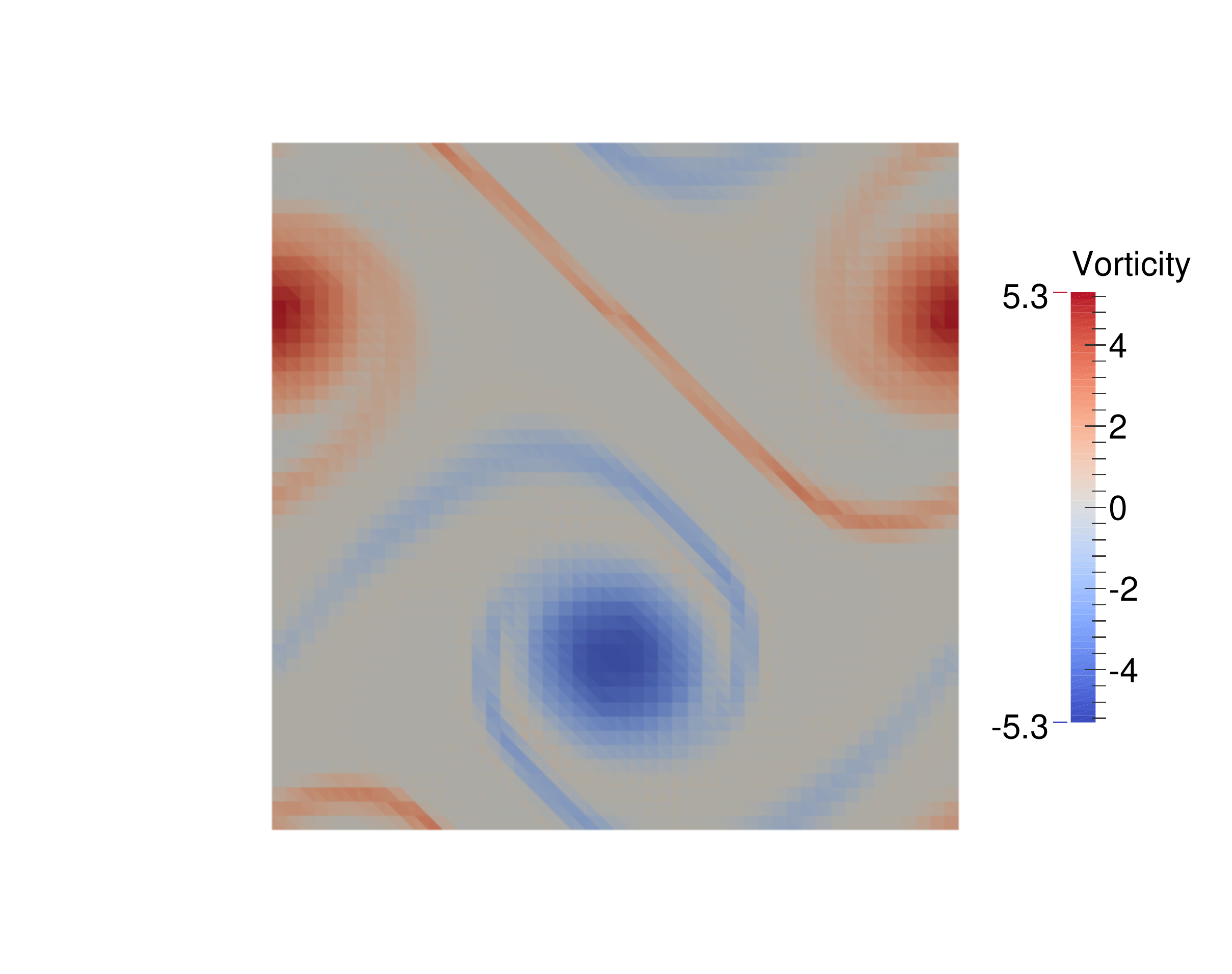}}
\subfigure[$h=0.185$, $s=2$ ]{\includegraphics[width=0.5\linewidth, trim= 80 50 20 50 ,clip]{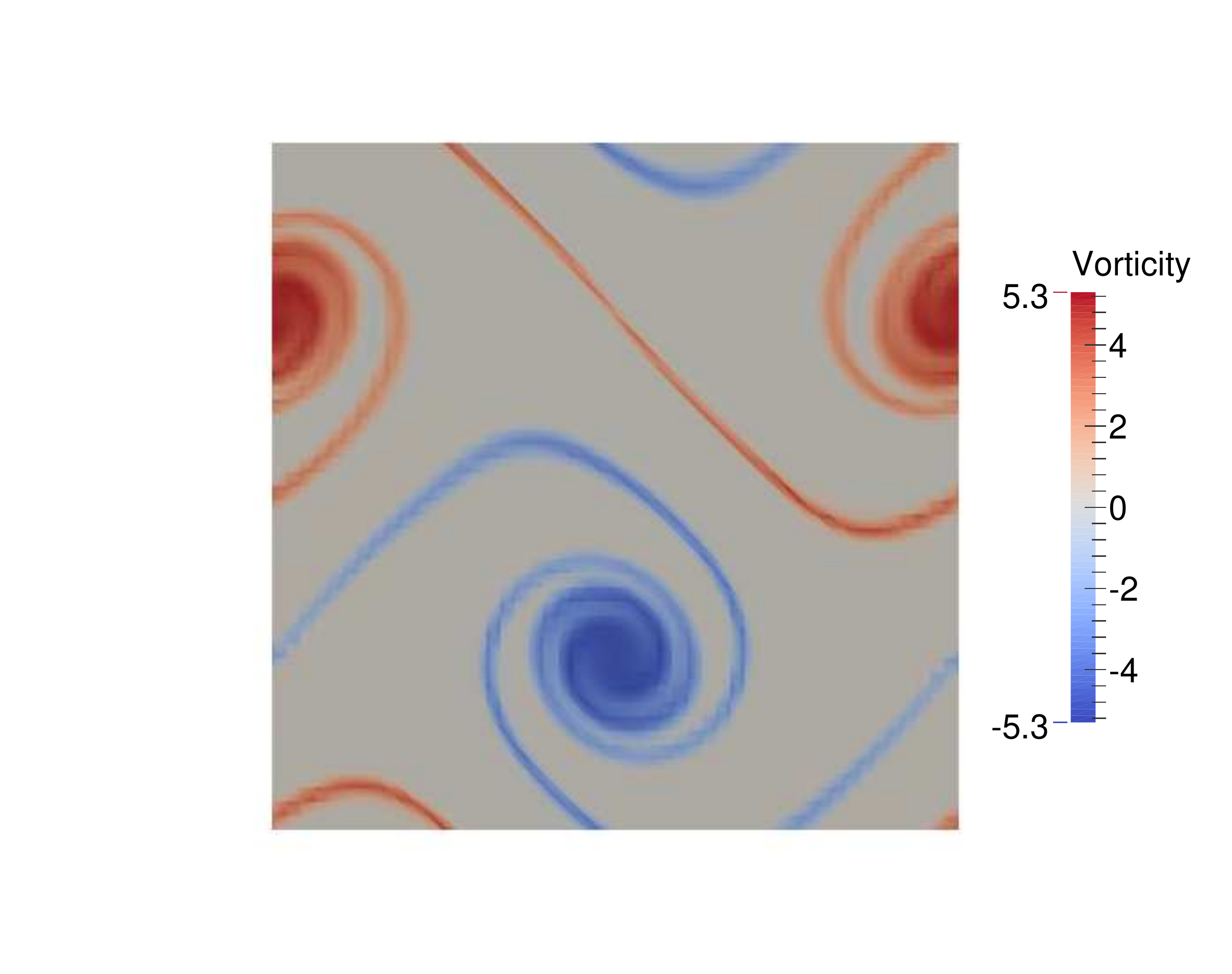}}
\subfigure[$h=0.139$, $s=1$]{\includegraphics[width=0.5\linewidth, trim= 80 50 20 50 ,clip]{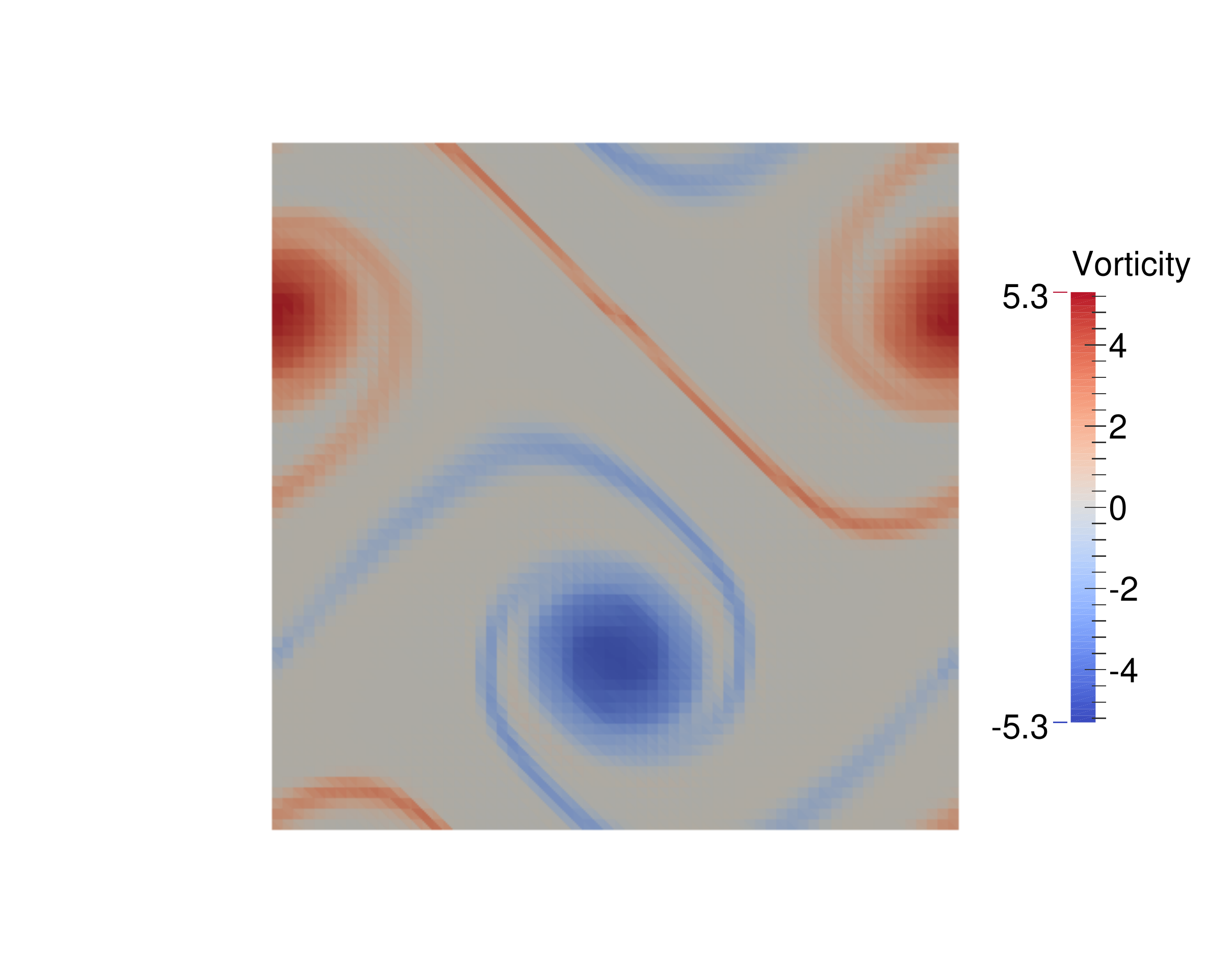}}
\subfigure[$h=0.139$, $s=2$ ]{\includegraphics[width=0.5\linewidth, trim= 80 50 20 50 ,clip]{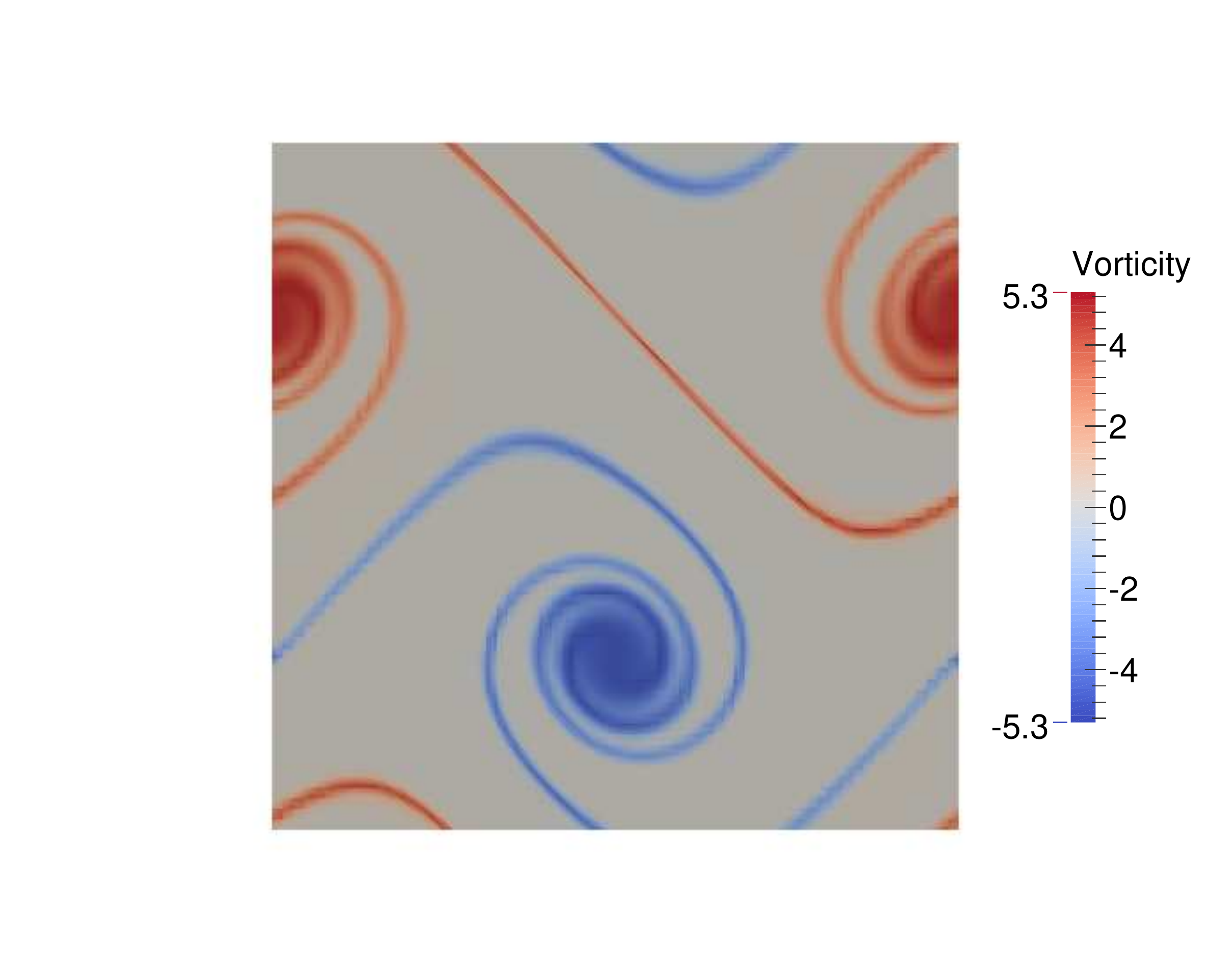}}
\subfigure[$h=0.104$, $s=1$]{\includegraphics[width=0.5\linewidth, trim= 80 50 20 50 ,clip]{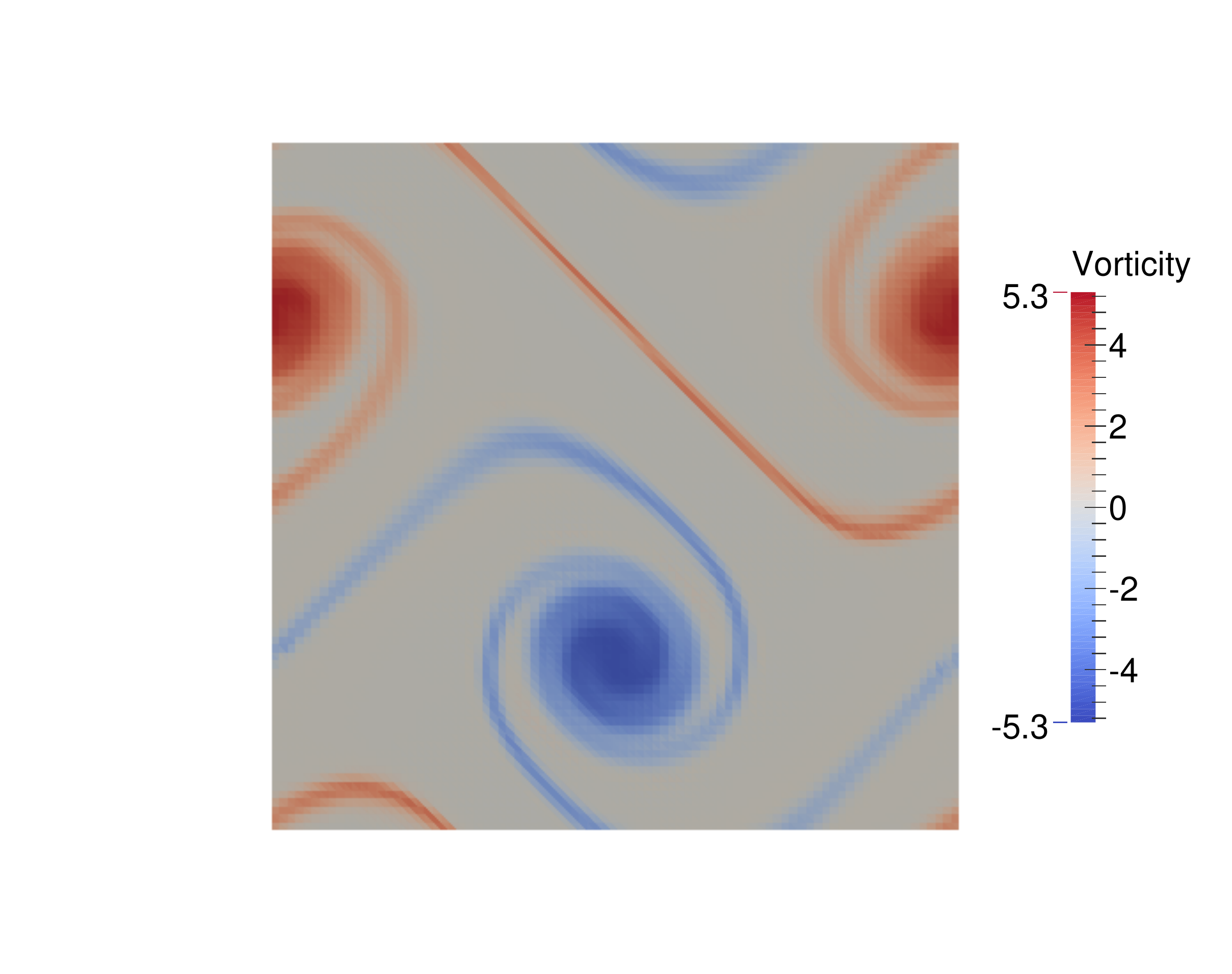}}
\subfigure[$h=0.104$, $s=2$ ]{\includegraphics[width=0.5\linewidth, trim= 80 50 20 50 ,clip]{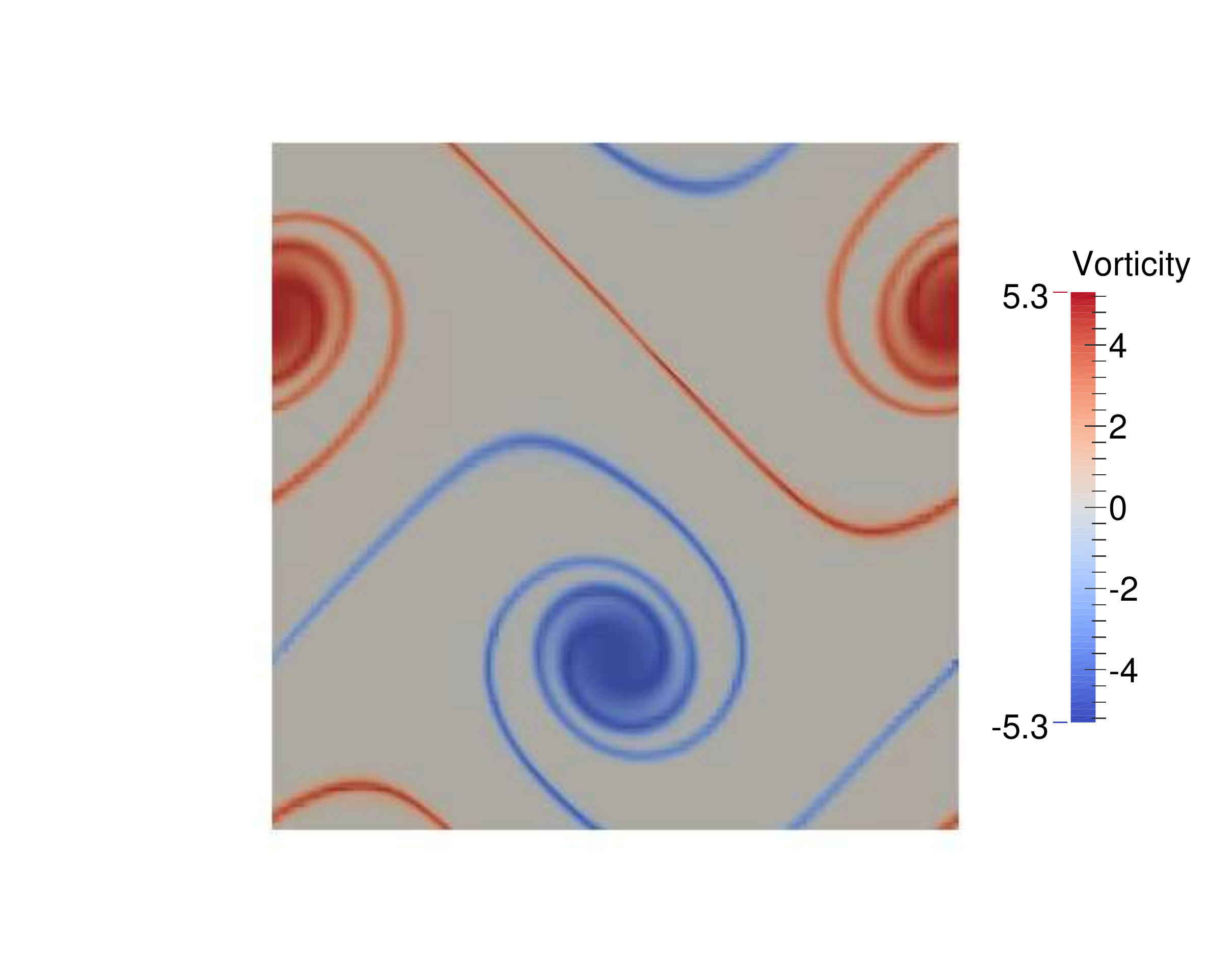}}
\caption{Vorticity function, $\mr{rot}\, {\bf u}$, at $t=8$, upwind GSS scheme, ${\bf W}_h = {\bf{BDM}}_s(\mc T_h)$.}
\label{fig:ensnap2}
\end{figure}
\begin{figure}
\noindent
\subfigure{\includegraphics[width=0.5\linewidth, trim= 0 5 53 30 ,clip]{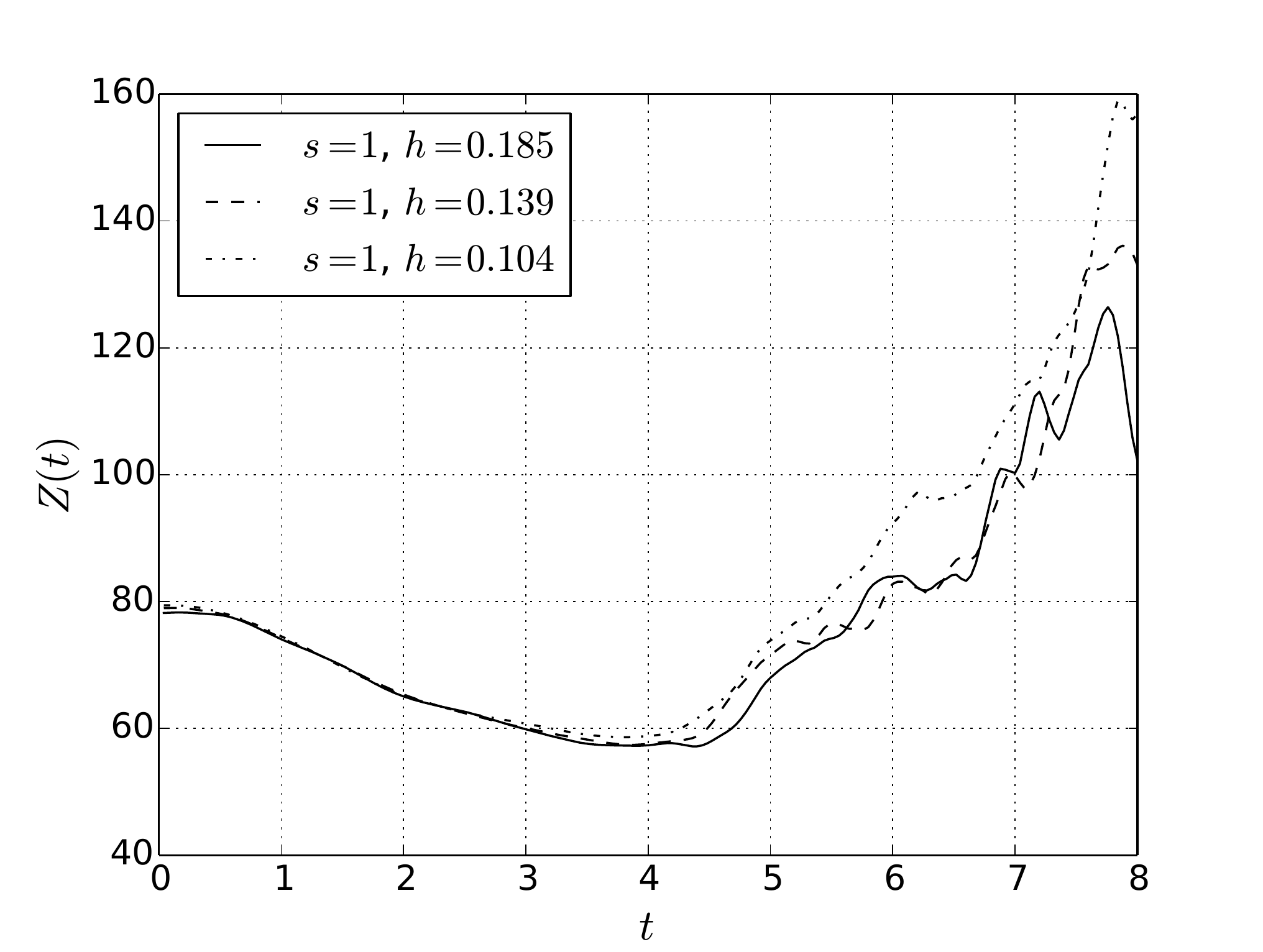}}
\subfigure{\includegraphics[width=0.5\linewidth, trim=  0 5 53 30,clip]{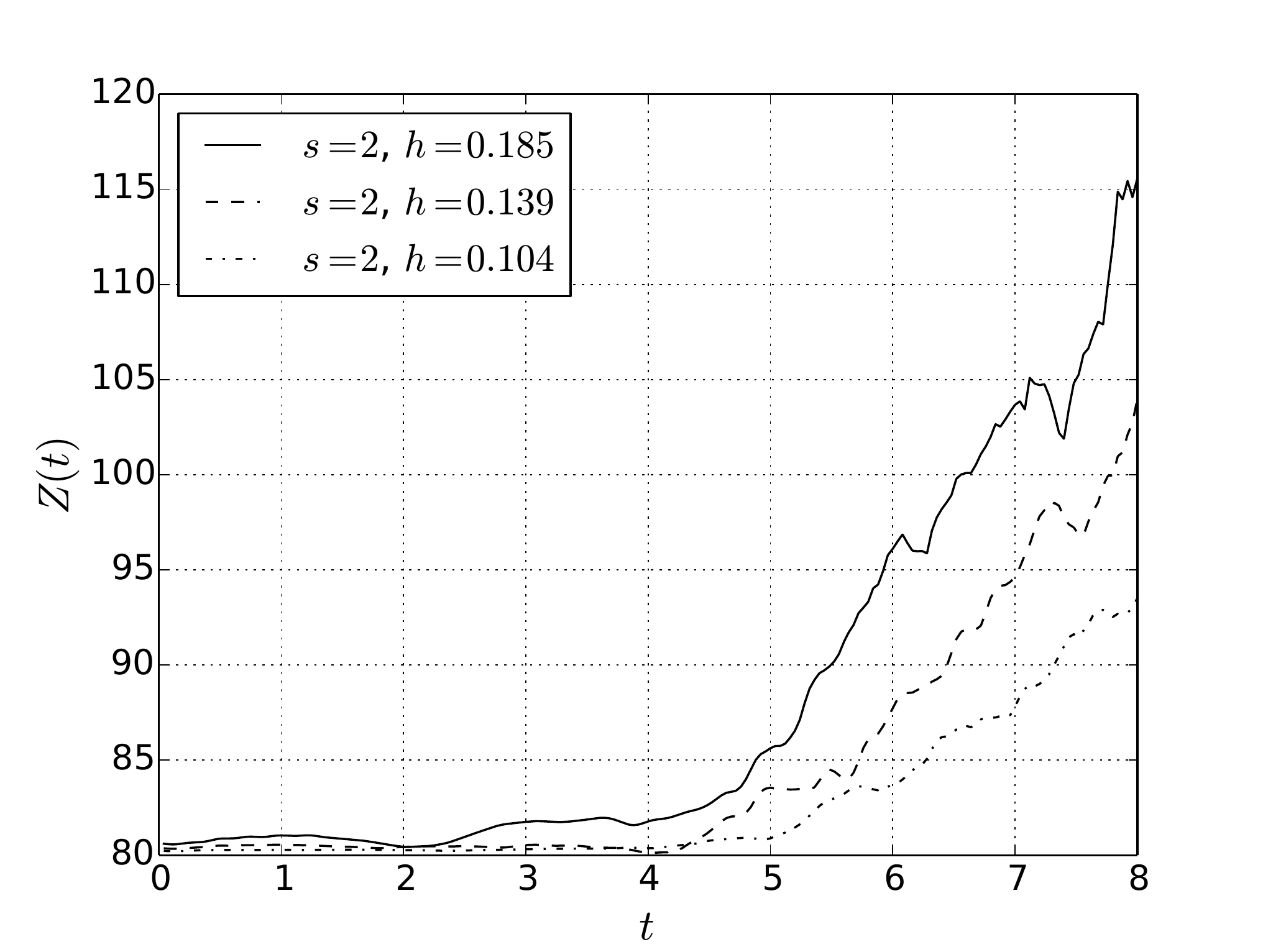}}
\caption{Enstrophy history $Z(t)$ for the centred scheme with ${\bf W}_h = {\bf{BDM}}_s(\mc T_h)$.}
\label{fig:enshist1}
\end{figure}

\begin{figure}
\noindent
\subfigure{\includegraphics[width=0.5\linewidth, trim= 0 5 53 30 ,clip]{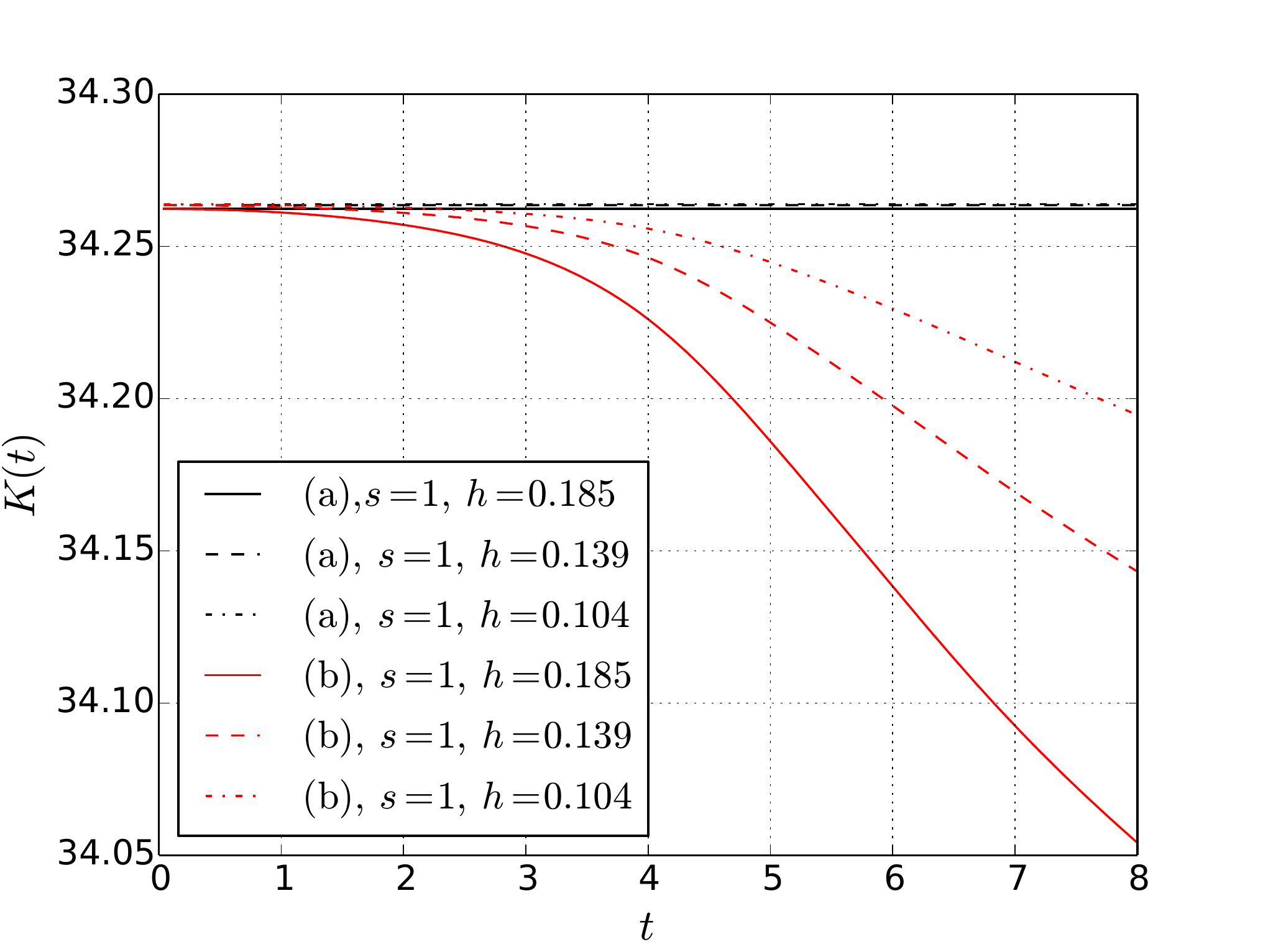}}
\subfigure{\includegraphics[width=0.5\linewidth, trim=  0 5 53 30,clip]{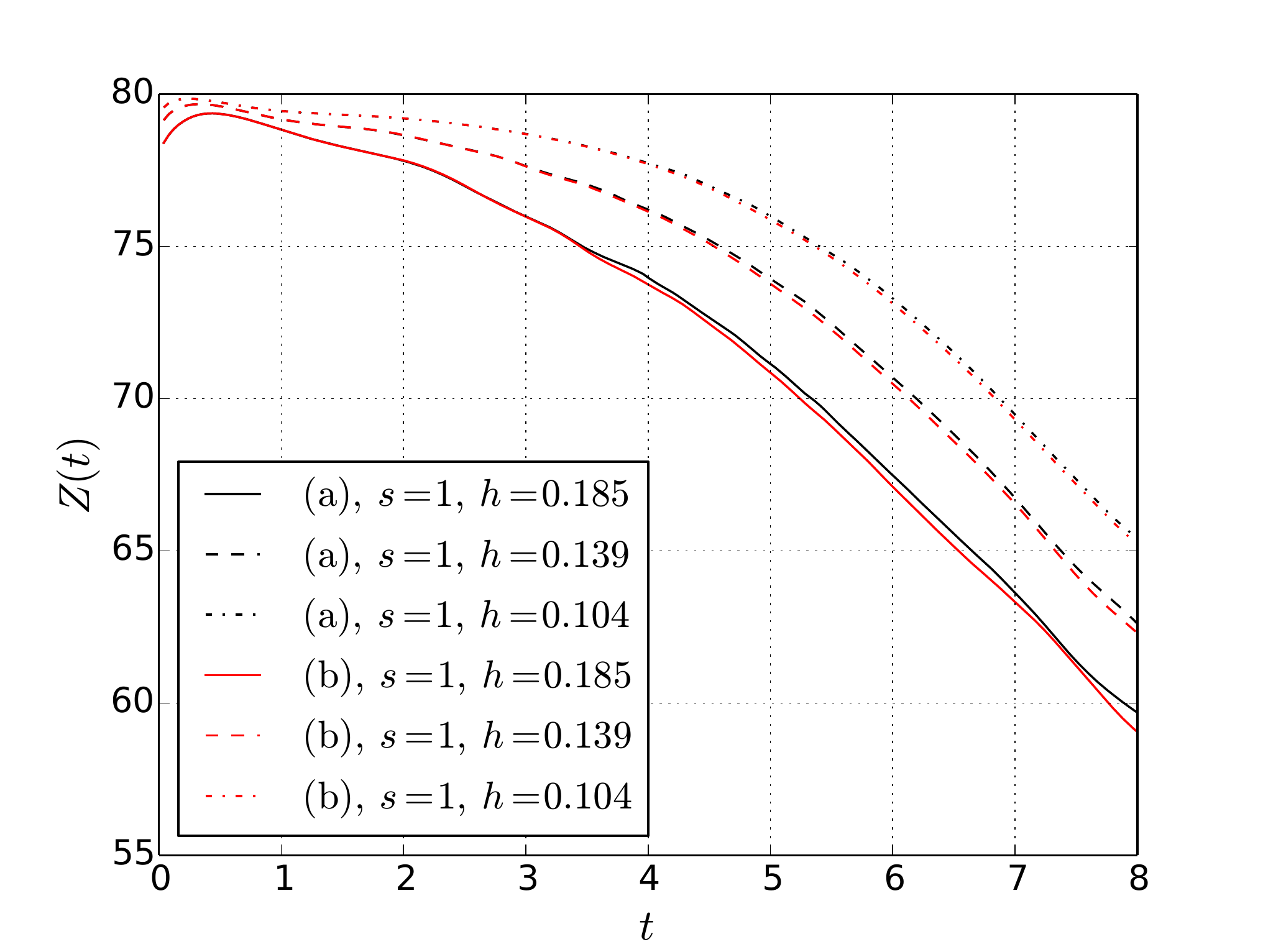}}
\subfigure{\includegraphics[width=0.5\linewidth, trim= 0 5 53 25 ,clip]{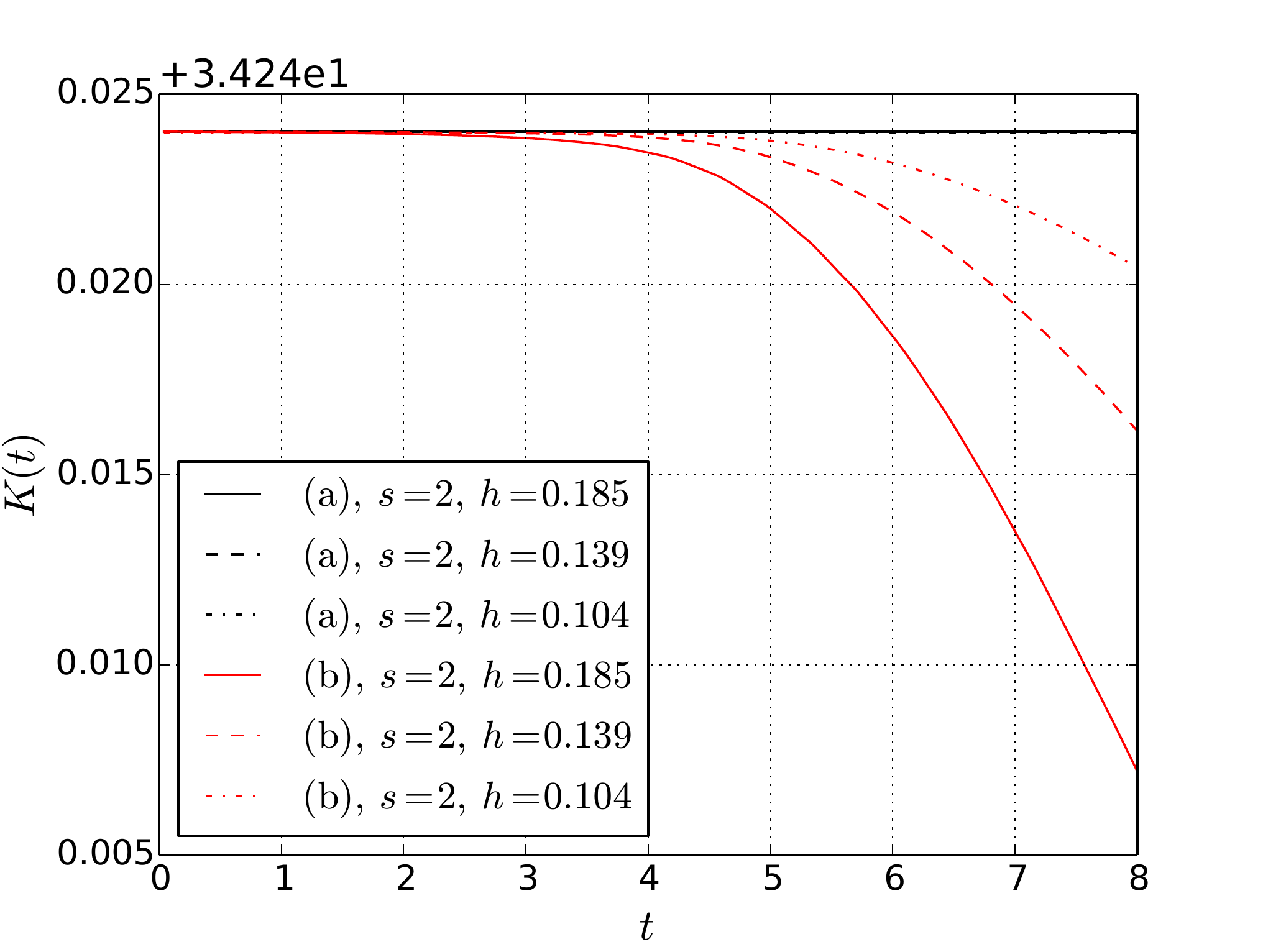}}
\subfigure{\includegraphics[width=0.5\linewidth, trim=  0 5 53 25,clip]{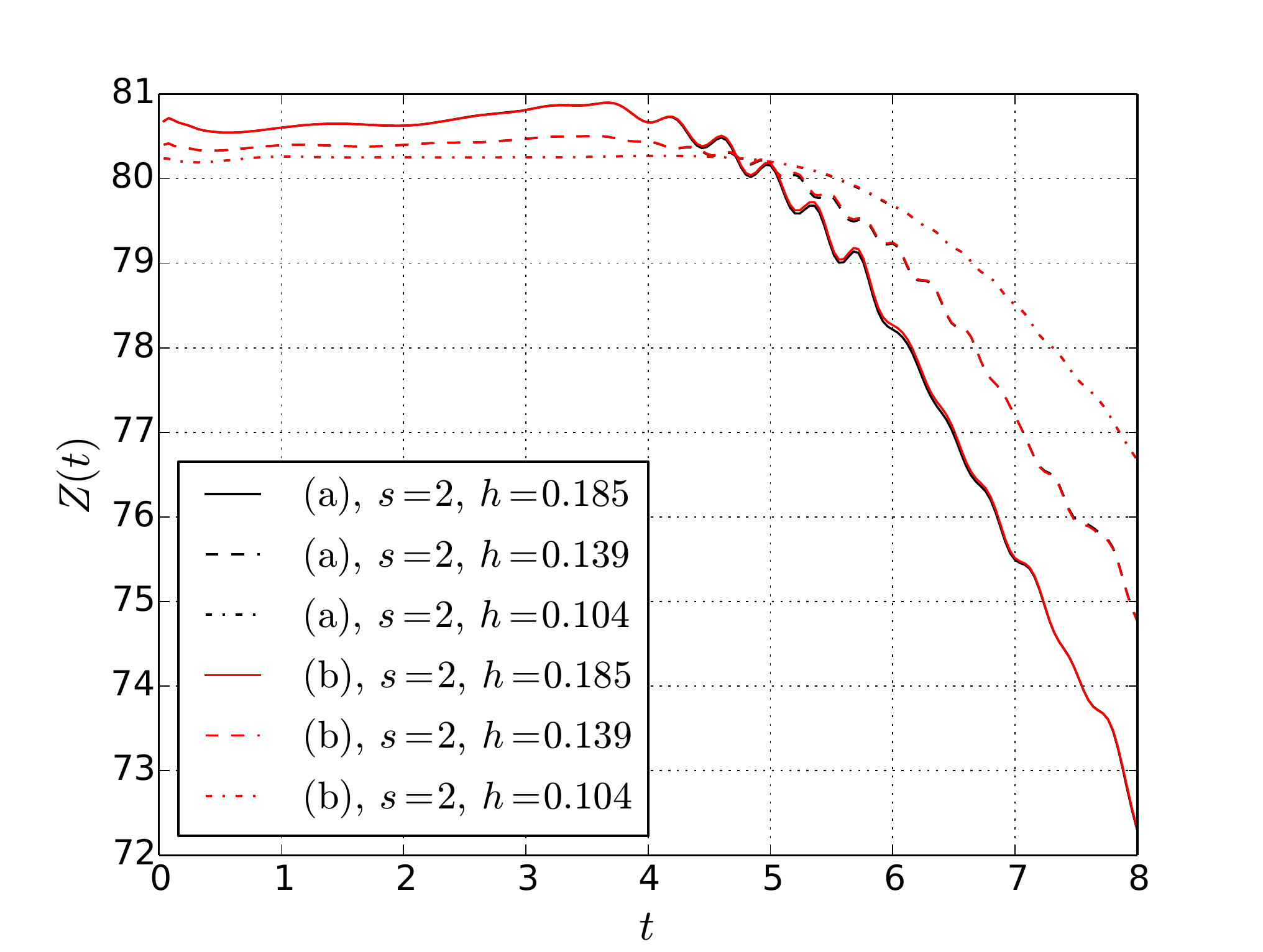}}
\caption{Kinetic energy, $K(t)$, and enstrophy history, $Z(t)$, for the upwind Lie derivative scheme (a), and the upwind GSS scheme (b), with ${\bf W}_h = {\bf{BDM}}_s(\mc T_h)$.}
\label{fig:enshist2}
\end{figure}

\section{Summary and outlook}\label{sec:conc}
In this paper we extended the variational discretisation proposed in \cite{Pavlov1} for the incompressible Euler equations to the finite element setting.  We largely based our efforts on the work of Heumann et al.\ \cite{Heumann11, Heumann13, Heumann16}, where the authors introduced Galerkin discretisations for the Lie derivative operator. Specifically, we used such operators as discrete Lie algebra variables, so that the discrete equations of motion could be derived directly from an appropriately defined Lagrangian and the Hamilton-d'Alembert's principle. We found that the discretisation obtained using this strategy coincides with the centred flux scheme proposed in \cite{Guzman16}. Moreover, it has a built-in energy conservation property and satisfies a discrete Kelvin's circulation theorem. We also introduced an upwind-stabilised version of the algorithm, which preserves both properties.
Finally, we provided a convergence analysis proving (sub-optimal) convergence rates for the upwind scheme for sufficiently high order finite elements. Numerical tests suggest that this result might not be sharp, as we obtain optimal convergence rates even using low order spaces.

The methodology that we used in this paper to discretise the incompressible Euler equations is general and can be  applied to different fluid models which share a similar variational structure. The simplest example of such possible extensions is given by the Euler-alpha model \cite{Holm98}. In this case, we keep the setting of the Euler equations described in Section~\ref{sec:models}, whereas the Lagrangian is given by
\[
l({\bf u}) = \frac{1}{2} \int_{\Omega } (\|{\bf u} \|^2 + \alpha^2 \|\bf{grad}\,{\bf u}\|^2\,)\, \mr{vol} \, ,
\]
and $\delta{l}/\delta{\bf u}$ can be identified with an opportunely defined momentum $ m = A {\bf u} \cdot \ed{\bf x}$, with $A = (I-\alpha^2\Delta)$ and $I$ being the identity map. Suppose that we have a discretisation for $A$ given by $A_h$. Then, in the notation of Section~\ref{sec:fevari}, we can take $A_h:{\bf W}_h \rightarrow {\bf W}_h$ to be a linear invertible operator acting on the velocity finite element space ${\bf W}_h$. If we assume $A_h$ to be symmetric, the variational derivation we discussed in Section~\ref{sec:fevari} can be applied without changes so that eventually we get an advection equation for the momentum $m$ expressed in terms of our discrete Lie derivative operator.

Many MHD and GFD models can also be derived from a variational principle similar to the one that applies for perfect incompressible fluids \cite{Holm98}. In these cases one needs to add advected quantities to the system, which again require an appropriate definition for a discrete Lie derivative. Moreover, the variational derivation of Section~\ref{sec:fevari}, needs to be appropriately modified, although we expect to maintain its general features. This provides a direction for our future research.

\bibliography{thesis1}

\begin{thebibliography}{10}

\bibitem{Abraham88}
Ralph Abraham, Jerrold~E Marsden, and Tudor Ratiu.
\newblock {\em {Manifolds, Tensor Analysis, and Applications}}, volume~75.
\newblock Springer-Verlag New York, 2 edition, 1988.

\bibitem{Arnold06}
Douglas~N Arnold, Richard~S Falk, and Ragnar Winther.
\newblock Finite element exterior calculus, homological techniques, and
  applications.
\newblock {\em Acta Numerica}, 15:1--155, 2006.

\bibitem{Arnold10}
Douglas~N Arnold, Richard~S Falk, and Ragnar Winther.
\newblock Finite element exterior calculus: from {H}odge theory to numerical
  stability.
\newblock {\em Bulletin of the American Mathematical Society (N.S.)},
  47:281--354, 2010.

\bibitem{Arnold66}
Vladimir Arnol'd.
\newblock {Sur la g\'eom\'etrie diff\'erentielle des groupes de Lie de
  dimension infinie et ses applications \`a l'hydrodynamique des fluides
  parfaits}.
\newblock {\em Annales de l'institut Fourier}, 16(1):319--361, 1966.

\bibitem{petsc-user-ref}
Satish Balay, Shrirang Abhyankar, Mark~F Adams, Jed Brown, Peter Brune, Kris
  Buschelman, Lisandro Dalcin, Victor Eijkhout, William~D Gropp, Dinesh
  Kaushik, Matthew~G Knepley, Lois~Curfman McInnes, Karl Rupp, Barry~F Smith,
  Stefano Zampini, and Hong Zhang.
\newblock {PETS}c users manual.
\newblock Technical Report ANL-95/11 - Revision 3.6, Argonne National
  Laboratory, 2015.

\bibitem{petsc-efficient}
Satish Balay, William~D Gropp, Lois~Curfman McInnes, and Barry~F Smith.
\newblock Efficient management of parallelism in object oriented numerical
  software libraries.
\newblock In E.~Arge, A.~M. Bruaset, and H.~P. Langtangen, editors, {\em Modern
  Software Tools in Scientific Computing}, pages 163--202. Birkh{\"{a}}user
  Press, 1997.

\bibitem{Boffi13}
Daniele Boffi, Michel Fortin, and Franco Brezzi.
\newblock {\em Mixed finite element methods and applications}.
\newblock Springer series in computational mathematics. Springer, Berlin,
  Heidelberg, 2013.

\bibitem{brenier89}
Yann Brenier.
\newblock The least action principle and the related concept of generalized
  flows for incompressible perfect fluids.
\newblock {\em Journal of the American Mathematical Society}, 2(2):225--255,
  1989.

\bibitem{brenier00}
Yann Brenier.
\newblock {Derivation of the Euler Equations from a Caricature of Coulomb
  Interaction}.
\newblock {\em Communications in Mathematical Physics}, 212(1):93--104, 2000.

\bibitem{Brezzi04}
Franco Brezzi, L~Donatella Marini, and Endre S{\"{u}}li.
\newblock {Discontinuous Galerkin methods for first-order hyperbolic problems}.
\newblock {\em {Mathematical Models and Methods in Applied Sciences}},
  14:1893--1903, 2004.

\bibitem{Butler71}
Geoffrey Butler and Thomas Rogers.
\newblock {A generalization of a lemma of Bihari and applications to pointwise
  estimates for integral equations}.
\newblock {\em Journal of Mathematical Analysis and Applications},
  33(1):77--81, 1971.

\bibitem{Cockburn05}
Bernardo Cockburn, Guido Kanschat, and Dominik Sch{\"o}tzau.
\newblock {A locally conservative LDG method for the incompressible
  Navier-Stokes equations}.
\newblock {\em Mathematics of Computation}, 74(251):1067--1095, 2005.

\bibitem{Dalcin2011}
Lisandro~D Dalcin, Rodrigo~R Paz, Pablo~A Kler, and Alejandro Cosimo.
\newblock Parallel distributed computing using {P}ython.
\newblock {\em Advances in Water Resources}, 34(9):1124--1139, 2011.
\newblock New Computational Methods and Software Tools.

\bibitem{Desbrun14}
Mathieu Desbrun, Evan~S. Gawlik, Fran{\c{c}}ois Gay-Balmaz, and Vladimir
  Zeitlin.
\newblock Variational discretization for rotating stratified fluids.
\newblock {\em Discrete and Continuous Dynamical Systems}, 34(2):477--509,
  2014.

\bibitem{Desbrun03}
Mathieu Desbrun, Anil~N Hirani, Melvin Leok, and Jerrold~E Marsden.
\newblock Discrete exterior calculus.
\newblock 2003.
\newblock arXiv:math/0508341.

\bibitem{Ebin70}
David~G Ebin and Jerrold~E Marsden.
\newblock Groups of diffeomorphisms and the motion of an incompressible fluid.
\newblock {\em Annals of Mathematics}, 92(1):102--163, 1970.

\bibitem{gallouet16}
Thomas Gallou{\"e}t and Quentin M{\'e}rigot.
\newblock {A Lagrangian scheme for the incompressible Euler equation using
  optimal transport}.
\newblock 2016.
\newblock arXiv:1605.00568.

\bibitem{Guzman16}
Johnny Guzm{\'a}n, Chi-Wang Shu, and Fil{\'a}nder~A Sequeira.
\newblock {H (div) conforming and DG methods for incompressible Euler's
  equations}.
\newblock {\em IMA Journal of Numerical Analysis}, 2016.
\newblock Accepted for publication, https://doi.org/10.1093/imanum/drw054.

\bibitem{Chaco95}
Bruce Hendrickson and Robert Leland.
\newblock A multilevel algorithm for partitioning graphs.
\newblock In {\em Supercomputing '95: Proceedings of the 1995 ACM/IEEE
  Conference on Supercomputing (CDROM)}, page~28, New York, 1995. ACM Press.

\bibitem{Heumann}
Holger Heumann.
\newblock {\em Eulerian and semi-Lagrangian methods for advection-diffusion of
  differential forms}.
\newblock PhD thesis, ETH Z{\"{u}}rich, 2011.

\bibitem{Heumann11}
Holger Heumann and Ralf Hiptmair.
\newblock {Eulerian and semi-Lagrangian methods for advection-diffusion for
  differential forms}.
\newblock {\em Discrete and Continuous Dynamical Systems}, 29(4):1471--1495,
  2011.

\bibitem{Heumann13}
Holger Heumann and Ralf Hiptmair.
\newblock Stabilized {G}alerkin methods for magnetic advection.
\newblock {\em ESAIM Mathematical Modelling and Numerical Analysis},
  47(6):1713--1732, 2013.

\bibitem{Heumann16}
Holger Heumann, Ralf Hiptmair, and Cecilia Pagliantini.
\newblock Stabilized {G}alerkin for transient advection of differential forms.
\newblock {\em Discrete and Continuous Dynamical Systems. Series S},
  9(1):185--214, 2016.

\bibitem{Holm98}
Darryl~D Holm, Jerrold~E Marsden, and Tudor~S Ratiu.
\newblock {The Euler-Poincar\'{e} Equations and Semidirect Products with
  Applications to Continuum Theories}.
\newblock {\em Advances in Mathematics}, 137(1):1--81, 1998.

\bibitem{liu00}
Jian-Guo Liu and Chi-Wang Shu.
\newblock A high-order discontinuous {G}alerkin method for 2d incompressible
  flows.
\newblock {\em Journal of Computational Physics}, 160(2):577--596, 2000.

\bibitem{Marsden10}
Jerrold~E Marsden and Tudor~S Ratiu.
\newblock {\em Introduction to Mechanics and Symmetry: A Basic Exposition of
  Classical Mechanical Systems}.
\newblock Springer Publishing Company, Incorporated, 2010.

\bibitem{McL93}
Robert~I McLachlan.
\newblock {Explicit Lie-Poisson integration and the Euler equations}.
\newblock {\em Physical Review Letters}, 71:3043--3046, Nov 1993.

\bibitem{merigot16}
Quentin M{\'e}rigot and Jean-Marie Mirebeau.
\newblock Minimal geodesics along volume-preserving maps, through semidiscrete
  optimal transport.
\newblock {\em SIAM Journal on Numerical Analysis}, 54(6):3465--3492, 2016.

\bibitem{Mullen}
Patrick Mullen, Alexander McKenzie, Dmitry Pavlov, Luke Durant, Yiying Tong,
  Eva Kanso, Jerrold~E Marsden, and Mathieu Desbrun.
\newblock Discrete {L}ie advection of differential forms.
\newblock {\em Foundations of Computational Mathematics}, 11(2):131--149, 2011.

\bibitem{Pavlov}
Dmitry Pavlov.
\newblock {\em {Structure-preserving discretization of incompressible fluids}}.
\newblock PhD thesis, Caltech, 2009.

\bibitem{Pavlov1}
Dmitry {Pavlov}, Patrick {Mullen}, Yiying {Tong}, Eva {Kanso}, Jerrold~E
  {Marsden}, and Mathieu {Desbrun}.
\newblock {Structure-Preserving Discretization of Incompressible Fluids}.
\newblock {\em Physica D: Nonlinear Phenomena}, 240:443--458, 2009.

\bibitem{Rathgeber2016}
Florian Rathgeber, David~A Ham, Lawrence Mitchell, Michael Lange, Fabio
  Luporini, Andrew T~T Mcrae, Gheorghe-Teodor Bercea, Graham~R Markall, and
  Paul H~J Kelly.
\newblock Firedrake: Automating the finite element method by composing
  abstractions.
\newblock {\em ACM Transactions on Mathematical Software}, 43(3):24:1--24:27,
  December 2016.

\bibitem{Zeitlin}
Vladimir Zeitlin.
\newblock {Algebraization of 2D ideal fluid hydrodynamical systems and their
  finite-mode analogs}.
\newblock In {\em {Proceedings of the 4th International Workshop on Nonlinear
  and Turbulent Processes in Physics}}, volume~1 of {\em {Nonlinear World}},
  pages 717--718, Singapore, 1989. World Scientific.

\end{thebibliography}
\bibliographystyle{plain}
 
\end{document}